\def\le{\leqslant}
\def\ge{\geqslant}
\theoremstyle{theorem}
\newtheorem{theorem}{Theorem}[section]
\newtheorem{lemma}[theorem]{Lemma}
\newtheorem{condition}[theorem]{Condition}
\newtheorem{remark}[theorem]{Remark}
\newtheorem{corollary}[theorem]{Corollary}
\numberwithin{equation}{section}
\theoremstyle{plain}
\newtoks\thehProclaim
\newtheorem*{Proclaim}{\the\thehProclaim}
\begin{document}

\title[Homogenization of nonstationary periodic Maxwell system]
{Homogenization of nonstationary \\ periodic Maxwell system\\ in the case of constant permeability}

\author{M.~A.~Dorodnyi, T.~A.~Suslina}

\address{St.~Petersburg State University
\\
Universitetskaya nab. 7/9
\\
St.~Petersburg, 199034, Russia}

\email{mdorodni@yandex.ru}

\email{t.suslina@spbu.ru}


\keywords{Periodic differential operators, homogenization, 
operator error estimates, nonstationary Maxwell system}

\thanks{Mathematics Subject Classification (2010): MSC 35B27}

\thanks{Supported by Russian Science Foundation (project 17-11-01069).}

\begin{abstract}
In $L_2({\mathbb R}^3;{\mathbb C}^3)$, we consider a selfadjoint operator ${\mathcal L}_\varepsilon$,
$\varepsilon >0$, given by the differential expression 
$\mu_0^{-1/2}\operatorname{curl} \eta(\mathbf{x}/\varepsilon)^{-1} \operatorname{curl} \mu_0^{-1/2}
- \mu_0^{1/2}\nabla \nu(\mathbf{x}/\varepsilon) \operatorname{div} \mu_0^{1/2}$, where 
$\mu_0$ is a constant positive matrix,
a matrix-valued function $\eta(\mathbf{x})$  and a real-valued function $\nu(\mathbf{x})$ are periodic with respect to some lattice, positive definite and bounded.
We study the behavior of the operator-valued functions  $\cos (\tau {\mathcal L}_\varepsilon^{1/2})$ and
${\mathcal L}_\varepsilon^{-1/2} \sin (\tau {\mathcal L}_\varepsilon^{1/2})$
for $\tau \in {\mathbb R}$ and small  $\varepsilon$. It is shown that these operators converge to the corresponding operator-valued functions of the operator ${\mathcal L}^0$ in the norm of operators acting from the Sobolev space $H^s$ (with a suitable $s$) to $L_2$. Here ${\mathcal L}^0$ is the effective operator with constant coefficients. 
Also, an approximation with corrector in the  $(H^s \to H^1)$-norm for the operator ${\mathcal L}_\varepsilon^{-1/2} \sin (\tau {\mathcal L}_\varepsilon^{1/2})$ is obtained.  We prove error estimates and study the sharpness of the results regarding the type of the operator norm and regarding the dependence of the estimates on $\tau$. The results are applied to homogenization of the Cauchy problem for the nonstationary Maxwell system in the case where the magnetic permeability is equal to $\mu_0$, and the dielectric permittivity is given by the matrix  $\eta(\mathbf{x}/\varepsilon)$.
\end{abstract}

\maketitle

\section*{Introduction}
\setcounter{section}{0}
\setcounter{equation}{0}

\subsection{Operator error estimates}
The paper concerns homogenization theory of periodic differential operators (DOs). First of all, 
we mention the books  \cite{BeLP, BaPa, ZhKO}.

In a series of papers \cite{BSu1,BSu2,BSu3} by Birman and Suslina, an operator-theoretic  (spectral) approach to homogenization problems was developed. In $L_2({\mathbb R}^d; {\mathbb C}^n)$, a wide class of 
matrix strongly elliptic second order DOs ${\mathcal A}_\varepsilon$ was studied. 
The operator ${\mathcal A}_\varepsilon$ is given by  
\begin{equation}
\label{A_eps}
{\mathcal A}_\varepsilon = b(\mathbf{D})^* g(\mathbf{x}/\varepsilon) b(\mathbf{D}),  \quad \varepsilon >0,
\end{equation}
 where $g(\mathbf{x})$ is a bounded and positive definite $(m\times m)$-matrix-valued function
 periodic with respect to some lattice \hbox{$\Gamma \subset {\mathbb R}^d$}, and $b(\mathbf{D}) = \sum_{l=1}^d b_l D_l$ is a first order DO. Here $b_l$ are constant $(m \times n)$-matrices. It is assumed that $m \ge n $ and the symbol  $b(\boldsymbol{\xi})$ has maximal rank.

In \cite{BSu1}, it was shown that the resolvent  $({\mathcal A}_\varepsilon +I)^{-1}$ converges in the  $(L_2 \to L_2)$-operator norm to the resolvent of the effective operator ${\mathcal A}^0$, and 
\begin{equation}
\label{est_A_eps}
 \bigl\| ({\mathcal A}_\varepsilon +I)^{-1} - ({\mathcal A}^0+I)^{-1} \bigr\|_{L_2(\mathbb{R}^d)\to L_2(\mathbb{R}^d)} \le C \varepsilon.
\end{equation}
The effective operator is given by ${\mathcal A}^0= b(\mathbf{D})^* g^0  b(\mathbf{D})$, where $g^0$ is a constant positive matrix called the \textit{effective} matrix. In \cite{Su1}, a similar result was obtained for the parabolic semigroup:
\begin{equation}
\label{parab_est_A_eps}
 \bigl\| e^{- \tau {\mathcal A}_\varepsilon} - e^{-\tau {\mathcal A}^0} \bigr\|_{L_2(\mathbb{R}^d)\to L_2(\mathbb{R}^d)} \le C(\tau) \varepsilon,\quad \tau >0.
\end{equation}
Estimates  \eqref{est_A_eps} and \eqref{parab_est_A_eps} are order-sharp. Such inequalities are called  \textit{operator error estimates} in homogenization theory.

A different approach to operator error estimates (the shift method) was developed by Zhikov and Pastukhova. In \cite{Zh2,ZhPas1,ZhPas2}, estimates of the form \eqref{est_A_eps}, \eqref{parab_est_A_eps} were obtained for the operators of acoustics and elasticity. Further results were discussed in a survey \cite{ZhPas3}.

The operator error estimates for the nonstationary Schr{\"o}dinger-type equations and hyperbolic equations were studied in  \cite{BSu4} and in the recent works  \cite{Su3, Su4, M1, M2, DSu1, DSu2, D, DSu4}. 
In operator terms, the behavior of the operator-valued functions  
 $e^{-i \tau {\mathcal A}_\varepsilon}$,  
$\cos (\tau {\mathcal A}_\varepsilon^{1/2})$, 
${\mathcal A}_\varepsilon^{-1/2} \sin (\tau {\mathcal A}_\varepsilon^{1/2})$, $\tau \in \mathbb{R}$, was investigated. It turned out that the nature of the results differs from the case of elliptic and parabolic equations: the type of the operator norm must be changed. 

Let us dwell on the hyperbolic case. In \cite{BSu4}, the following sharp order estimate was proved:
\begin{equation}
\label{est_cos_A_eps}
\bigl\| \cos (\tau {\mathcal A}_\varepsilon^{1/2})  - \cos (\tau ({\mathcal A}^0)^{1/2}) \bigr\|_{H^2(\mathbb{R}^d)\to L_2(\mathbb{R}^d)} \le C(1+  |\tau|) \varepsilon.
\end{equation}
A similar result for the operator ${\mathcal A}_\varepsilon^{-1/2} \sin (\tau {\mathcal A}_\varepsilon^{1/2})$
together with approximation in the energy norm was obtained in \cite{M1, M2}:
\begin{align}
\label{est_sin_A_eps}
\bigl\|  {\mathcal A}_\varepsilon^{-1/2} \sin (\tau {\mathcal A}_\varepsilon^{1/2})  - 
({\mathcal A}^0)^{-1/2} \sin (\tau ({\mathcal A}^0)^{1/2}) \bigr\|_{H^1(\mathbb{R}^d)\to L_2(\mathbb{R}^d)} \le 
C(1  + |\tau|) \varepsilon,
\\
\label{est_sin_A_eps2}
 \bigl\|  {\mathcal A}_\varepsilon^{-1/2} \sin (\tau {\mathcal A}_\varepsilon^{1/2})  - 
({\mathcal A}^0)^{-1/2} \sin (\tau ({\mathcal A}^0)^{1/2}) - \varepsilon K(\varepsilon;\tau) \bigr\|_{H^2(\mathbb{R}^d)\to H^1(\mathbb{R}^d)} \le C(1  + |\tau|) \varepsilon.
\end{align}
Here $K(\varepsilon;\tau)$ is the corresponding corrector.

In \cite{DSu1, DSu2, DSu4}, it was shown that in the general case the results 
\eqref{est_cos_A_eps}--\eqref{est_sin_A_eps2} are sharp both regarding the type of the operator norm and regarding the dependence of estimates on $\tau$ (it is impossible to replace $(1+|\tau|)$ on the right by  
$(1+|\tau|)^\alpha$ with $\alpha<1$). On the other hand, under some additional assumptions the results admit the following improvement:
\begin{gather}
\label{usilenie_est_cos_A_eps}
 \bigl\| \cos (\tau {\mathcal A}_\varepsilon^{1/2})  - \cos (\tau ({\mathcal A}^0)^{1/2}) \bigr\|_{H^{3/2}(\mathbb{R}^d)\to L_2(\mathbb{R}^d)} \le C(1+ |\tau|)^{1/2} \varepsilon,
\\
\label{usilenie_est_sin_A_eps}
 \bigl\| {\mathcal A}_\varepsilon^{-1/2} \sin (\tau {\mathcal A}_\varepsilon^{1/2})  - 
({\mathcal A}^0)^{-1/2} \sin (\tau ({\mathcal A}^0)^{1/2}) \bigr\|_{H^{1/2}(\mathbb{R}^d)\to L_2(\mathbb{R}^d)} 
\le C(1+ |\tau|)^{1/2} \varepsilon,
\\
\label{usilenie_est_sin_A_eps2}
 \bigl\| {\mathcal A}_\varepsilon^{-1/2} \sin (\tau {\mathcal A}_\varepsilon^{1/2})  - 
({\mathcal A}^0)^{-1/2} \sin (\tau ({\mathcal A}^0)^{1/2}) - \varepsilon K(\varepsilon;\tau) 
 \bigr\|_{H^{3/2}(\mathbb{R}^d)\to H^1(\mathbb{R}^d)} \le C(1+ |\tau|)^{1/2} \varepsilon.
\end{gather}
The additional assumptions are formulated in terms of the spectral characteristics of the operator 
${\mathcal A}= b(\mathbf{D})^* g(\mathbf{x}) b(\mathbf{D})$ at the bottom of the spectrum.
Similar results for the exponential $e^{-i \tau {\mathcal A}_\varepsilon}$ were previously
obtained in  \cite{Su3, Su4, D}.

\subsection{Main results}
In the present paper, we apply the results of  \cite{BSu4, M1,M2,DSu2, DSu4} to the \textit{model operator of electrodynamics} acting in $L_2({\mathbb R}^3;{\mathbb C}^3)$ and given by the expression 
\begin{equation}
\label{L_eps_intr}
{\mathcal L}_\varepsilon = \mu_0^{-1/2} \operatorname{curl} \eta(\mathbf{x}/\varepsilon)^{-1} \operatorname{curl} \mu_0^{-1/2} - \mu_0^{1/2} \nabla \nu(\mathbf{x}/\varepsilon) \operatorname{div} \mu_0^{1/2}, \quad \varepsilon >0.
\end{equation}
Here $\mu_0$ is a constant positive matrix,  $\eta(\mathbf{x})$ is a matrix-valued function, and $\nu(\mathbf{x})$ is a real-valued function. It is assumed that $\eta(\mathbf{x})$ and  $\nu(\mathbf{x})$ are periodic, bounded and positive definite.  The operator \eqref{L_eps_intr} is a particular case of the operator \eqref{A_eps} with $m=4$ and $n=3$. The specific feature is that  the operator ${\mathcal L}_\varepsilon$ is reduced by the orthogonal decomposition of $L_2({\mathbb R}^3;{\mathbb C}^3)$ into  the divergence-free and the gradient subspaces (the Weyl decomposition). We are mainly interested in the divergence-free part ${\mathcal L}_{J,\varepsilon}$ 
of the operator ${\mathcal L}_\varepsilon$. For ${\mathcal L}_{J,\varepsilon}$ we obtain estimates of the form  \eqref{est_cos_A_eps}--\eqref{est_sin_A_eps2}. We show that in the general case these results cannot be improved. On the other hand, under some additional assumptions we obtain estimates of the form  \eqref{usilenie_est_cos_A_eps}--\eqref{usilenie_est_sin_A_eps2}. Some examples of both situations are discussed.

The results are applied to homogenization of the Cauchy problem for the nonstationary Maxwell system in the case where the magnetic permeability is equal to  $\mu_0$ and the dielectric permittivity is given by the matrix  $\eta(\mathbf{x}/\varepsilon)$.

Some partial results in this direction were obtained in the previous paper \cite{DSu3} by the authors (in the case where $\mu_0 = {\mathbf 1}$).

The method is based on the scaling transformation, the Floquet--Bloch theory, and the analytic perturbation theory. An important role is played by the spectral characteristics of the operator ${\mathcal L}$ (given by  \eqref{L_eps_intr} with $\varepsilon=1$) at the bottom of the spectrum.
We also rely  on the papers \cite{Su2, BSu-FAA, Su-AA18} about homogenization of the stationary periodic Maxwell system. 

\subsection{Plan of the paper}
In \S 1, we introduce the operator $\mathcal L$ acting in $L_2({\mathbb R}^3; {\mathbb C}^3)$; describe 
its reduction by  the Weyl decomposition; describe the expansion of $\mathcal L$ in the direct integral of the operators ${\mathcal L}(\mathbf{k})$ acting in  $L_2({\Omega}; {\mathbb C}^3)$ (where $\Omega$ is the cell of the lattice $\Gamma$) and depending on the parameter ${\mathbf k} \in {\mathbb R}^3$ (the quasimomentum).  In \S 2, the effective characteristics of the operator $\mathcal L$ are introduced. In \S 3,  main results of the paper  on homogenization of the operators ${\mathcal L}_\varepsilon$ and ${\mathcal L}_{J,\varepsilon}$ are obtained.
In \S 4, we apply the results to homogenization of the solutions of the Cauchy problem for the nonstationary Maxwell system. 

\subsection{Notation}
Let $\mathfrak{H}$ and $\mathfrak{H}_*$ be complex separable Hilbert spaces. By $\Vert \cdot \Vert _{\mathfrak{H}}$ we denote the norm in $\mathfrak{H}$; the symbol $\Vert \cdot \Vert _{\mathfrak{H}\rightarrow \mathfrak{H}_*}$ denotes the norm of a linear continuous operator acting from $\mathfrak{H}$ to $\mathfrak{H}_*$.

The inner product and the norm in $\mathbb{C}^n$ are denoted by $\langle \cdot ,\cdot \rangle$ and $\vert \cdot \vert$, respectively, $\mathbf{1}_n = \mathbf{1}$ is the unit $(n\times n)$-matrix.
If $a$ is a matrix of size $n\times n$, then $\vert a\vert$ stands for the norm of $a$ viewed as 
an operator in $\mathbb{C}^n$.
We denote  $\mathbf{x}=(x_1,x_2,x_3)\in \mathbb{R}^3$, $iD_j=\partial /\partial x_j$, $j=1,2,3$, $\mathbf{D}=-i\nabla =(D_1,D_2,D_3)$.

The class $L_2$ of $\mathbb{C}^n$-valued functions in a domain $\mathcal{O}\subset \mathbb{R}^d$  is denoted by $L_2(\mathcal{O};\mathbb{C}^n)$. The Sobolev classes of  $\mathbb{C}^n$-valued functions in a domain  $\mathcal{O}$ are denoted by $H^s(\mathcal{O};\mathbb{C}^n)$. For $n=1$, we write simply $L_2(\mathcal{O})$, $H^s(\mathcal{O})$, but sometimes we use such simple notation also for the spaces of vector-valued or matrix-valued functions.

\subsection{Acknowledgement} M.~A.~Dorodnyi is a Young Russian Mathematics award winner and would like to thank its sponsors and jury.

\section{The model second order operator}
\label{Section Preliminaries}
\setcounter{section}{1}
\setcounter{equation}{0}

\subsection{Lattices. The Gelfand transformation}\label{Subsection Lattices}
Let $\Gamma$ be a lattice in $\mathbb{R}^3$ generated by the basis $\mathbf{a}_1, \mathbf{a}_2, \mathbf{a}_3$:
\begin{equation*}
\Gamma=\biggl\{ \mathbf{a}\in \mathbb{R}^3 \,:\  \mathbf{a}=\sum \limits _{j=1}^3 q_j \mathbf{a}_j,\  q _j\in\mathbb{Z}\biggr\}.
\end{equation*}
Let $\Omega$ be the elementary cell of the lattice $\Gamma$:
\begin{equation*}
\Omega = \biggl \{ \mathbf{x}\in\mathbb{R}^3 \ :\, \mathbf{x}=\sum \limits _{j=1}^3 \xi_j\mathbf{a}_j,\;
0 <\xi_j < 1
 \biggr \}.
\end{equation*}
The basis  $\mathbf{b}_1, \mathbf{b}_2,\mathbf{b}_3\in \mathbb{R}^3$ dual to  $\mathbf{a}_1, \mathbf{a}_2,\mathbf{a}_3$ is defined by the relations $\langle \mathbf{b}_j,\mathbf{a}_i\rangle =2\pi \delta _{ji}$. This basis generates the lattice $\widetilde{\Gamma}$ dual to ${\Gamma}$.  Let $\widetilde{\Omega}$ be the central Brillouin zone of the lattice $\widetilde{\Gamma}$:
\begin{equation*}
\widetilde{\Omega}=\bigl \{ \mathbf{k}\in\mathbb{R}^3:\ \vert \mathbf{k}\vert <\vert \mathbf{k}-\mathbf{b}\vert ,\ 0\neq \mathbf{b}\in \widetilde{\Gamma}\bigr\}.
\end{equation*}
Let $r_0$ be the radius of the ball inscribed in  $\mathrm{clos}\,\widetilde{\Omega}$, i.~e.,
$2r_0=\min_{0\ne {\mathbf b} \in \widetilde{\Gamma}} |{\mathbf b}|$.

For $\Gamma$-periodic measurable matrix-valued functions,  we use the following notation:
 $$
f^\varepsilon (\mathbf{x}):=f(\mathbf{x}/\varepsilon), \ \varepsilon >0;\quad
\overline{f}:=\vert \Omega\vert ^{-1}\int _\Omega f(\mathbf{x})\,d\mathbf{x},\quad
  \underline{f}:=\left(\vert \Omega\vert ^{-1}\int _\Omega f(\mathbf{x})^{-1}\,d\mathbf{x}\right)^{-1}.
$$
 In the definition of  $\overline{f}$ it is assumed that  $f\in L_{1,\mathrm{loc}}(\mathbb{R}^3)$, and in the definition of
 $\underline{f}$ it is assumed that  $f(\mathbf{x})$ is a square  nondegenerate matrix such that  $f^{-1}\in L_{1,\mathrm{loc}}(\mathbb{R}^3)$.

Let $\widetilde{H}^1(\Omega;\mathbb{C}^n)$ be the subspace of 
$H^1(\Omega;\mathbb{C}^n)$ consisting of functions whose  $\Gamma$-periodic extension to $\mathbb{R}^3$ belongs to $H^1_{\textnormal{loc}}(\mathbb{R}^3;\mathbb{C}^n)$.

Now, we introduce the  \textit{Gelfand transformation} $\mathcal{U}$. First,  $\mathcal{U}$ is defined on the Schwartz class by the following relation:
	\begin{equation*}
\begin{split}	
(\mathcal{U}\mathbf{f})(\mathbf{k}, \mathbf{x}) =
\widetilde{\mathbf{f}}(\mathbf{k}, \mathbf{x}) :=
|\widetilde{\Omega}|^{-1/2} \sum_{\mathbf{a} \in \Gamma} e^{-i \langle \mathbf{k}, \mathbf{x} + \mathbf{a} \rangle} \mathbf{f}(\mathbf{x} + \mathbf{a}),
\quad \mathbf{f} \in \mathcal{S}(\mathbb{R}^3; \mathbb{C}^3), \  \mathbf{x} \in \Omega, \  \mathbf{k} \in \widetilde{\Omega}.
\end{split}
	\end{equation*}
	Next, it is extended up to unitary transformation 
	\begin{equation*}
	\mathcal{U}:  L_2(\mathbb{R}^3; \mathbb{C}^3) \to \int_{\widetilde{\Omega}} \oplus L_2(\Omega; \mathbb{C}^3) \, d \mathbf{k} =: \mathcal{K}.
\end{equation*}
The relation ${\mathbf f} \in H^1(\mathbb{R}^3;\mathbb{C}^3)$ is equivalent to the fact that  $\widetilde{\mathbf{f}}(\mathbf{k}, \cdot) \in \widetilde{H}^1(\Omega;\mathbb{C}^3)$ for almost all \hbox{$\mathbf{k} \in \widetilde{\Omega}$}~and
$$
\int_{\widetilde{\Omega}} \int_\Omega \left( |(\mathbf{D} + \mathbf{k}) \widetilde{\mathbf{f}}(\mathbf{k}, \mathbf{x}) |^2 +
| \widetilde{\mathbf{f}}(\mathbf{k}, \mathbf{x}) |^2 \right) \, d\mathbf{x}\, d \mathbf{k} < \infty.
$$
Under the transform $\mathcal{U}$, the operator in $L_2(\mathbb{R}^3;\mathbb{C}^3)$ acting as  multiplication by a bounded periodic matrix-valued function 
 turns into multiplication by the same function on the  fibers of the direct integral $\mathcal K$.
Action of the first order DO $b(\mathbf{D})$ on
${\mathbf f} \in H^1(\mathbb{R}^3;\mathbb{C}^3)$ turns into  action of the operators 
 $b({\mathbf D}+{\mathbf k})$ on $\widetilde{\mathbf f}(\mathbf{k},\cdot) \in \widetilde{H}^1(\Omega;\mathbb{C}^3)$ on the fibers of the direct integral.

\subsection{The operator $\mathcal{L}$}\label{Subsection Operator L}
Let $\mu_0$ be a symmetric positive $(3 \times 3)$-matrix with real entries. 
Suppose that $\eta(\mathbf{x})$ is a symmetric $(3 \times 3)$-matrix-valued function in ${\mathbb R}^3$ with real entries
and  $\nu(\mathbf{x})$ is a real-valued function in ${\mathbb R}^3$. We assume that $\eta(\mathbf{x})$ and 
$\nu(\mathbf{x})$ are periodic with respect to the lattice $\Gamma$ and such that 
	\begin{align}
	\label{eta}
	\eta(\mathbf{x}) &> 0; \quad \eta, \eta^{-1} \in L_\infty;\\
	\label{nu}
	\nu(\mathbf{x}) &> 0; \quad \nu, \nu^{-1} \in L_\infty.
	\end{align}
In $L_2(\mathbb{R}^3; \mathbb{C}^3)$, we consider the operator $\mathcal{L}$ formally given by the differential expression 
	\begin{equation}
	\label{L}
	\mathcal{L} = \mu_0^{-1/2}\operatorname{curl} \eta(\mathbf{x})^{-1} \operatorname{curl} \mu_0^{-1/2} 
	- \mu_0^{1/2} \nabla \nu(\mathbf{x}) \operatorname{div} \mu_0^{1/2}.
	\end{equation}
	The operator~(\ref{L}) can be represented as  $\mathcal{L} = b(\mathbf{D})^* g(\mathbf{x}) b(\mathbf{D})$, where 
	\begin{equation*}
	b(\mathbf{D}) = \begin{pmatrix}
	- i \operatorname{curl} \mu_0^{-1/2} \\
	- i \operatorname{div} \mu_0^{1/2}
	\end{pmatrix}, \quad g(\mathbf{x}) = \begin{pmatrix}
	\eta(\mathbf{x})^{-1} & 0 \\
	0 & \nu(\mathbf{x})\end{pmatrix}.
	\end{equation*}
	The symbol $b(\boldsymbol{\xi})$ of the operator $b({\mathbf D})$ is given by 
	\begin{equation}
	\label{symbol}
	b(\boldsymbol{\xi}) = \begin{pmatrix} r(\boldsymbol{\xi}) \mu_0^{-1/2} \\ \boldsymbol{\xi}^t \mu_0^{1/2}\end{pmatrix},
	\quad 
	r(\boldsymbol{\xi}) =
	\begin{pmatrix}
	0 & -\xi_3 & \xi_2 \\
	\xi_3 & 0 & -\xi_1 \\
	-\xi_2 & \xi_1 & 0 
	\end{pmatrix}, \quad \boldsymbol{\xi}^t = \begin{pmatrix} \xi_1 & \xi_2 & \xi_3 \end{pmatrix}.
	\end{equation}
We have 
$$
\operatorname{rank} b(\boldsymbol{\xi}) =3, \quad 0 \ne \boldsymbol{\xi} \in {\mathbb R}^3.
$$
This condition is equivalent to the estimates
	\begin{equation}
	\label{DSu1}
\alpha_0 \mathbf{1}_3 \le b(\boldsymbol{\xi})^* b(\boldsymbol{\xi}) \le \alpha_1 \mathbf{1}_3,
\quad |\boldsymbol{\xi}| =1,
	\end{equation}
with some positive constants $\alpha_0, \alpha_1$. It is easily seen that \eqref{DSu1}
 holds with the constants
$$
\alpha_0 = \min \{ |\mu_0|^{-1}; |\mu_0^{-1}|^{-1}\}, \quad \alpha_1 = |\mu_0| + |\mu_0^{-1}|.
$$
From \eqref{eta} and \eqref{nu} it follows that  $g({\mathbf x})$ is positive definite and bounded. Obviously,
$$
\|g\|_{L_\infty} = \max \{ \|\eta^{-1}\|_{L_\infty}, \|\nu\|_{L_\infty}\}, \quad 
\|g^{-1} \|_{L_\infty} = \max \{ \|\eta\|_{L_\infty}, \|\nu^{-1}\|_{L_\infty}\}.
$$
The precise definition of the operator $\mathcal{L}$ is given in terms of the quadratic form 
	\begin{equation*}
	\begin{aligned}
	&\mathfrak{l}[\mathbf{u},\mathbf{u}] := 
	\int_{{\mathbb R}^3} \langle g({\mathbf x}) b({\mathbf D}) \mathbf{u}, b({\mathbf D}) \mathbf{u} \rangle \, d{\mathbf x}  \\
	&=\int_{\mathbb{R}^3} \left( \left\langle \eta(\mathbf{x})^{-1} \operatorname{curl} \mu_0^{-1/2} \mathbf{u}, \operatorname{curl} \mu_0^{-1/2} \mathbf{u} \right\rangle + \nu(\mathbf{x}) \bigl| \operatorname{div} \mu_0^{1/2} \mathbf{u} \bigr|^2\right) \, d\mathbf{x}, 
	\quad \mathbf{u} \in H^1(\mathbb{R}^3; \mathbb{C}^3).
	\end{aligned}
	\end{equation*}
	Under our assumptions, 
	\begin{equation}
\label{estimates}
\begin{aligned}
c_0 \| {\mathbf D} {\mathbf u} \|^2_{L_2({\mathbb R}^3)} \le	\mathfrak{l}[\mathbf{u},\mathbf{u}] \le c_1 \| {\mathbf D} {\mathbf u}\|^2_{L_2({\mathbb R}^3)},
\quad {\mathbf u} \in H^1({\mathbb R}^3;{\mathbb C}^3),
\\
c_0 = \alpha_0 \|g^{-1}\|^{-1}_{L_\infty},\quad c_1 = \alpha_1 \|g \|_{L_\infty}.
\end{aligned}
\end{equation}
Thus, the form $\mathfrak{l}[{\mathbf u},{\mathbf u}]$ is closed and nonnegative. 
By definition, $\mathcal L$ is a selfadjoint operator in $L_2({\mathbb R}^3;{\mathbb C}^3)$ generated by this form. 

So, the operator $\mathcal L$ is a particular case of the operator $\mathcal A$ (see Introduction),
 and we can apply general resuts for the class of operators  $\mathcal A$.

\subsection{The Weyl decomposition. Reduction of the operator $\mathcal{L}$}\label{Weyl}
In $L_2({\mathbb R}^3;{\mathbb C}^3)$, we introduce the \textquotedblleft gradient\textquotedblright \  subspace 
	\begin{equation*}
	G(\mu_0) := \left\lbrace \mathbf{u} = \mu_0^{1/2} \nabla \phi \colon\  \phi \in H_{\mathrm{loc}}^1 (\mathbb{R}^3), \ \nabla \phi \in L_2(\mathbb{R}^3; \mathbb{C}^3)   \right\rbrace.
	\end{equation*}
The	\textquotedblleft divergence-free\textquotedblright \ subspace $J(\mu_0)$ is defined as the orthogonal complement  to  $G(\mu_0)$.
So, we have the following Weyl decomposition 
	\begin{equation}
	\label{Weyl_decomp}
	L_2(\mathbb{R}^3; \mathbb{C}^3) = J(\mu_0) \oplus G(\mu_0).
	\end{equation}
The subspace $J(\mu_0)$ consists of the functions $\mathbf{u} \in L_2(\mathbb{R}^3; \mathbb{C}^3) $ satisfying \hbox{$\operatorname{div} \mu_0^{1/2}\mathbf{u} = 0$} (in the sense of distributions). 
By $\mathcal{P}(\mu_0)$ we denote the orthogonal projection onto $J(\mu_0)$.

\begin{remark}
\label{PJ}
It is easily seen that  {\rm (}see, e.~g., \cite[Chapter 7, Section 2.4]{BSu1}{\rm )}
for $s>0$ the operator $\mathcal{P}(\mu_0)$ restricted to $H^s(\mathbb{R}^3;\mathbb{C}^3)$
is the orthogonal projection of the space $H^s(\mathbb{R}^3;\mathbb{C}^3)$ onto the subspace  $J^s(\mu_0) := 
J(\mu_0) \cap H^s(\mathbb{R}^3;\mathbb{C}^3)$. The operator $I -\mathcal{P}(\mu_0)$ restricted to  $H^s(\mathbb{R}^3;\mathbb{C}^3)$ is the orthogonal projection of  
$H^s(\mathbb{R}^3;\mathbb{C}^3)$ onto the subspace 
$G^s(\mu_0) := G(\mu_0) \cap H^s(\mathbb{R}^3;\mathbb{C}^3)$.
\end{remark}

The operator \eqref{L} is reduced by the decomposition~\eqref{Weyl_decomp}: $\mathcal{L}= \mathcal{L}_J \oplus \mathcal{L}_G$.  The part $\mathcal{L}_J$ acting in  $J(\mu_0)$ is formally given by the differential expression 
$\mu_0^{-1/2}\operatorname{curl} \eta(\mathbf{x})^{-1} \operatorname{curl} \mu_0^{-1/2}$, 
and the part $\mathcal{L}_G$ acting in  $G(\mu_0)$
is given by $- \mu_0^{1/2} \nabla \nu(\mathbf{x}) \operatorname{div} \mu_0^{1/2}$.

\subsection{The operators $\mathcal{L}(\mathbf{k})$}
In $L_2 (\Omega; \mathbb{C}^3)$, we consider the operator
 $\mathcal{L}(\mathbf{k})$ depending on the parameter $\mathbf{k}\in {\mathbb R}^3$ (called the quasimomentum) 
 and formally given by 
	\begin{equation*}
	\mathcal{L}(\mathbf{k}) =  
	\mu_0^{-1/2} \operatorname{curl}_\mathbf{k} \eta(\mathbf{x})^{-1} \operatorname{curl}_\mathbf{k} \mu_0^{-1/2}
	- \mu_0^{1/2} \nabla_\mathbf{k} \nu(\mathbf{x}) \operatorname{div}_\mathbf{k} \mu_0^{1/2}
	\end{equation*}
	with periodic boundary conditions. Here
	\begin{equation*}
	\nabla_\mathbf{k} \phi := \nabla \phi + i \mathbf{k} \phi, \quad \operatorname{div}_\mathbf{k} \mathbf{f} := \operatorname{div} \mathbf{f} + i \, \mathbf{k} \cdot \mathbf{f}, \quad \operatorname{curl}_\mathbf{k} \mathbf{f} := \operatorname{curl} \mathbf{f} + i \, \mathbf{k} \times \mathbf{f}
	\end{equation*}
($\mathbf{k} \cdot \mathbf{f}$ is the inner product and $\mathbf{k} \times \mathbf{f}$ is the vector product).
Strictly speaking, $\mathcal{L}(\mathbf{k})$ is a selfadjoint operator in  $L_2 (\Omega; \mathbb{C}^3)$ generated by the closed nonnegative quadratic form 
	\begin{equation*}
\begin{aligned}
	\mathfrak{l}(\mathbf{k})[\mathbf{u}, \mathbf{u}] =
\int_{\Omega}  \left\langle \eta(\mathbf{x})^{-1} \operatorname{curl}_\mathbf{k} \mu_0^{-1/2}\mathbf{u}, \operatorname{curl}_\mathbf{k} \mu_0^{-1/2}\mathbf{u} \right\rangle \, d {\mathbf x} 
\\
+ \int_\Omega \nu(\mathbf{x}) \bigl| \operatorname{div}_\mathbf{k} \mu_0^{1/2}\mathbf{u} \bigr|^2 \, d\mathbf{x},
\quad \mathbf{u} \in \widetilde{H}^1(\Omega; \mathbb{C}^3).
\end{aligned}	
\end{equation*}
Using the Fourier series expansion for a function $\mathbf{u}$, it is easily seen that 
	\begin{equation}
	\label{l(k)_form_estimate}
\begin{split}
	c_0  \|(\mathbf{D} + \mathbf{k}) \mathbf{u}\|_{L_2 (\Omega)}^2 \le \mathfrak{l}(\mathbf{k})[\mathbf{u}, \mathbf{u}] \le c_1  \|(\mathbf{D} + \mathbf{k}) \mathbf{u}\|_{L_2 (\Omega)}^2, \quad
\mathbf{u} \in \widetilde{H}^1(\Omega; \mathbb{C}^3),
\end{split}	
\end{equation}
where the constants $c_0, c_1$ are the same as in  \eqref{estimates}.

	By the lower estimate~(\ref{l(k)_form_estimate}), 
	\begin{equation}
	\label{c*}
	\mathcal{L}(\mathbf{k}) \ge c_0 |\mathbf{k}|^2 I, \quad \mathbf{k} \in \widetilde{\Omega}.
	\end{equation}

\subsection{Reduction of the operators ${\mathcal L}(\mathbf{k})$}
In $L_2 (\Omega; \mathbb{C}^3)$, we define  the 
 \textquotedblleft gradient\textquotedblright \ subspace  (depending on the parameter $\mathbf{k} \in \mathbb{R}^3$)
	\begin{equation*}
	G(\mathbf{k};\mu_0) := \{\mathbf{u} = \mu_0^{1/2} \nabla_\mathbf{k} \phi \colon\  \phi \in \widetilde{H}^1(\Omega) \}.
	\end{equation*}
The \textquotedblleft divergence-free\textquotedblright \ subspace $J(\mathbf{k}; \mu_0)$ is defined as the orthogonal complement to $G(\mathbf{k};\mu_0)$:
	\begin{equation}
	\label{H_Weyl_decomp}
	 L_2 (\Omega; \mathbb{C}^3) = J(\mathbf{k};\mu_0) \oplus G(\mathbf{k}; \mu_0).
	\end{equation}
	The subspace $J(\mathbf{k}; \mu_0)$ consists of the functions $\mathbf{u} \in L_2 (\Omega; \mathbb{C}^3)$ satisfying  $\operatorname{div}_\mathbf{k} \mu_0^{1/2}\check{\mathbf{u}} = 0$ (in the sense of distributions), where $\check{\mathbf{u}}$ is the $\Gamma$-periodic extension of a function  
	$\mathbf{u}$ to ${\mathbb R}^3$. Let $\mathcal{P}(\mathbf{k};\mu_0)$ be the orthogonal projection onto 
	$J(\mathbf{k}; \mu_0)$.

The operator $\mathcal{L}(\mathbf{k})$ is reduced by  decomposition~\eqref{H_Weyl_decomp}.
 The part  $\mathcal{L}_J(\mathbf{k})$ acting in  $J(\mathbf{k};\mu_0)$ is formally given by the expression
$\mu_0^{-1/2}\operatorname{curl}_{\mathbf{k}} \eta(\mathbf{x})^{-1} \operatorname{curl}_{\mathbf{k}} \mu_0^{-1/2}$ (with periodic boundary conditions), and the part  $\mathcal{L}_G(\mathbf{k})$ acting in
$G(\mathbf{k}; \mu_0)$ is given by 
$- \mu_0^{1/2}\nabla_{\mathbf{k}} \nu(\mathbf{x}) \operatorname{div}_{\mathbf{k}} \mu_0^{1/2}$.

\subsection{Direct integral expansion for the operator $\mathcal{L}$}
	Under the Gelfand transformation $\mathcal{U}$, the operator $\mathcal{L}$ expands in the direct integral of the operators  $\mathcal{L} (\mathbf{k})$:
	\begin{equation*}
	\mathcal{U} \mathcal{L}  \mathcal{U}^{-1} = \int_{\widetilde \Omega} \oplus \mathcal{L} (\mathbf{k}) \, d \mathbf{k}.
	\end{equation*}
This means the following. Let  $\mathbf{v} \in H^1(\mathbb{R}^3; \mathbb{C}^3)$. Then 
	\begin{align}
	\label{Gelfand_indetail_1}
	\widetilde{\mathbf{v}}(\mathbf{k}, \cdot) &\in \widetilde{H}^1(\Omega; \mathbb{C}^3) \quad \text{for almost all} \; \mathbf{k} \in \widetilde \Omega, \\
	\label{Gelfand_indetail_2}
	\mathfrak{l}[\mathbf{v}, \mathbf{v}] &= \int_{\widetilde{\Omega}} \mathfrak{l}(\mathbf{k}) [\widetilde{\mathbf{v}}(\mathbf{k}, \cdot), \widetilde{\mathbf{v}}(\mathbf{k}, \cdot)] \, d \mathbf{k} .
	\end{align}
	Conversely, if  $\widetilde{\mathbf{v}} \in \mathcal{K}$ satisfies~\eqref{Gelfand_indetail_1} and the
	 integral in~\eqref{Gelfand_indetail_2} is finite, then $\mathbf{v} \in H^1(\mathbb{R}^3; \mathbb{C}^3)$  
	 and~\eqref{Gelfand_indetail_2} holds.

Under the Gelfand transform, the orthogonal projection $\mathcal{P}(\mu_0)$ expands in the direct integral of the orthogonal projections $\mathcal{P}(\mathbf{k}; \mu_0)$; see \cite{Su2}. Therefore, the operator $\mathcal{L} \mathcal{P}(\mu_0) = \mathcal{L}_J \oplus \mathbf{0}_{G(\mu_0)}$ expands in the direct integral of the operators $\mathcal{L}(\mathbf{k}) \mathcal{P}(\mathbf{k};\mu_0) = \mathcal{L}_J (\mathbf{k}) \oplus \mathbf{0}_{G(\mathbf{k};\mu_0)}$:
	\begin{equation*}
	\mathcal{U} \mathcal{L} \mathcal{P}(\mu_0) \mathcal{U}^{-1} = \int_{\widetilde{\Omega}} \oplus \mathcal{L}(\mathbf{k}) \mathcal{P}(\mathbf{k};\mu_0) \, d \mathbf{k}.
	\end{equation*}

\section{Effective characteristics}

\subsection{The analytic branches of eigenvalues and eigenvectors}
According to \cite{BSu1}, we put 
$$
\mathbf{k} = t \boldsymbol{\theta},\quad t= |\mathbf{k}|,\quad \boldsymbol{\theta}\in {\mathbb S}^2,
$$
and denote $\mathcal{L} (\mathbf{k})= \mathcal{L}(t \boldsymbol{\theta})=: L(t ;\boldsymbol{\theta})$.
The operator family $L(t ;\boldsymbol{\theta})$ depends on the onedimensional parameter $t$ analytically 
and  has discrete spectrum (since $\mathcal{L} (\mathbf{k})$ is an elliptic operator in a bounded domain).
We can apply analytic perturbation theory (see \cite{K}).
For $t=0$ the \textquotedblleft unperturbed\textquotedblright \  operator $\mathcal{L}(0)$ has an  isolated threemultiple eigenvalue $\lambda_0=0$.
The corresponding eigenspace consists of constant vector-valued functions:
\begin{equation}
\label{frakN}
\mathfrak{N} := \operatorname{Ker} \mathcal{L} (0) =  \left\lbrace \mathbf{u} \in L_2(\Omega;\mathbb{C}^3)
\colon\  \mathbf{u} = \mathbf{c} \in \mathbb{C}^3 \right\rbrace.
\end{equation}
Let $P$ be the orthogonal projection of $L_2(\Omega;\mathbb{C}^3)$ onto the subspace $\mathfrak{N}$:
\begin{equation*}
P \mathbf{u} = |\Omega|^{-1} \int_\Omega \mathbf{u}(\mathbf{x}) \, d\mathbf{x}.
\end{equation*}

We put 
\begin{equation*}
\delta:= \frac{r_0^2}{4} \alpha_0 \| g^{-1}\|^{-1}_{L_\infty},
\quad
t^0 := \frac{r_0}{2} \alpha_0^{1/2} \alpha_1^{-1/2} \|g\|^{-1/2}_{L_\infty} \| g^{-1}\|^{-1/2}_{L_\infty}.
\end{equation*}
As was shown in \cite{BSu1}, for $t\le t^0$ the operator $L(t ;\boldsymbol{\theta})$ has exactly three eigenvalues 
 (counted with multiplicities) 
 $\lambda_l(t ;\boldsymbol{\theta})$, $l=1,2,3,$ belonging to the interval $[0,\delta]$,
while the interval $(\delta, 3\delta)$ is free of the spectrum. By 
${\mathfrak F}(\mathbf{k})={\mathfrak F}(t ;\boldsymbol{\theta})$ we denote the eigenspace of the operator 
$L(t ;\boldsymbol{\theta})$ for the interval $[0,\delta]$.

According to the analytic perturbation theory,  for $t \le t^0$ the eigenvalues 
$\lambda_l(t ;\boldsymbol{\theta})$, $l=1,2,3,$ can be enumerated in such a way  that they 
are real-analytic functions of $t$
(for each fixed $\boldsymbol{\theta} \in \mathbb{S}^2$) and the corresponding  eigenvectors 
$\boldsymbol{\varphi}_l(t;\boldsymbol{\theta})$, $l = 1,2,3$, orthonormal in $L_2(\Omega;\mathbb{C}^3)$ 
are real-analytic in  $t$. Thus,
\begin{equation*}
L(t;\boldsymbol{\theta}) \boldsymbol{\varphi}_l(t;\boldsymbol{\theta}) = \lambda_l(t; \boldsymbol{\theta}) \boldsymbol{\varphi}_l(t;\boldsymbol{\theta}), \quad l = 1,2,3, \quad 0 \le t \le t^0,
\end{equation*}
and the set $\boldsymbol{\varphi}_l(t;\boldsymbol{\theta})$, $l = 1,2,3$, forms an orthonormal basis in the subspace 
${\mathfrak F}(t ;\boldsymbol{\theta})$.
For sufficiently small  $0< t_*=t_*(\boldsymbol{\theta}) \le t^0$ and $t \le t_*(\boldsymbol{\theta})$, we have 
the following convergent power series expansions 
\begin{align}
\label{eigenvalues_series}
\lambda_l(t; \boldsymbol{\theta}) &=  \gamma_l(\boldsymbol{\theta})t^2 +  \mu_l(\boldsymbol{\theta})t^3 + \ldots, \qquad  l = 1,2,3, \\
\label{eigenvectors_series}
\boldsymbol{\varphi}_l(t;\boldsymbol{\theta}) &= \boldsymbol{\omega}_l(\boldsymbol{\theta}) + t \boldsymbol{\psi}_l(\boldsymbol{\theta}) + \ldots, \qquad l = 1,2,3.
\end{align}
The vectors $\boldsymbol{\omega}_l(\boldsymbol{\theta})$, $l = 1,2,3$, form an orthonormal basis 
in the subspace $\mathfrak{N}$.
By \eqref{c*},  \hbox{$\gamma_l(\boldsymbol{\theta}) \ge c_0 >0$}; in general, the coefficients $\mu_l(\boldsymbol{\theta}) \in \mathbb{R}$ may be nonzero.
The coefficients of the power series expansions \eqref{eigenvalues_series}, \eqref{eigenvectors_series} are called the  \textit{threshold characteristics} of the operator $\mathcal{L}$ at the bottom of the spectrum.

\subsection{The spectral germ. The effective matrix\label{sec_effective}}
The key role is played by the \textit{spectral germ} $S (\boldsymbol{\theta})$ of the operator  
$L(t; \boldsymbol{\theta})$; see \cite{BSu1}.
Let us give the spectral definition of the germ: $S (\boldsymbol{\theta})$  \textit{is a selfadjoint operator 
in the space $\mathfrak{N}$ such that the numbers $\gamma_l(\boldsymbol{\theta})$ and the elements 
$\boldsymbol{\omega}_l(\boldsymbol{\theta})$ are its eigenvalues and eigenvectors}:
\begin{equation*}
S(\boldsymbol{\theta})\boldsymbol{\omega}_l(\boldsymbol{\theta})=\gamma_l(\boldsymbol{\theta})
\boldsymbol{\omega}_l(\boldsymbol{\theta}),\quad l=1,2,3.
\end{equation*}
In  \cite{BSu1},  the following invariant representation for the germ was obtained:
\begin{equation}
\label{germ}
S (\boldsymbol{\theta}) = b(\boldsymbol{\theta})^* g^0 b(\boldsymbol{\theta}), \quad  \boldsymbol{\theta} \in \mathbb{S}^{2},
\end{equation}
where $b(\boldsymbol{\theta})$~is the symbol of the operator $b(\mathbf{D})$, and $g^0$~is the so called effective matrix. The constant positive  ($4 \times 4$)-matrix $g^0$ is defined as follows. Let $\Lambda \in \widetilde{H}^1 (\Omega)$~be the ($3 \times 4$)-matrix-valued function which is the $\Gamma$-periodic solution of the problem 
\begin{equation}
\label{equation_for_Lambda}
b(\mathbf{D})^* g(\mathbf{x}) (b(\mathbf{D}) \Lambda (\mathbf{x}) + \mathbf{1}_4) = 0, \quad \int_{\Omega} \Lambda (\mathbf{x}) \, d \mathbf{x} = 0.
\end{equation}
The effective matrix $g^0$ is defined in terms of the matrix $\Lambda (\mathbf{x})$:
\begin{gather}
\label{g_tilde}
\widetilde{g} (\mathbf{x}) := g(\mathbf{x})( b(\mathbf{D}) \Lambda (\mathbf{x}) + \mathbf{1}_4), \\
\label{g0}
g^0 = | \Omega |^{-1} \int_{\Omega} \widetilde{g} (\mathbf{x}) \, d \mathbf{x}.
\end{gather}
It turns out that  the matrix $g^0$ is positive definite.

The  effective characteristics for the operator 
$L(t; \boldsymbol{\theta})$ were calculated in \cite{BSu-FAA} and \cite{Su-AA18}.

First, we introduce the effective matrix $\eta^0$~for the scalar elliptic operator $- \operatorname{div} \eta(\mathbf{x}) \nabla = \mathbf{D}^* \eta(\mathbf{x}) \mathbf{D}$.
Recall the definition of  $\eta^0$. Let $\mathbf{e}_1$, $\mathbf{e}_2$, $\mathbf{e}_3$ be the standard othonormal basis in $\mathbb{R}^3$. Let $\Phi_j(\mathbf{x})$ be the $\Gamma$-periodic solution of the problem
\begin{equation}
\label{2.8a}
\operatorname{div} \eta(\mathbf{x}) (\nabla \Phi_j(\mathbf{x}) + \mathbf{e}_j)=0,
\quad \int_{\Omega} \Phi_j (\mathbf{x}) \, d \mathbf{x} = 0.
\end{equation}
Consider the matrix $\Sigma_{\circ}({\mathbf x})$ with the columns $\nabla \Phi_j({\mathbf x})$, $j=1,2,3$. We put  
$$
\widetilde{\eta} (\mathbf{x}):= \eta({\mathbf x}) (\Sigma_\circ({\mathbf x}) + {\mathbf 1}_3).
$$
 Then 
$$
\eta^0 = | \Omega |^{-1} \int_{\Omega} \widetilde{\eta} (\mathbf{x}) \, d \mathbf{x}.
$$

\begin{remark}
\label{eta0_properties}
Note that the matrix $\eta^0$ has the following properties\emph{:}

\noindent $1^\circ$. We have $\underline{\eta} \le \eta^0 \le \overline{\eta}$ {\rm (}these estimates are known as the Voigt--Reuss bracketing{\rm )}.
It follows that  $|\eta^0|\le \|\eta\|_{L_\infty}$, $|(\eta^0)^{-1}| \le \|\eta^{-1}\|_{L_\infty}$.

\noindent $2^\circ$. The identity $\eta^0 = \overline{\eta}$ is equivalent to the fact that the columns 
$\boldsymbol{\eta}_j ({\mathbf x})$, $j=1,2,3$, of the matrix $\eta({\mathbf x})$ are divergence-free{\rm :} $\operatorname{div}\, \boldsymbol{\eta}_j ({\mathbf x})=0$.
In this case, the solution of problem \eqref{2.8a} is trivial{\rm :} \hbox{$\Phi_j(\mathbf{x})=0$}, $j=1,2,3$.

\noindent $3^\circ$. The identity $\eta^0 = \underline{\eta}$ is equivalent to the 
 fact that the columns 
$\mathbf{l}_j ({\mathbf x})$, $j=1,2,3$, of the matrix 
 $\eta({\mathbf x})^{-1}$ can be represented as  
 $\mathbf{l}_j ({\mathbf x}) = \nabla \phi_j(\mathbf{x}) + \mathbf{l}_j^0$
for some  $\phi_j \in \widetilde{H}^1(\Omega)$ and $\mathbf{l}_j^0 \in \mathbb{R}^3$.
 In this case we have  $\widetilde{\eta}(\mathbf{x}) = \eta^0 = \underline{\eta}$.
\end{remark}

We put  ${\mathbf c}_j= (\eta^0)^{-1} {\mathbf e}_j$, $j=1,2,3$.
Let $\widetilde{\Phi}_j(\mathbf{x})$ be the $\Gamma$-periodic solution of the problem 
\begin{equation}
\label{2.8aaa}
\operatorname{div} \eta(\mathbf{x}) (\nabla \widetilde{\Phi}_j(\mathbf{x}) + \mathbf{c}_j)=0,
\quad \int_{\Omega} \widetilde{\Phi}_j (\mathbf{x}) \, d \mathbf{x} = 0.
\end{equation}
Let ${\mathbf p}_j \in \widetilde{H}^1(\Omega;{\mathbb C}^3)$ (where $j=1,2,3$) be the $\Gamma$-periodic solution of the problem
$$
\begin{aligned}
\operatorname{curl} (\mu_0^{-1} \operatorname{curl} {\mathbf p}_j({\mathbf x})) = {\eta}({\mathbf x}) (\nabla \widetilde{\Phi}_j({\mathbf x}) + {\mathbf c}_j)  -{\mathbf e}_j,
\\
 \operatorname{div} {\mathbf p}_j( {\mathbf x}) =0, \quad \int_\Omega {\mathbf p}_j({\mathbf x}) \,d {\mathbf x} =0.
\end{aligned}
$$
Let $\rho \in \widetilde{H}^1(\Omega)$ be the $\Gamma$-periodic solution of the problem 
$$
- \operatorname{div} (\mu_0 \nabla \rho({\mathbf x})) = 1 - \underline{\nu} \, \nu({\mathbf x})^{-1}, \quad \int_\Omega \rho({\mathbf x}) \,d {\mathbf x} =0. 
$$
Then the $(3 \times 4)$-matrix  $\Lambda({\mathbf x})$ takes the form
\begin{equation*}
\Lambda({\mathbf x}) = i \begin{pmatrix} 
\mu_0^{-1/2} \Psi({\mathbf x}) & \mu_0^{1/2} \nabla \rho({\mathbf x})
\end{pmatrix},
\end{equation*}
where $\Psi({\mathbf x})$ is the $(3 \times 3)$-matrix with the columns $\operatorname{curl} {\mathbf p}_j({\mathbf x})$, $j=1,2,3$. 
 
Next, the matrix $\widetilde{g} ({\mathbf x}) = g({\mathbf x}) (b({\mathbf D}) \Lambda({\mathbf x}) + {\mathbf 1}_4)$ is given by  
$$
\widetilde{g}({\mathbf x}) = \begin{pmatrix}  (\eta^0)^{-1} + \Sigma({\mathbf x}) & 0 \\ 0 & \underline{\nu} \end{pmatrix},
$$
where $\Sigma({\mathbf x})$ is the matrix with the columns $\nabla \widetilde{\Phi}_j({\mathbf x})$, $j=1,2,3$. Note that 
$\Sigma({\mathbf x}) = \Sigma_\circ({\mathbf x}) (\eta^0)^{-1}$.

According to \eqref{g0}, we obtain
\begin{equation}
\label{g00}
g^0 = \begin{pmatrix}
(\eta^0)^{-1} & 0 \\
0 & \underline{\nu}
\end{pmatrix}.
\end{equation}

By \eqref{germ} and \eqref{g00}, the germ $S(\boldsymbol{\theta})$ can be written as 
\begin{equation}
\label{S_decomp}
S(\boldsymbol{\theta}) = \mu_0^{-1/2} r(\boldsymbol{\theta})^t (\eta^0)^{-1} r(\boldsymbol{\theta}) \mu_0^{-1/2} + \underline{\nu} \, \mu_0 ^{1/2} \boldsymbol{\theta} \boldsymbol{\theta}^t \mu_0^{1/2},
\end{equation}
where the symbol $r(\boldsymbol{\theta})$ is defined by \eqref{symbol}.

\subsection{Decomposition of the spectral germ}
Consider the following orthogonal decomposition of the threedimensional space~(\ref{frakN}) depending on the parameter \hbox{$\boldsymbol{\theta} \in \mathbb{S}^2$}:
\begin{equation}
\label{N_Weyl_decomp}
\mathfrak{N} = J_{\boldsymbol{\theta}}^0 \oplus G_{\boldsymbol{\theta}}^0,
\end{equation}
where 
\begin{gather*}
J_{\boldsymbol{\theta}}^0 = \{ \mu_0^{1/2} \mathbf{c} \in \mathbb{C}^3 \colon \langle \mu_0 \mathbf{c},  \boldsymbol{\theta} \rangle =0 \}, \\
G_{\boldsymbol{\theta}}^0 = \{ \mathbf{c} = \alpha \mu_0^{1/2}\boldsymbol{\theta} \colon \alpha \in \mathbb{C} \}.
\end{gather*}

Obviously, the operator $S(\boldsymbol{\theta})$ is reduced by  decomposition~(\ref{N_Weyl_decomp}). 
 The part $S_J(\boldsymbol{\theta})$ of $S(\boldsymbol{\theta})$ in $J_{\boldsymbol{\theta}}^0$ corresponds to the first term in  (\ref{S_decomp}), and the part $S_G(\boldsymbol{\theta})$ of  $S(\boldsymbol{\theta})$ in  $G_{\boldsymbol{\theta}}^0$ corresponds to the second term. 
 The operator $S(\boldsymbol{\theta})$ has unique eigenvalue in the subspace $G_{\boldsymbol{\theta}}^0$:
 \begin{equation}
 \label{gamma3}
\gamma_3 (\boldsymbol{\theta}) = \underline{\nu} \langle \mu_0 \boldsymbol{\theta}, \boldsymbol{\theta} \rangle.
\end{equation} 
 The corresponding normed eigenvector is given by 
 \begin{equation}
 \label{omega3}
 \boldsymbol{\omega}_3(\boldsymbol{\theta}) = |\Omega|^{-1/2} \langle \mu_0 \boldsymbol{\theta}, \boldsymbol{\theta}\rangle^{-1/2}  \mu_0^{1/2}  \boldsymbol{\theta}.
 \end{equation}

In the subspace  $J_{\boldsymbol{\theta}}^0$~the germ has two eigenvalues $\gamma_1 (\boldsymbol{\theta})$ and $\gamma_2 (\boldsymbol{\theta})$ corresponding to the algebraic problem 
\begin{equation}
\label{solenoid_germ_spec_probl}
r(\boldsymbol{\theta})^t (\eta^0)^{-1} r(\boldsymbol{\theta}) \mathbf{c} = \gamma \mu_0 
\mathbf{c}, \quad \mu_0 \mathbf{c} \perp \boldsymbol{\theta}.
\end{equation}

We have the following  simple estimates 
\begin{equation}
\label{gammaj_est1}
\begin{aligned}
&\gamma_j(\boldsymbol{\theta}) \le |\mu_0^{-1}| | (\eta^0)^{-1}| \le |\mu_0^{-1}| \| \eta^{-1}\|_{L_\infty},
\quad \boldsymbol{\theta} \in \mathbb{S}^2, \quad j=1,2;
\\
&\gamma_3(\boldsymbol{\theta}) \ge \underline{\nu} |\mu_0^{-1}|^{-1},\quad \boldsymbol{\theta} \in \mathbb{S}^2.
\end{aligned}
\end{equation}

\begin{remark}
		As was mentioned in \cite[Remark 4.5]{Su2}, we can always choose the analytic branches of eigenvalues and eigenvectors of the operator $L(t; \boldsymbol{\theta})$, $t \in [0, t^0]$, in such a way that one of the eigenvectors \emph{(}let it be $\boldsymbol{\varphi}_3 (t; \boldsymbol{\theta})$\emph{)} belongs to the  \textquotedblleft gradient\textquotedblright \ subspace $G(t\boldsymbol{\theta}; \mu_0)$ for \hbox{$t \ne 0$}, and then  \emph{(}automatically\emph{)} the other two eigenvectors $\boldsymbol{\varphi}_1 (t; \boldsymbol{\theta})$, $\boldsymbol{\varphi}_2 (t; \boldsymbol{\theta})$ belong to the \textquotedblleft divergence-free\textquotedblright \ subspace $J(t \boldsymbol{\theta};\mu_0)$. The coefficient $\gamma_3(\boldsymbol{\theta})$ in 
		expansion~\emph{(\ref{eigenvalues_series})} for $\lambda_3 (t; \boldsymbol{\theta})$ is the eigenvalue of the part of the germ $S(\boldsymbol{\theta})$ in the subspace $G_{\boldsymbol{\theta}}^0$.
The \textquotedblleft embryo\textquotedblright \ $\boldsymbol{\omega}_3(\boldsymbol{\theta})$ in 
 expansion~\emph{(\ref{eigenvectors_series})} for $\boldsymbol{\varphi}_3 (t; \boldsymbol{\theta})$ is given by  
\eqref{omega3}.
The coefficients $\gamma_1(\boldsymbol{\theta})$ and $\gamma_2(\boldsymbol{\theta})$ in  
expansions~\emph{(\ref{eigenvalues_series})} for $\lambda_1 (t; \boldsymbol{\theta})$, $\lambda_2 (t; \boldsymbol{\theta})$ are eigenvalues of 
 $S_J(\boldsymbol{\theta})$  and correspond to the algebraic problem~\emph{(\ref{solenoid_germ_spec_probl})}. The \textquotedblleft embryos\textquotedblright \ $\boldsymbol{\omega}_1 (\boldsymbol{\theta})$ and 
$\boldsymbol{\omega}_2 (\boldsymbol{\theta})$ in  expansions~\emph{(\ref{eigenvectors_series})} for $\boldsymbol{\varphi}_1 (t; \boldsymbol{\theta})$ and  $\boldsymbol{\varphi}_2 (t; \boldsymbol{\theta})$ belong to  $J_{\boldsymbol{\theta}}^0$ and are the eigenvectors of  problem~\eqref{solenoid_germ_spec_probl}.
		If $\gamma_1(\boldsymbol{\theta}) \ne \gamma_2(\boldsymbol{\theta})$, then $\boldsymbol{\omega}_1 (\boldsymbol{\theta})$ and $\boldsymbol{\omega}_2 (\boldsymbol{\theta})$ are defined uniquely  \emph{(}up to phase factors\emph{)}.
		For $t=0$ all three eigenvectors belong to the  \textquotedblleft divergence-free\textquotedblright \ subspace $J(0;\mu_0)$: $\boldsymbol{\varphi}_l (0;\boldsymbol{\theta}) = \boldsymbol{\omega}_l(\boldsymbol{\theta}) \in \mathfrak{N}$, $l = 1,2,3$. Note also that, if  $\gamma_1(\boldsymbol{\theta}) = \gamma_2(\boldsymbol{\theta})$, then the knowledge of the germ $S(\boldsymbol{\theta})$ is not sufficient to determine the \textquotedblleft embryos\textquotedblright \ $\boldsymbol{\omega}_1 (\boldsymbol{\theta})$, $\boldsymbol{\omega}_2 (\boldsymbol{\theta})$.
\end{remark}

\subsection{The operator $N(\boldsymbol{\theta})$}
We also need the operator $N(\boldsymbol{\theta})$ acting in the space $\mathfrak{N}$ and defined in terms of the coefficients of the power series expansions \eqref{eigenvalues_series}, \eqref{eigenvectors_series} as follows:
\begin{align}
\nonumber
N(\boldsymbol{\theta}) &= N_0(\boldsymbol{\theta}) + N_*(\boldsymbol{\theta}),
\\
\label{N0}
N_0(\boldsymbol{\theta}) &= \sum_{l=1}^3 \mu_l(\boldsymbol{\theta}) (\cdot,\boldsymbol{\omega}_l(\boldsymbol{\theta}) )_{L_2(\Omega)}\boldsymbol{\omega}_l(\boldsymbol{\theta}),
\\
\nonumber
N_*(\boldsymbol{\theta})
&= \sum_{l=1}^3 \gamma_l(\boldsymbol{\theta}) \left((\cdot,P \boldsymbol{\psi}_l(\boldsymbol{\theta}) )_{L_2(\Omega)}
\boldsymbol{\omega}_l(\boldsymbol{\theta})  + (\cdot,\boldsymbol{\omega}_l(\boldsymbol{\theta}))_{L_2(\Omega)}   P \boldsymbol{\psi}_l(\boldsymbol{\theta})  \right).
\end{align}
For more details, see \cite{BSu2}.

\begin{remark}
\label{rem_N}
In the basis $\{\boldsymbol{\omega}_l(\boldsymbol{\theta})\}_{l=1}^3$, the operator $N_0(\boldsymbol{\theta})$ is diagonal, while the diagonal entries of   $N_*(\boldsymbol{\theta})$ are equal to zero. We have
\begin{align}
\label{2.13a}
(N(\boldsymbol{\theta})\boldsymbol{\omega}_l(\boldsymbol{\theta}),\boldsymbol{\omega}_l(\boldsymbol{\theta}))_{L_2(\Omega)}&=
(N_0(\boldsymbol{\theta})\boldsymbol{\omega}_l(\boldsymbol{\theta}),\boldsymbol{\omega}_l(\boldsymbol{\theta}))_{L_2(\Omega)}
=\mu_l(\boldsymbol{\theta}), \quad l=1,2,3,
\\
\nonumber 
\begin{split}
(N(\boldsymbol{\theta})\boldsymbol{\omega}_l(\boldsymbol{\theta}),\boldsymbol{\omega}_j(\boldsymbol{\theta}))_{L_2(\Omega)}&=
(N_*(\boldsymbol{\theta})\boldsymbol{\omega}_l(\boldsymbol{\theta}),\boldsymbol{\omega}_j(\boldsymbol{\theta}))_{L_2(\Omega)}
\\
&=(\gamma_l(\boldsymbol{\theta}) - \gamma_j(\boldsymbol{\theta})) (P \boldsymbol{\psi}_l(\boldsymbol{\theta}),
\boldsymbol{\omega}_j(\boldsymbol{\theta})), \quad l\ne j.
\end{split}
\end{align}
\end{remark}

In~\cite[\S4]{BSu2}, the following invariant representation for the operator $N (\boldsymbol{\theta})$ was obtained:
\begin{equation*}
N (\boldsymbol{\theta}) = b(\boldsymbol{\theta})^* M(\boldsymbol{\theta}) b(\boldsymbol{\theta}),
\end{equation*}
where $M (\boldsymbol{\theta})$~is the ($4 \times 4$)-matrix given by 
\begin{equation*}
M (\boldsymbol{\theta}) = | \Omega |^{-1} \int_{\Omega} (\Lambda (\mathbf{x})^* b(\boldsymbol{\theta})^* \widetilde{g}(\mathbf{x}) + \widetilde{g}(\mathbf{x})^* b(\boldsymbol{\theta}) \Lambda (\mathbf{x}) ) \, d \mathbf{x}.
\end{equation*}
Here $\Lambda (\mathbf{x})$~is the $\Gamma$-periodic solution of problem~(\ref{equation_for_Lambda}), and $\widetilde{g}(\mathbf{x})$~is given by~(\ref{g_tilde}).
For $L(t; \boldsymbol{\theta})$, the operator $N(\boldsymbol{\theta})$ was calculated  in~\cite[Section~14.3]{BSu2} (in the case where $\mu_0 = {\mathbf 1}$). Transferring the calculation to the case of a constant matrix $\mu_0$, 
it is easy to show that
\begin{equation}
\label{N_operator}
N(\boldsymbol{\theta})  = - i f(\boldsymbol{\theta}) \mu_0^{-1/2} r(\boldsymbol{\theta}) \mu_0^{-1/2},
\end{equation}
where the matrix $r(\boldsymbol{\theta})$ is defined by  \eqref{symbol}, and 
\begin{equation}
\label{Mjk}
\begin{split}
f(\boldsymbol{\theta}) :=& (\rho_{12}(\boldsymbol{\theta}) - \rho_{21}(\boldsymbol{\theta}))  \theta_3 +
(\rho_{31}(\boldsymbol{\theta}) - \rho_{13}(\boldsymbol{\theta})) \theta_2 +
(\rho_{23}(\boldsymbol{\theta}) - \rho_{32}(\boldsymbol{\theta})) \theta_1,
\\
\rho_{jk}(\boldsymbol{\theta}) :=& | \Omega |^{-1} \int_{\Omega} \widetilde{\Phi}_j (\mathbf{x}) \bigl\langle
 \eta (\mathbf{x}) (\nabla \widetilde{\Phi}_k (\mathbf{x}) + \mathbf{c}_k), \boldsymbol{\theta} \bigr\rangle \, d\mathbf{x}.
\end{split}
\end{equation}
Obviously, the operator $N(\boldsymbol{\theta})$ is reduced by  decomposition~(\ref{N_Weyl_decomp}). 
The part of $N(\boldsymbol{\theta})$ in the subspace $G_{\boldsymbol{\theta}}^0$ is equal to zero.

\begin{remark}
\label{rem_NN}
Since $\boldsymbol{\omega}_3 (\boldsymbol{\theta}) = \alpha \mu_0^{1/2} \boldsymbol{\theta}$ \emph{(}see  \eqref{omega3}\emph{)}, then \eqref{N_operator} and the obvious identity $r(\boldsymbol{\theta}) \boldsymbol{\theta} =0$ imply  that 
		\begin{equation*}
  (N (\boldsymbol{\theta}) \boldsymbol{\omega}_3(\boldsymbol{\theta}), \boldsymbol{\omega}_j (\boldsymbol{\theta})) =
  (N (\boldsymbol{\theta}) \boldsymbol{\omega}_j (\boldsymbol{\theta}), \boldsymbol{\omega}_3 (\boldsymbol{\theta})) = 0,
\quad j=1,2,3,\quad \boldsymbol{\theta} \in \mathbb{S}^2.
		\end{equation*}
It follows {\rm (}see \eqref{2.13a}{\rm )} that the coefficient $\mu_3(\boldsymbol{\theta})$ in  expansion  \eqref{eigenvalues_series} of the eigenvalue 
$\lambda_3(t;\boldsymbol{\theta})$ corresponding to the \textquotedblleft gradient\textquotedblright \ subspace 
$G(t\boldsymbol{\theta};\mu_0)$, is equal to zero{\rm :}
		\begin{equation*}
		\mu_3(\boldsymbol{\theta})= 0,
\quad \boldsymbol{\theta} \in \mathbb{S}^2.
		\end{equation*}
\end{remark}

\begin{remark}
\label{N=0}
$1^\circ$.
Suppose that $\eta^0 = \overline{\eta}$ \emph{(}see Remark \emph{\ref{eta0_properties}$(2^\circ)$)}. 
Then the columns of the matrix
$\eta(\mathbf{x})$ are divergence-free, whence the periodic solutions $\widetilde{\Phi}_j({\mathbf x})$ $(j=1,2,3)$ of  problems  \eqref{2.8aaa} are equal to zero. In this case,  $N(\boldsymbol{\theta})=0$ for any $\boldsymbol{\theta} \in \mathbb{S}^2$.
In particular, this is the case if the matrix  $\eta(\mathbf{x})$ is constant.

\noindent
$2^\circ$. Suppose that $\eta^0 = \underline{\eta}$ \emph{(}see Remark \emph{\ref{eta0_properties}$(3^\circ)$)}. 
 Then  the vector-functions
$\eta (\mathbf{x}) (\nabla \widetilde{\Phi}_k (\mathbf{x}) + \mathbf{c}_k)$ $(k=1,2,3)$ are constant. Hence, by 
\eqref{N_operator}, \eqref{Mjk}, we have $N(\boldsymbol{\theta})=0$ for any $\boldsymbol{\theta} \in \mathbb{S}^2$.
\end{remark}

\begin{remark}
\label{rem2_5}
$1^\circ$. According to  \cite[Proposition 4.2]{BSu2}, if $b(\boldsymbol{\theta})$ and $g(\mathbf{x})$ are matrices with real entries 
{\rm (}which is satisfied for the operator $\mathcal L${\rm )} and the vectors $\boldsymbol{\omega}_l (\boldsymbol{\theta})$, $l=1,2,3$, can be chosen real 
 {\rm(}for fixed  $\boldsymbol{\theta} \in \mathbb{S}^2${\rm)}, then $N_0(\boldsymbol{\theta})=0$.
These conditions are ensured provided that $\gamma_1(\boldsymbol{\theta}) \ne \gamma_2(\boldsymbol{\theta})$,
because the vector  $\boldsymbol{\omega}_3(\boldsymbol{\theta})$ is real \emph{(}see \eqref{omega3}\emph{)}, and, in the case under consideration, the eigenvectors of problem \eqref{solenoid_germ_spec_probl} 
 are determined  uniquely  {\rm (}up to phase factors{\rm )} and can be chosen real. For such $\boldsymbol{\theta}$ we have $N(\boldsymbol{\theta}) = N_*(\boldsymbol{\theta})$ and $\mu_l(\boldsymbol{\theta})=0$, $l=1,2,3$.

\noindent
$2^\circ$. If $\gamma_1(\boldsymbol{\theta}_0) = \gamma_2(\boldsymbol{\theta}_0)$ for some $\boldsymbol{\theta}_0 \in \mathbb{S}^2$, then, by Remarks {\rm \ref{rem_N}} and 
{\rm \ref{rem_NN}}, we have $N_*(\boldsymbol{\theta}_0)=0$ and $N(\boldsymbol{\theta}_0)= N_0(\boldsymbol{\theta}_0)$.
Herewith,  $\mu_{1}(\boldsymbol{\theta}_0)$ and $\mu_{2}(\boldsymbol{\theta}_0)$ are the eigenvalues of the operator \eqref{N_operator} in the subspace $J^0_{\boldsymbol{\theta}_0}$, they are given by 
$$
\mu_{1,2}(\boldsymbol{\theta}_0) = \pm f(\boldsymbol{\theta}_0) 
\frac{\langle \mu_0 \boldsymbol{\theta}_0, \boldsymbol{\theta}_0 \rangle^{1/2}}{( \operatorname{det}\mu_0)^{1/2}}.
$$
 If $f(\boldsymbol{\theta}_0) \ne 0$  \emph{(}and then also  $\mu_{1,2}(\boldsymbol{\theta}_0) \ne 0$\emph{)},
then the vectors $\boldsymbol{\omega}_{1,2}(\boldsymbol{\theta}_0)$ are determined uniquely  {\rm (}up to phase factors{\rm )} and coincide with the eigenvectors of the matrix $\mu_0^{-1/2} r(\boldsymbol{\theta}_0) \mu_0^{-1/2}$ corresponding to the eigenvalues  $\pm i \frac{\langle \mu_0 \boldsymbol{\theta}_0, \boldsymbol{\theta}_0 \rangle^{1/2}}{( \operatorname{det}\mu_0)^{1/2}}$.
\end{remark}

\subsection{The effective operator}
	We put
	\begin{equation}
	\label{effective_oper_symb}
	S (\mathbf{k}) := t^2 S (\boldsymbol{\theta}) = b(\mathbf{k})^* g^0 b(\mathbf{k}), \quad \mathbf{k} \in \mathbb{R}^{3}.
	\end{equation}
	Expression~(\ref{effective_oper_symb}) is the symbol of the DO
	\begin{equation}
	\label{L0}
	\mathcal{L}^0 = b(\mathbf{D})^* g^0 b(\mathbf{D}) = \mu_0^{-1/2} \operatorname{curl} (\eta^0)^{-1} \operatorname{curl} \mu_0^{-1/2} -  \mu_0^{1/2}\nabla  \underline{\nu} \operatorname{div} \mu_0^{1/2},
	\end{equation}
	acting in  $L_2(\mathbb{R}^3;\mathbb{C}^3)$ on the domain  
	$H^2(\mathbb{R}^3;\mathbb{C}^3)$
and called the  \emph{effective operator} for the operator $\mathcal{L}$.

\section{Homogenization of the operator $\mathcal{L}_\varepsilon$}
\label{L_eps_approx_section}

\subsection{The operator $\mathcal{L}_\varepsilon$}
\emph{Our main object}~is the operator $\mathcal{L}_\varepsilon$ acting in $L_2(\mathbb{R}^3;\mathbb{C}^3)$ and formally given by 
\begin{equation}
\label{L_eps}
\mathcal{L}_\varepsilon =  \mu_0^{-1/2} \operatorname{curl} (\eta^\varepsilon(\mathbf{x}))^{-1} 
\operatorname{curl} \mu_0^{-1/2} - \mu_0^{1/2} \nabla \nu^\varepsilon(\mathbf{x})  \operatorname{div} \mu_0^{1/2} = b(\mathbf{D})^* g^{\varepsilon}(\mathbf{x}) b(\mathbf{D}).
\end{equation}
The precise definition is given in terms of the corresponding quadratic form (cf.~Subsection~\ref{Subsection Operator L}).
The coefficients of the operator~(\ref{L_eps}) oscillate rapidly as  $\varepsilon \to 0$.
We obtain approximations of the operators $\cos(\tau \mathcal{L}_\varepsilon^{1/2})$ and $\mathcal{L}_\varepsilon^{-1/2} \sin(\tau \mathcal{L}_\varepsilon^{1/2})$ for small $\varepsilon$.

As well as  $\mathcal L$, the operator \eqref{L_eps} is reduced by the Weyl decomposition
\eqref{Weyl_decomp}. Its parts in the divergence-free and the gradient subspaces are denoted by
${\mathcal L}_{J,\varepsilon}$ and ${\mathcal L}_{G,\varepsilon}$, respectively.

Using that $\mathcal{L}_\varepsilon$ and $\mathcal{L}^0$ are simultaneously reduced by the Weyl decomposition  \eqref{Weyl_decomp} and taking Remark \ref{PJ} into account, we obtain the following simple statement.

\begin{lemma}\label{lemma}
Suppose that $\mathcal{L}_\varepsilon$~is the operator~\emph{(\ref{L_eps})}, and $\mathcal{L}^0$~is the effective operator~\emph{(\ref{L0})}.
Let $\mathcal{L}_{J,\varepsilon}$, $\mathcal{L}_{G,\varepsilon}$ be the parts of  $\mathcal{L}_{\varepsilon}$ in the subspaces $J(\mu_0)$ and $G(\mu_0)$, respectively. Let $\mathcal{L}_{J}^0$, $\mathcal{L}_{G}^0$ be the parts of the operator $\mathcal{L}^0$ in the subspaces  $J(\mu_0)$ and 
$G(\mu_0)$, respectively. 

\noindent $1^\circ$. The estimate 
\begin{equation*}
		\bigl\| \cos( \tau \mathcal{L}_{\varepsilon}^{1/2})  - \cos( \tau (\mathcal{L}^0)^{1/2})  \bigr\|_{H^s (\mathbb{R}^3) \to L_2 (\mathbb{R}^3)} \le \mathcal{C}(\tau) \varepsilon^{\sigma}
\end{equation*}
with some $s\ge 0$ and $\sigma \ge 0$ is equivalent to the pair of inequalities 
\begin{align*}
	\bigl\| \cos( \tau \mathcal{L}_{J,\varepsilon}^{1/2})  - \cos( \tau (\mathcal{L}_J^0)^{1/2}) \bigr\|_{J^s \to J} \le \mathcal{C}(\tau) \varepsilon^{\sigma},
\\
	\bigl\| \cos( \tau \mathcal{L}_{G,\varepsilon}^{1/2})  - \cos( \tau (\mathcal{L}_G^0)^{1/2}) \bigr\|_{G^s \to G} \le \mathcal{C}(\tau) \varepsilon^{\sigma}.
\end{align*}
Here for brevity we denote $J:=J(\mu_0)$, $G:=G(\mu_0)$, $J^s := J^s(\mu_0)$, $G^s := G^s(\mu_0)$.

\noindent
$2^\circ$. The estimate 
\begin{equation*}
	\bigl\| \mathcal{L}_{\varepsilon}^{-1/2} \sin( \tau \mathcal{L}_{\varepsilon}^{1/2})  - (\mathcal{L}^0)^{-1/2} \sin( \tau (\mathcal{L}^0)^{1/2}) \bigr\|_{H^s (\mathbb{R}^3) \to L_2 (\mathbb{R}^3)} \le {\mathcal{C}}(\tau) \varepsilon^{\sigma}
\end{equation*}
with some $s\ge 0$ and $\sigma \ge 0$ is equivalent to the pair of inequalities
\begin{align*}
	\bigl\| \mathcal{L}_{J,\varepsilon}^{-1/2} \sin( \tau \mathcal{L}_{J,\varepsilon}^{1/2})  - (\mathcal{L}_J^0)^{-1/2}\sin( \tau (\mathcal{L}_J^0)^{1/2})  \bigr\|_{J^s \to J} \le
{\mathcal{C}}(\tau) \varepsilon^{\sigma},
\\
	\bigl\| \mathcal{L}_{G,\varepsilon}^{-1/2} \sin( \tau \mathcal{L}_{G,\varepsilon}^{1/2})  -
(\mathcal{L}_G^0)^{-1/2} \sin( \tau (\mathcal{L}_G^0)^{1/2})  \bigr\|_{G^s \to G} \le
{\mathcal{C}}(\tau) \varepsilon^{\sigma}.
\end{align*}
$3^\circ$. The estimate 
\begin{equation*}
	\bigl\| \mathcal{L}_{\varepsilon}^{-1/2} \sin( \tau \mathcal{L}_{\varepsilon}^{1/2})D_j  - (\mathcal{L}^0)^{-1/2} \sin( \tau (\mathcal{L}^0)^{1/2}) D_j \bigr\|_{H^s (\mathbb{R}^3) \to L_2 (\mathbb{R}^3)} \le {\mathcal{C}}(\tau) \varepsilon^{\sigma}
\end{equation*}
with some $s\ge 0$ and $\sigma \ge 0$ is equivalent to the pair of inequalities
\begin{align*}
	\bigl\| \mathcal{L}_{J,\varepsilon}^{-1/2} \sin( \tau \mathcal{L}_{J,\varepsilon}^{1/2}) D_j  - (\mathcal{L}_J^0)^{-1/2}\sin( \tau (\mathcal{L}_J^0)^{1/2})  D_j \bigr\|_{J^s \to J} \le
{\mathcal{C}}(\tau) \varepsilon^{\sigma},
\\
	\bigl\| \mathcal{L}_{G,\varepsilon}^{-1/2} \sin( \tau \mathcal{L}_{G,\varepsilon}^{1/2}) D_j  -
(\mathcal{L}_G^0)^{-1/2} \sin( \tau (\mathcal{L}_G^0)^{1/2}) D_j  \bigr\|_{G^s \to G} \le
{\mathcal{C}}(\tau) \varepsilon^{\sigma}.
\end{align*}
Here $j=1,2,3$.
\end{lemma}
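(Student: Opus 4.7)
The key structural observation is that $\mathcal{L}_\varepsilon$ and $\mathcal{L}^0$ are both reduced by the Weyl decomposition $L_2(\mathbb{R}^3;\mathbb{C}^3)=J(\mu_0)\oplus G(\mu_0)$, so any function of these operators obtained via the Borel functional calculus is block-diagonal with respect to the decomposition. In particular,
\begin{equation*}
\cos(\tau\mathcal{L}_\varepsilon^{1/2})=\cos(\tau\mathcal{L}_{J,\varepsilon}^{1/2})\oplus\cos(\tau\mathcal{L}_{G,\varepsilon}^{1/2}),
\end{equation*}
and analogously for $\mathcal{L}^0$ and for the operator $\mathcal{L}_\varepsilon^{-1/2}\sin(\tau\mathcal{L}_\varepsilon^{1/2})$. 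Therefore the operator differences appearing on the left-hand sides of the three estimates are also block-diagonal.

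My plan is to combine this block-diagonal structure with Remark \ref{PJ} to split the $H^s$-norm on the domain side. By Remark \ref{PJ}, the Weyl decomposition restricts to an orthogonal decomposition $H^s(\mathbb{R}^3;\mathbb{C}^3)=J^s(\mu_0)\oplus G^s(\mu_0)$, so every $\mathbf{u}\in H^s$ decomposes as $\mathbf{u}=\mathbf{u}_J+\mathbf{u}_G$ with $\|\mathbf{u}\|_{H^s}^2=\|\mathbf{u}_J\|_{H^s}^2+\|\mathbf{u}_G\|_{H^s}^2$, while the image decomposition is orthogonal in $L_2$. Writing $T_\varepsilon$ for any of the three block-diagonal difference operators, one has $\|T_\varepsilon\mathbf{u}\|_{L_2}^2=\|T_{J,\varepsilon}\mathbf{u}_J\|_J^2+\|T_{G,\varepsilon}\mathbf{u}_G\|_G^2$, which immediately gives $\|T_\varepsilon\|_{H^s\to L_2}=\max\{\|T_{J,\varepsilon}\|_{J^s\to J},\,\|T_{G,\varepsilon}\|_{G^s\to G}\}$; this yields parts $1^\circ$ and $2^\circ$ at once (the estimate on the left implies a bound on the max, which controls each summand, and vice versa, possibly up to a factor of $2$ that is harmless).

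The only place that requires an extra argument is part $3^\circ$, where the operator involves the factor $D_j$, which a priori need not respect the Weyl decomposition. Here I would record the elementary observation that, because $\mu_0$ is a \emph{constant} matrix, the operator $D_j$ commutes with $\mu_0^{1/2}\nabla$ and with $\operatorname{div}\mu_0^{1/2}$. Consequently, if $\mathbf{u}=\mu_0^{1/2}\nabla\phi\in G^s(\mu_0)$ then $D_j\mathbf{u}=\mu_0^{1/2}\nabla(D_j\phi)\in G^{s-1}(\mu_0)$, and if $\operatorname{div}\mu_0^{1/2}\mathbf{u}=0$ then $\operatorname{div}\mu_0^{1/2}(D_j\mathbf{u})=0$, so $D_j\mathbf{u}\in J^{s-1}(\mu_0)$. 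Thus $D_j$ is also block-diagonal with respect to the Weyl decomposition (this is the only nontrivial step, and it is short), which reduces part $3^\circ$ to exactly the same block-diagonal argument as parts $1^\circ$, $2^\circ$.

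I do not expect any serious obstacle: the whole statement is a clean consequence of the simultaneous reducibility of $\mathcal{L}_\varepsilon$, $\mathcal{L}^0$, and (because $\mu_0$ is constant) $D_j$ by the Weyl decomposition, together with the orthogonality of the decomposition in each of the spaces $L_2$ and $H^s$ supplied by Remark \ref{PJ}. The mild point worth flagging is verifying that $D_j$ preserves $J(\mu_0)$ and $G(\mu_0)$; once this is noted, the rest is a two-line computation.
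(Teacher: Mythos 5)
Your proof is correct and takes exactly the approach the paper indicates (the paper gives no explicit proof, only the remark preceding the lemma that $\mathcal{L}_\varepsilon$ and $\mathcal{L}^0$ are simultaneously reduced by the Weyl decomposition and that Remark~\ref{PJ} is used): the operator-valued functions inherit the block-diagonal structure via the functional calculus, the $H^s\to L_2$ norm of a block-diagonal operator is the maximum of the block norms because both decompositions are orthogonal, and $D_j$ respects the Weyl decomposition since the projection $\mathcal{P}(\mu_0)$ is a constant-symbol Fourier multiplier and hence commutes with $D_j$. One tiny remark: the ``possibly up to a factor of $2$'' hedge is unnecessary — the equality $\|T_\varepsilon\|_{H^s\to L_2}=\max\{\|T_{J,\varepsilon}\|_{J^s\to J},\|T_{G,\varepsilon}\|_{G^s\to G}\}$ is exact for an orthogonally block-diagonal operator.
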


\subsection{Approximation for the operator-valued  functions of $\mathcal{L}_\varepsilon$ in the principal order}

For convenience of further references,  the following set of parameters is called the \textit{problem data}:
\begin{equation}
\label{problem_data}
|\mu_0|,\ |\mu_0^{-1}|,\ 
\| \eta \|_{L_\infty}, \ \| \eta^{-1} \|_{L_\infty},\ \| \nu \|_{L_\infty},\ \| \nu^{-1} \|_{L_\infty}; \  \text{the parameters of the lattice}\ \Gamma.
\end{equation}
 
The following theorem is a consequence of the general results for the class of operators 
$\mathcal{A}_\varepsilon$.

\begin{theorem}
		\label{cos_thrm1}
 	Let $\mathcal{L}_\varepsilon$~be the operator~\emph{(\ref{L_eps})}, and let $\mathcal{L}^0$~be the effective operator~\emph{(\ref{L0})}. Then for $\tau \in \mathbb{R}$ and $\varepsilon >0$ we have
 	 	\begin{gather}
	 	\label{cos_est}
	 	\bigl\| \cos( \tau \mathcal{L}_{\varepsilon}^{1/2})  - \cos( \tau (\mathcal{L}^0)^{1/2}) \bigr\|_{H^2 (\mathbb{R}^3) \to L_2 (\mathbb{R}^3)} \le {C}_1 (1+|\tau|) \varepsilon,
	 	\\
	 	\label{sin_est}
	 	\bigl\|\mathcal{L}_\varepsilon^{-1/2} \sin( \tau \mathcal{L}_\varepsilon^{1/2})  - (\mathcal{L}^0)^{-1/2} \sin( \tau (\mathcal{L}^0)^{1/2})
\bigr\|_{H^1 (\mathbb{R}^3) \to L_2 (\mathbb{R}^3)} \le {C}_2 (1+ |\tau| )  \varepsilon.
	 	\end{gather}
The constants ${C}_1$ and ${C}_2$ depend only on the problem data \eqref{problem_data}.
	  \end{theorem}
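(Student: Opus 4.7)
The plan is to recognize that Theorem~\ref{cos_thrm1} is a direct specialization of the general operator error estimates \eqref{est_cos_A_eps} and \eqref{est_sin_A_eps} for the class $\mathcal{A}_\varepsilon = b(\mathbf{D})^* g(\mathbf{x}/\varepsilon) b(\mathbf{D})$ proved in \cite{BSu4, M1, M2}. The whole content of the proof is therefore a matching of hypotheses and an explicit tracking of the constants so that they may be expressed in terms of the problem data \eqref{problem_data}.

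First I would recall the representation $\mathcal{L}_\varepsilon = b(\mathbf{D})^* g^\varepsilon(\mathbf{x}) b(\mathbf{D})$ established in Subsection~\ref{Subsection Operator L}, with $m=4$, $n=3$, $b(\mathbf{D})$ given by its symbol \eqref{symbol}, and $g(\mathbf{x}) = \operatorname{diag}(\eta(\mathbf{x})^{-1}, \nu(\mathbf{x}))$. Next I would check that all hypotheses required for the general theorems are in force: the conditions \eqref{eta}, \eqref{nu} guarantee that $g(\mathbf{x})$ is bounded and uniformly positive definite, with
\[
\|g\|_{L_\infty} = \max\{\|\eta^{-1}\|_{L_\infty}, \|\nu\|_{L_\infty}\}, \quad
\|g^{-1}\|_{L_\infty} = \max\{\|\eta\|_{L_\infty}, \|\nu^{-1}\|_{L_\infty}\},
\]
while the rank condition for $b(\boldsymbol{\xi})$ together with the two-sided bound \eqref{DSu1} holds with
$\alpha_0 = \min\{|\mu_0|^{-1}, |\mu_0^{-1}|^{-1}\}$ and $\alpha_1 = |\mu_0| + |\mu_0^{-1}|$.

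Then I would cite the abstract results: estimate \eqref{est_cos_A_eps} from \cite{BSu4} applied to $\mathcal{A}_\varepsilon = \mathcal{L}_\varepsilon$ with effective operator $\mathcal{A}^0 = \mathcal{L}^0$ (identified in Subsection~\ref{sec_effective} and formula \eqref{L0}) yields \eqref{cos_est}, and similarly estimate \eqref{est_sin_A_eps} from \cite{M1, M2} yields \eqref{sin_est}. The constants in these abstract estimates are known to depend only on $\alpha_0$, $\alpha_1$, $\|g\|_{L_\infty}$, $\|g^{-1}\|_{L_\infty}$, and the parameters of the lattice $\Gamma$; by the formulas above, these quantities are controlled solely by the problem data \eqref{problem_data}, which is exactly what is claimed.

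The main (and essentially only) obstacle is the bookkeeping of constants: one must verify that the dependencies in the abstract theorems from \cite{BSu4, M1, M2} are precisely of the form just listed, and that no hidden quantity (e.g., an inverse power of the spectral gap or of a smallness parameter $t^0$) enters outside the list \eqref{problem_data}. Since the threshold quantities $\delta$ and $t^0$ introduced in Section~2 are themselves expressed through $\alpha_0$, $\alpha_1$, $\|g\|_{L_\infty}$, $\|g^{-1}\|_{L_\infty}$ and $r_0$, this verification is a straightforward inspection rather than a genuine difficulty. Once this is done, no additional analysis is needed and the theorem follows.
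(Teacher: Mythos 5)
Your proposal is correct and matches the paper's approach exactly: the paper states that estimate \eqref{cos_est} follows from \cite[Theorem~13.1]{BSu4} and estimate \eqref{sin_est} from \cite[Theorem~9.1]{M2} (see also \cite{M1}), applied to $\mathcal{L}_\varepsilon = b(\mathbf{D})^* g^\varepsilon b(\mathbf{D})$ with the verifications of \eqref{DSu1} and the boundedness of $g, g^{-1}$ already recorded in \S\ref{Subsection Operator L}. Your extra emphasis on tracking that the constants depend only on $\alpha_0$, $\alpha_1$, $\|g\|_{L_\infty}$, $\|g^{-1}\|_{L_\infty}$, and the lattice parameters — all controlled by the problem data \eqref{problem_data} — is precisely the (implicit) content behind the paper's one-line citation.
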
	

 Estimate \eqref{cos_est} was obtained in  \cite[Theorem 13.1]{BSu4}, and estimate \eqref{sin_est} was proved in  \cite[Theorem 9.1]{M2} (see also \cite{M1}).

By interpolation,  Theorem \ref{cos_thrm1} implies the following result (see \cite[Theorem 13.2]{BSu4} and \cite[Corollary 15.3]{DSu4}).

\begin{theorem}
		\label{cos_thrm2}
 	Let $\mathcal{L}_\varepsilon$~be the operator~\emph{(\ref{L_eps})}, and let $\mathcal{L}^0$~be the effective operator~\emph{(\ref{L0})}. Then for $0 \le s \le 2$, $\tau \in \mathbb{R}$, and $\varepsilon >0$ we have 
	 	\begin{equation*}
	 	\bigl\| \cos( \tau \mathcal{L}_{\varepsilon}^{1/2})  - \cos( \tau (\mathcal{L}^0)^{1/2})  \bigr\|_{H^s (\mathbb{R}^3) \to L_2 (\mathbb{R}^3)} \le \mathcal{C}_1(s) (1+|\tau|)^{s/2} \varepsilon^{s/2},
	 	\end{equation*}
	 	\begin{equation*}
	 	\begin{split}
	 	\bigl\|\mathcal{L}_\varepsilon^{-1/2} \sin( \tau \mathcal{L}_\varepsilon^{1/2}) D_j  - (\mathcal{L}^0)^{-1/2} \sin( \tau (\mathcal{L}^0)^{1/2}) D_j  \bigr\|_{H^s (\mathbb{R}^3) \to L_2 (\mathbb{R}^3)}
 \le \mathcal{C}_2 (s) (1+ |\tau|)^{s/2}  \varepsilon^{s/2},
	 	\end{split}
	 	\end{equation*}
 $j=1,2,3$. The constants $\mathcal{C}_1(s)$ and $\mathcal{C}_2(s)$ depend on the problem data \eqref{problem_data} and on $s$.
	  \end{theorem}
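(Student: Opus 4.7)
The plan is a direct interpolation argument between the estimates of Theorem \ref{cos_thrm1} and trivial uniform bounds at the $L_2 \to L_2$ endpoint. Since the Sobolev scale satisfies the standard complex‐interpolation identity $[L_2(\mathbb{R}^3;\mathbb{C}^3), H^2(\mathbb{R}^3;\mathbb{C}^3)]_{s/2} = H^s(\mathbb{R}^3;\mathbb{C}^3)$ for $0 \le s \le 2$ (with equivalent norms), an operator $T$ satisfying $\|T\|_{L_2 \to L_2} \le M_0$ and $\|T\|_{H^2 \to L_2} \le M_1$ obeys
$$
\|T\|_{H^s \to L_2} \le C(s)\,M_0^{1 - s/2}\,M_1^{s/2}, \qquad 0 \le s \le 2.
$$
The whole task therefore reduces to controlling the two endpoints.

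For the cosine statement, set $T_\tau := \cos(\tau \mathcal{L}_\varepsilon^{1/2}) - \cos(\tau (\mathcal{L}^0)^{1/2})$. At $s = 2$, estimate \eqref{cos_est} gives $\|T_\tau\|_{H^2 \to L_2} \le C_1(1 + |\tau|)\varepsilon$. At $s = 0$, since both $\mathcal{L}_\varepsilon$ and $\mathcal{L}^0$ are nonnegative selfadjoint, the spectral theorem yields $\|\cos(\tau \mathcal{L}_\varepsilon^{1/2})\|_{L_2 \to L_2} \le 1$ and likewise for $\mathcal{L}^0$, so $\|T_\tau\|_{L_2 \to L_2} \le 2$. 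Plugging $M_0 = 2$ and $M_1 = C_1(1+|\tau|)\varepsilon$ into the interpolation inequality yields the first claim with $\mathcal{C}_1(s) = 2^{1-s/2} C_1^{s/2} C(s)$.

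For the sine statement, set $R_\tau^{(j)} := \bigl[\mathcal{L}_\varepsilon^{-1/2} \sin(\tau \mathcal{L}_\varepsilon^{1/2}) - (\mathcal{L}^0)^{-1/2} \sin(\tau (\mathcal{L}^0)^{1/2})\bigr] D_j$. Since $D_j \colon H^{s+1} \to H^s$ is bounded with norm $1$, estimate \eqref{sin_est} immediately gives $\|R_\tau^{(j)}\|_{H^2 \to L_2} \le C_2(1 + |\tau|)\varepsilon$. For the $L_2 \to L_2$ endpoint, use functional calculus: $\mathcal{L}_\varepsilon^{-1/2} \sin(\tau \mathcal{L}_\varepsilon^{1/2})$ commutes with $\mathcal{L}_\varepsilon^{1/2}$, so
$$
\mathcal{L}_\varepsilon^{-1/2} \sin(\tau \mathcal{L}_\varepsilon^{1/2}) D_j = \sin(\tau \mathcal{L}_\varepsilon^{1/2})\bigl[\mathcal{L}_\varepsilon^{-1/2} D_j\bigr].
$$
By the lower estimate in \eqref{estimates}, $(\mathcal{L}_\varepsilon v, v) \ge c_0 \|\mathbf{D} v\|_{L_2}^2$, so with $v = \mathcal{L}_\varepsilon^{-1/2} u$ one obtains $\|D_j \mathcal{L}_\varepsilon^{-1/2}\|_{L_2 \to L_2} \le c_0^{-1/2}$, and duality yields the same bound for $\mathcal{L}_\varepsilon^{-1/2} D_j$, uniformly in $\varepsilon$. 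Combined with $\|\sin(\tau \mathcal{L}_\varepsilon^{1/2})\|_{L_2 \to L_2} \le 1$, this gives $\|R_\tau^{(j)}\|_{L_2 \to L_2} \le 2 c_0^{-1/2}$. Interpolating now produces the second claim.

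No step is a genuine obstacle: the only points worth care are verifying that the constant in the $L_2 \to L_2$ endpoint depends solely on the problem data \eqref{problem_data} (through $c_0 = \alpha_0\|g^{-1}\|_{L_\infty}^{-1}$) and that the complex-interpolation constant $C(s)$ for the Sobolev scale is uniform on $[0,2]$; both are standard and can be absorbed into $\mathcal{C}_1(s)$ and $\mathcal{C}_2(s)$.
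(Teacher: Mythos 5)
Your interpolation argument is correct and is essentially the same route the paper takes — the paper's ``proof'' is just the single sentence ``By interpolation, Theorem~\ref{cos_thrm1} implies...'' together with citations to \cite{BSu4} and \cite{DSu4}. You have supplied exactly the two ingredients that sentence leaves implicit: the $L_2 \to L_2$ endpoint bounds (trivial for the cosine; requiring the commutation $\mathcal{L}_\varepsilon^{-1/2}\sin(\tau\mathcal{L}_\varepsilon^{1/2})D_j = \sin(\tau\mathcal{L}_\varepsilon^{1/2})\,\mathcal{L}_\varepsilon^{-1/2}D_j$, the form bound $\mathcal{L}_\varepsilon \ge c_0\mathbf{D}^*\mathbf{D}$, and duality for the sine) and the exact Sobolev interpolation identity. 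Note that for the Bessel-potential scale the interpolation constant is actually $1$ (the weights $(1+|\xi|^2)^{s}$ interpolate exactly), so you could even drop the $C(s)$ factor.
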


As was shown in \cite{DSu1,DSu2,DSu4}, under some additional assumptions, the results of Theorems \ref{cos_thrm1} and \ref{cos_thrm2} can be improved. 

\begin{condition}
\label{cond1}
Let~$N(\boldsymbol{\theta})$ be the operator defined by~\emph{(\ref{N_operator})}, \eqref{Mjk}. Suppose that  \hbox{$N(\boldsymbol{\theta}) = 0$} for any $\boldsymbol{\theta} \in \mathbb{S}^{2}$ which is equivalent to the assumption that $f(\boldsymbol{\theta}) \equiv 0$.
\end{condition}

Theorem 15.2  from \cite{DSu4}  directly implies the following result.

	\begin{theorem}
		\label{cos_thrm3a}
		Let $\mathcal{L}_\varepsilon$~be the operator~\emph{(\ref{L_eps})}, and let  $\mathcal{L}^0$~be the effective operator~\emph{(\ref{L0})}. Suppose that Condition \emph{\ref{cond1}} is satisfied.
		Then for $\tau \in \mathbb{R}$ and $\varepsilon >0$ we have 
	 	\begin{equation}
	 	\label{cos_est3}
	 \bigl\| \cos( \tau \mathcal{L}_{\varepsilon}^{1/2})  - \cos( \tau (\mathcal{L}^0)^{1/2})  \bigr\|_{H^{3/2} (\mathbb{R}^3) \to L_2 (\mathbb{R}^3)} \le {C}_3 (1+|\tau|)^{1/2} \varepsilon,
	 	\end{equation}
	 	\begin{equation}
	 	\label{sin_est3}
\begin{aligned}
\bigl\|\mathcal{L}_\varepsilon^{-1/2} \sin( \tau \mathcal{L}_\varepsilon^{1/2})  - (\mathcal{L}^0)^{-1/2} \sin( \tau (\mathcal{L}^0)^{1/2})
\bigr\|_{H^{1/2} (\mathbb{R}^3) \to L_2 (\mathbb{R}^3)}  \le {C}_4 (1+ |\tau| )^{1/2}  \varepsilon.
	\end{aligned}
	\end{equation}
The constants ${C}_3$ and ${C}_4$ depend only on the problem data \eqref{problem_data}. 
		\end{theorem}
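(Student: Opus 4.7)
The plan is to obtain Theorem \ref{cos_thrm3a} as a direct specialization of the abstract Theorem~15.2 of \cite{DSu4}, which establishes estimates of the form \eqref{usilenie_est_cos_A_eps}, \eqref{usilenie_est_sin_A_eps} for any operator $\mathcal{A}_\varepsilon = b(\mathbf{D})^* g(\mathbf{x}/\varepsilon) b(\mathbf{D})$ from the class \eqref{A_eps} for which the associated threshold operator $N(\boldsymbol{\theta})$ vanishes identically on the unit sphere. The work therefore reduces to matching the present concrete setting to that abstract framework.

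First I would recall, from Subsection~\ref{Subsection Operator L}, that $\mathcal{L}_\varepsilon$ has the factorized form $b(\mathbf{D})^* g^\varepsilon(\mathbf{x}) b(\mathbf{D})$ with $d=3$, $m=4$, $n=3$, where $b(\boldsymbol{\xi})$ is given by \eqref{symbol} and $g(\mathbf{x}) = \mathrm{diag}(\eta(\mathbf{x})^{-1}, \nu(\mathbf{x}))$. The conditions underlying the general theory are satisfied: the symbol $b(\boldsymbol{\xi})$ has maximal rank $3$ for $\boldsymbol{\xi} \ne 0$, which is quantified by \eqref{DSu1} with explicit $\alpha_0, \alpha_1$, and $g(\mathbf{x})$ is positive definite and bounded with $\|g\|_{L_\infty}$, $\|g^{-1}\|_{L_\infty}$ controlled by $\|\eta\|_{L_\infty}, \|\eta^{-1}\|_{L_\infty}, \|\nu\|_{L_\infty}, \|\nu^{-1}\|_{L_\infty}$. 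The effective operator $\mathcal{L}^0$ from \eqref{L0} is the specialization of $\mathcal{A}^0 = b(\mathbf{D})^* g^0 b(\mathbf{D})$ with $g^0$ given by \eqref{g00}.

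Next I would identify the hypothesis of Theorem~15.2 of \cite{DSu4} with the concrete Condition~\ref{cond1}. The abstract assumption there is that the operator $N(\boldsymbol{\theta})$, associated with $\mathcal{A}(\mathbf{k})$ via the power series expansions \eqref{eigenvalues_series}, \eqref{eigenvectors_series}, is equal to zero for every $\boldsymbol{\theta} \in \mathbb{S}^{d-1}$. For the operator $L(t;\boldsymbol{\theta})$ this $N(\boldsymbol{\theta})$ was computed explicitly, yielding formula \eqref{N_operator} with the scalar factor $f(\boldsymbol{\theta})$ from \eqref{Mjk}. Hence $N(\boldsymbol{\theta}) \equiv 0$ on $\mathbb{S}^2$ is equivalent to $f(\boldsymbol{\theta}) \equiv 0$, which is exactly the content of Condition \ref{cond1}. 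With this verification in hand, \eqref{cos_est3} and \eqref{sin_est3} follow verbatim from the abstract theorem, the constants $C_3, C_4$ being expressible through $\alpha_0, \alpha_1, \|g\|_{L_\infty}, \|g^{-1}\|_{L_\infty}$ and the lattice parameters, i.e.\ through the problem data \eqref{problem_data}.

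The only conceptual point requiring care, rather than a genuine obstacle, is ensuring that the reduction by the Weyl decomposition plays no role here: Theorem~15.2 of \cite{DSu4} concerns the full operator $\mathcal{A}_\varepsilon$, not a reduced part of it, so we apply it to $\mathcal{L}_\varepsilon$ on the whole of $L_2(\mathbb{R}^3;\mathbb{C}^3)$ and do not need Lemma \ref{lemma} at this stage. The extraction of the divergence-free part, and any sharpness discussion showing these improved estimates cannot generally be strengthened, are separate matters belonging to the subsequent analysis.
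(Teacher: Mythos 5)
Your proposal is correct and matches the paper's own argument: the paper's proof of Theorem~\ref{cos_thrm3a} consists precisely of the remark that Theorem~15.2 of \cite{DSu4} directly implies the result, and your write-up simply spells out the verification that the concrete operator $\mathcal{L}_\varepsilon = b(\mathbf{D})^* g^\varepsilon b(\mathbf{D})$ fits the abstract framework and that Condition~\ref{cond1} is the required hypothesis $N(\boldsymbol{\theta})\equiv 0$. You also correctly observe that, unlike in Theorem~\ref{cos_thrm3aa}, no detour through the Weyl decomposition and Lemma~\ref{lemma} is needed here.
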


By interpolation, Theorem \ref{cos_thrm3a} implies the following result (see \cite[Corollary 15.4]{DSu4}).

\begin{theorem}
		\label{cos_thrm4}
 	Suppose that the assumptions of Theorem \emph{\ref{cos_thrm3a}} are satisfied.
 	Then for \hbox{$0 \le s \le 3/2$}, $\tau \in \mathbb{R}$, and $\varepsilon >0$ we have 
	 	\begin{equation}
	 	\label{cos_est4}
	 \bigl\| \cos( \tau \mathcal{L}_{\varepsilon}^{1/2})  - \cos( \tau (\mathcal{L}^0)^{1/2})  \bigr\|_{H^s (\mathbb{R}^3) \to L_2 (\mathbb{R}^3)} \le \mathcal{C}_3(s) (1+|\tau|)^{s/3} \varepsilon^{2s/3},
	 	\end{equation}
	 	\begin{equation}
	 	\label{sin_est4}
	 	\begin{split}
	 \bigl\|\mathcal{L}_\varepsilon^{-1/2} \sin( \tau \mathcal{L}_\varepsilon^{1/2}) D_j  - (\mathcal{L}^0)^{-1/2} \sin( \tau (\mathcal{L}^0)^{1/2}) D_j \bigr\|_{H^s (\mathbb{R}^3) \to L_2 (\mathbb{R}^3)}
 \le \mathcal{C}_4 (s) (1+ |\tau|)^{s/3}  \varepsilon^{2s/3},
	 	\end{split}
	 	\end{equation}
$j=1,2,3$. The constants $\mathcal{C}_3(s)$ and  $\mathcal{C}_4(s)$ depend on the problem data  \eqref{problem_data} and on $s$. 	
  \end{theorem}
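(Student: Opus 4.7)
The plan is to deduce Theorem~\ref{cos_thrm4} from Theorem~\ref{cos_thrm3a} by operator interpolation on the Hilbert scale of Sobolev spaces, pairing the strong endpoint $s = 3/2$ with a trivial uniform endpoint at $s = 0$.

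For the cosine bound~\eqref{cos_est4} the $s = 0$ endpoint is immediate: by the spectral theorem the operators $\cos(\tau\mathcal{L}_\varepsilon^{1/2})$ and $\cos(\tau(\mathcal{L}^0)^{1/2})$ are uniformly bounded on $L_2(\mathbb{R}^3;\mathbb{C}^3)$ by $1$, so the norm of their difference is at most $2$. Combining this with~\eqref{cos_est3} via the identification $[L_2(\mathbb{R}^3;\mathbb{C}^3), H^{3/2}(\mathbb{R}^3;\mathbb{C}^3)]_\theta = H^{3\theta/2}(\mathbb{R}^3;\mathbb{C}^3)$ and setting $\theta = 2s/3$, the standard interpolation inequality
$$
\|T\|_{H^s \to L_2} \le \|T\|_{L_2 \to L_2}^{1-\theta}\,\|T\|_{H^{3/2} \to L_2}^{\theta}
$$
yields~\eqref{cos_est4} with $\mathcal{C}_3(s) = 2^{1 - 2s/3} C_3^{2s/3}$, reproducing the exponents $(1+|\tau|)^{s/3}$ and $\varepsilon^{2s/3}$.

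For~\eqref{sin_est4} I argue in the same way, but the $s = 0$ endpoint requires the uniform $L_2 \to L_2$ boundedness of $\mathcal{L}_\varepsilon^{-1/2}\sin(\tau\mathcal{L}_\varepsilon^{1/2}) D_j$. By the functional calculus $\sin(\tau\mathcal{L}_\varepsilon^{1/2})$ commutes with $\mathcal{L}_\varepsilon^{-1/2}$ and has $L_2$-norm at most $1$, so it suffices to bound $\|\mathcal{L}_\varepsilon^{-1/2} D_j\|_{L_2 \to L_2}$. The lower form estimate in~\eqref{estimates} gives $\|D_j \mathbf{u}\|_{L_2} \le c_0^{-1/2}\|\mathcal{L}_\varepsilon^{1/2}\mathbf{u}\|_{L_2}$ for $\mathbf{u} \in H^1(\mathbb{R}^3;\mathbb{C}^3)$, and dualizing produces $\|\mathcal{L}_\varepsilon^{-1/2}D_j\|_{L_2 \to L_2} \le c_0^{-1/2}$ with $c_0$ depending only on the problem data~\eqref{problem_data}. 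The same bound holds for $\mathcal{L}^0$, so the full difference has $L_2 \to L_2$ norm controlled by a problem-data constant; interpolating with~\eqref{sin_est3} at the same angle $\theta = 2s/3$ gives~\eqref{sin_est4}.

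There is no substantive analytic obstacle, since both endpoints are elementary and operator interpolation on the Hilbert scale $\{H^s\}$ is classical. The only point that requires care is the uniform dual bound for $\mathcal{L}_\varepsilon^{-1/2} D_j$ on $L_2$, which is forced by the coercivity of the form $\mathfrak{l}[\cdot,\cdot]$ from~\eqref{estimates}; this bound is already implicit in the formulation of Theorems~\ref{cos_thrm1} and~\ref{cos_thrm3a} and guarantees that the constants $\mathcal{C}_3(s)$, $\mathcal{C}_4(s)$ depend only on the problem data and on $s$.
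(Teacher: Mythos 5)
Your proposal is correct and takes essentially the same route as the paper, which deduces Theorem~\ref{cos_thrm4} from Theorem~\ref{cos_thrm3a} by interpolation on the Sobolev scale (citing \cite[Corollary 15.4]{DSu4}) with the trivial $L_2 \to L_2$ endpoint; you have merely filled in the details, including the correct observation that the uniform $s=0$ bound for $\mathcal{L}_\varepsilon^{-1/2}\sin(\tau\mathcal{L}_\varepsilon^{1/2})D_j$ follows from the coercivity \eqref{estimates} via duality.
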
	

Note that the operators $\mathcal{L}_{J,\varepsilon}$ and $\mathcal{L}^0_J$ depend on the coefficient $\eta(\mathbf{x})$, but not on  $\nu({\mathbf x})$.
Conversely, $\mathcal{L}_{G,\varepsilon}$ and $\mathcal{L}^0_G$ depend on the coefficient $\nu(\mathbf{x})$, but not on  $\eta({\mathbf x})$.
Consider the operator $\check{\mathcal L}_\varepsilon$ with the initial coefficients $\nu(\mathbf{x})$, $\mu_0$
and the constant coefficient
$\check{\eta}$ (for simplicity, let $\check{\eta}= {\mathbf 1}_3$). By Remark \ref{N=0}($1^\circ$), such an operator satisfies Condition \ref{cond1}. Then, by Theorems \ref{cos_thrm3a} and \ref{cos_thrm4}, the operator
$\check{\mathcal L}_\varepsilon$ satisfies
estimates of the form  \eqref{cos_est3}--\eqref{sin_est4}. Using Lemma \ref{lemma}, we arrive at the following statement.

	\begin{corollary}
		\label{corollary}
		Let $\mathcal{L}_\varepsilon$~be the operator~\emph{(\ref{L_eps})}, and let $\mathcal{L}^0$~be the effective operator~\emph{(\ref{L0})}.
Let $\mathcal{L}_{G,\varepsilon}$ and $\mathcal{L}_{G}^0$ be the parts of the operators  $\mathcal{L}_{\varepsilon}$ and $\mathcal{L}^0$ in the subspace $G(\mu_0)$.
Then for  $\tau \in {\mathbb R}$ and $\varepsilon >0$ we have
		\begin{gather*}
	\bigl\| \cos( \tau \mathcal{L}_{G,\varepsilon}^{1/2})  - \cos( \tau (\mathcal{L}_G^0)^{1/2})
\bigr\|_{G^{3/2} \to G} \le \check{C}_3 (1+| \tau|)^{1/2} \varepsilon,
\\
	\bigl\|\mathcal{L}_{G,\varepsilon}^{-1/2} \sin( \tau \mathcal{L}_{G,\varepsilon}^{1/2}) -
(\mathcal{L}_G^0)^{-1/2} \sin( \tau (\mathcal{L}_G^0)^{1/2})  \bigr\|_{G^{1/2}  \to G} \le \check{C}_4 (1+ |\tau|)^{1/2}  \varepsilon.
		\end{gather*}
For $0\le s \le 3/2$, $\tau \in {\mathbb R}$, and $\varepsilon >0$ we have 
\begin{gather*}	
		\bigl\| \cos( \tau \mathcal{L}_{G,\varepsilon}^{1/2})  - \cos( \tau (\mathcal{L}_G^0)^{1/2})
\bigr\|_{G^s \to G} \le \check{\mathcal{C}}_3 (s) (1+ |\tau| )^{s/3} \varepsilon^{2s/3},
		\\
		\bigl\|\mathcal{L}_{G,\varepsilon}^{-1/2} \sin( \tau \mathcal{L}_{G,\varepsilon}^{1/2}) D_j -
(\mathcal{L}_G^0)^{-1/2} \sin( \tau (\mathcal{L}_G^0)^{1/2})  D_j \bigr\|_{G^s  \to G} 
\le  \check{\mathcal{C}}_4 (s)(1+| \tau|)^{s/3}  \varepsilon^{2s/3}, \quad j=1,2,3.
		\end{gather*}
	The constants $\check{C}_3$ and $\check{C}_4$ are controlled in terms of $|\mu_0|$, $|\mu_0^{-1}|$, $\|\nu\|_{L_\infty}$, $\|\nu^{-1}\|_{L_\infty}$, and the parameters of the lattice $\Gamma$.
The constants $\check{\mathcal C}_3(s)$ and $\check{\mathcal C}_4(s)$ depend on the same parameters and on $s$.
	\end{corollary}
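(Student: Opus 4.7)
The guiding idea is that the gradient part of the operator is insensitive to the coefficient $\eta$. Indeed, for $\mathbf{u}=\mu_0^{1/2}\nabla\phi\in G(\mu_0)$ one has $\operatorname{curl}\mu_0^{-1/2}\mathbf{u}=\operatorname{curl}\nabla\phi=0$, so on the gradient subspace the first block of the quadratic form $\mathfrak{l}$ vanishes identically and the remaining expression involves only $\nu^\varepsilon$ and $\mu_0$. Thus $\mathcal{L}_{G,\varepsilon}$ is the same operator as the gradient part of any operator of the form \eqref{L_eps} with $\nu$ and $\mu_0$ unchanged but with $\eta$ replaced by an arbitrary periodic positive definite matrix-valued function. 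The analogous remark applies to $\mathcal{L}_G^0$, which depends only on $\underline{\nu}$ and $\mu_0$.

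Following this, I would introduce an auxiliary operator $\check{\mathcal{L}}_\varepsilon$ defined by \eqref{L_eps} with the same $\nu$ and $\mu_0$ but with the constant matrix $\check{\eta}\equiv\mathbf{1}_3$ in place of $\eta$, and its effective operator $\check{\mathcal{L}}^0$. Since $\check{\eta}$ is constant, Remark \ref{N=0}$(1^\circ)$ yields $N(\boldsymbol{\theta})\equiv 0$, so $\check{\mathcal{L}}_\varepsilon$ satisfies Condition \ref{cond1}. Consequently Theorems \ref{cos_thrm3a} and \ref{cos_thrm4} apply and give, for the pair $(\check{\mathcal{L}}_\varepsilon,\check{\mathcal{L}}^0)$, estimates of the form \eqref{cos_est3}--\eqref{sin_est4} on the full spaces $H^s(\mathbb{R}^3)\to L_2(\mathbb{R}^3)$.

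Next I would invoke Lemma \ref{lemma} for the auxiliary pair: the full-space estimates just obtained are, in each case, equivalent to the conjunction of the corresponding estimates on $J(\mu_0)$ and on $G(\mu_0)$; in particular, they imply the estimates on $G(\mu_0)$ for the pair $(\check{\mathcal{L}}_{G,\varepsilon},\check{\mathcal{L}}_G^0)$. By the observation of the first paragraph, $\check{\mathcal{L}}_{G,\varepsilon}=\mathcal{L}_{G,\varepsilon}$ and $\check{\mathcal{L}}_G^0=\mathcal{L}_G^0$, so these are precisely the inequalities claimed by the corollary. For the dependence of constants, one notes that the problem data \eqref{problem_data} for $\check{\mathcal{L}}_\varepsilon$ consists of $|\mu_0|,|\mu_0^{-1}|,\|\check{\eta}\|_{L_\infty}=\|\check{\eta}^{-1}\|_{L_\infty}=1,\|\nu\|_{L_\infty},\|\nu^{-1}\|_{L_\infty}$ and the lattice parameters, and this yields the stated control of $\check{C}_3$, $\check{C}_4$, $\check{\mathcal{C}}_3(s)$, $\check{\mathcal{C}}_4(s)$.

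I do not anticipate a serious obstacle: the argument is a clean assembly of Remark \ref{N=0}, Theorems \ref{cos_thrm3a}--\ref{cos_thrm4}, and Lemma \ref{lemma}. The one point that merits a brief verification is the identification $\check{\mathcal{L}}_{G,\varepsilon}=\mathcal{L}_{G,\varepsilon}$, which follows from comparing the quadratic forms on $G^1(\mu_0)$, where only the $\nu$-term contributes; and similarly $\check{\mathcal{L}}_G^0=\mathcal{L}_G^0$ because $\underline{\nu}$ and $\mu_0$ are unaffected by replacing $\eta$ with $\check{\eta}$.
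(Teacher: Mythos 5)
Your proposal is correct and follows essentially the same route as the paper: introduce the auxiliary operator $\check{\mathcal L}_\varepsilon$ with constant $\check\eta = \mathbf{1}_3$, observe via Remark~\ref{N=0}($1^\circ$) that Condition~\ref{cond1} holds, apply Theorems~\ref{cos_thrm3a} and~\ref{cos_thrm4}, and then project onto $G(\mu_0)$ using Lemma~\ref{lemma} together with the identification $\check{\mathcal L}_{G,\varepsilon}=\mathcal L_{G,\varepsilon}$, $\check{\mathcal L}_G^0=\mathcal L_G^0$. The paper states this proof more tersely but the argument is identical.
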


Now we abandon the condition $N(\boldsymbol{\theta})\equiv 0$, and instead assume that 
$N_0(\boldsymbol{\theta}) \equiv 0$. However, in this case we have to impose an additional condition  about the spectrum of the germ $S(\boldsymbol{\theta})$.

\begin{condition}
\label{cond2}
		$1^\circ$. The operator $N_0(\boldsymbol{\theta})$ is equal to zero{\rm :} $N_0(\boldsymbol{\theta}) = 0$ for any $\boldsymbol{\theta} \in \mathbb{S}^2$. This is equivalent to  
		$\mu_1(\boldsymbol{\theta}) = \mu_2(\boldsymbol{\theta}) = 0$ for any $\boldsymbol{\theta} \in \mathbb{S}^2$.
$2^\circ$. The branches of the eigenvalues $\gamma_1 (\boldsymbol{\theta})$ and $\gamma_2 (\boldsymbol{\theta})$ of  problem \eqref{solenoid_germ_spec_probl} either do not intersect or coincide identically.
	\end{condition}
	
Note that the intersection of the branch  
$\gamma_3 (\boldsymbol{\theta}) = \underline{\nu} \langle \mu_0 \boldsymbol{\theta}, \boldsymbol{\theta} \rangle$
(see \eqref{gamma3}) with the branches 
$\gamma_1 (\boldsymbol{\theta})$ and $\gamma_2 (\boldsymbol{\theta})$ is allowed.	
Under Condition \ref{cond2}, in the case where  $\gamma_1 (\boldsymbol{\theta})$ and  $\gamma_2 (\boldsymbol{\theta})$ do not intersect, we denote 
$$
c^\circ:= \min_{\boldsymbol{\theta}\in \mathbb{S}^2} |\gamma_1 (\boldsymbol{\theta}) - \gamma_2 (\boldsymbol{\theta})|.
$$
By Remark \ref{rem2_5}, if 
$\gamma_1 (\boldsymbol{\theta})$ and  $\gamma_2 (\boldsymbol{\theta})$ do not intersect, then $N_0(\boldsymbol{\theta}) \equiv 0$ and Condition \ref{cond2} is valid automatically.

The following result  is deduced from \cite[Theorem 15.2]{DSu4}.

	\begin{theorem}
		\label{cos_thrm3aa}
		Let $\mathcal{L}_\varepsilon$~be the operator~\emph{(\ref{L_eps})} and let $\mathcal{L}^0$~be the effective operator~\emph{(\ref{L0})}. Suppose that Condition \emph{\ref{cond2}} is satisfied.
		Then for $\tau \in \mathbb{R}$ and  $\varepsilon >0$ we have 
	 	\begin{equation}
	 	\label{cos_est6}
	 \bigl\| \cos( \tau \mathcal{L}_{\varepsilon}^{1/2})  - \cos( \tau (\mathcal{L}^0)^{1/2})  \bigr\|_{H^{3/2} (\mathbb{R}^3) \to L_2 (\mathbb{R}^3)} \le {C}_5 (1+|\tau|)^{1/2} \varepsilon,
	 	\end{equation}
	 	\begin{equation}
	 	\label{sin_est6}
\begin{aligned}
\bigl\|\mathcal{L}_\varepsilon^{-1/2} \sin( \tau \mathcal{L}_\varepsilon^{1/2})  - (\mathcal{L}^0)^{-1/2} \sin( \tau (\mathcal{L}^0)^{1/2})
\bigr\|_{H^{1/2} (\mathbb{R}^3) \to L_2 (\mathbb{R}^3)} 
\le {C}_6 (1+ |\tau| )^{1/2}  \varepsilon.
	\end{aligned}
	\end{equation}
The constants ${C}_5$ and ${C}_6$ depend on the problem data  \eqref{problem_data}
and also on the parameter $c^\circ$.
		\end{theorem}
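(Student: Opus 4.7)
The plan is to reduce, via Lemma \ref{lemma}, to separate estimates on the divergence-free and gradient subspaces, to handle the gradient part using Corollary \ref{corollary}, and to apply \cite[Theorem 15.2]{DSu4} to the divergence-free part, exploiting the fact that the Weyl decomposition simultaneously reduces $\mathcal{L}_\varepsilon$, $\mathcal{L}^0$, the germ $S(\boldsymbol{\theta})$, and the operator $N(\boldsymbol{\theta})$.

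First I would invoke Lemma \ref{lemma}, parts $1^\circ$ and $2^\circ$, to replace \eqref{cos_est6} and \eqref{sin_est6} by the corresponding pair of estimates on $J^{3/2} \to J$ and $J^{1/2} \to J$, together with their counterparts on $G^{3/2} \to G$ and $G^{1/2} \to G$. For the gradient subspace, Corollary \ref{corollary} already supplies the required bounds: $\mathcal{L}_{G,\varepsilon}$ depends only on $\nu$ and $\mu_0$, hence coincides with the gradient part of the auxiliary operator $\check{\mathcal L}_\varepsilon$ built with $\check{\eta} = \mathbf{1}_3$, which by Remark \ref{N=0}($1^\circ$) satisfies Condition \ref{cond1}, so Theorem \ref{cos_thrm3a} applies and gives precisely the $(1+|\tau|)^{1/2}\varepsilon$ bound.

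For the divergence-free part I would apply \cite[Theorem 15.2]{DSu4} to the fibered family $\mathcal{L}(\mathbf{k})\mathcal{P}(\mathbf{k};\mu_0)$. The decomposition \eqref{N_Weyl_decomp} splits the germ as $S(\boldsymbol{\theta}) = S_J(\boldsymbol{\theta}) \oplus S_G(\boldsymbol{\theta})$, with $\gamma_1(\boldsymbol{\theta}), \gamma_2(\boldsymbol{\theta})$ acting on $J_{\boldsymbol{\theta}}^0$ and $\gamma_3(\boldsymbol{\theta})$ on $G_{\boldsymbol{\theta}}^0$; moreover, by \eqref{N_operator} together with the identity $r(\boldsymbol{\theta})\boldsymbol{\theta} = 0$, the operator $N(\boldsymbol{\theta})$ vanishes on $G_{\boldsymbol{\theta}}^0$, so that its restriction $N_J(\boldsymbol{\theta})$ to $J_{\boldsymbol{\theta}}^0$ is the only piece that matters. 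Consequently, the abstract hypotheses of \cite[Theorem 15.2]{DSu4}, restricted to the $J$-part, reduce precisely to Condition \ref{cond2}: part $1^\circ$ kills the diagonal block $N_0$ on $J_{\boldsymbol{\theta}}^0$, and part $2^\circ$ supplies the needed non-crossing (or identical coincidence) of $\gamma_1, \gamma_2$ with the quantitative spectral gap $c^\circ$ entering the constants.

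The main obstacle is verifying that the abstract conditions of \cite[Theorem 15.2]{DSu4}, formulated in terms of the full germ on $\mathfrak{N}$, really do reduce in the Maxwell setting to the weaker hypothesis of Condition \ref{cond2}, in which only $\gamma_1$ and $\gamma_2$ are constrained while crossings of $\gamma_3$ with $\gamma_1, \gamma_2$ are explicitly permitted. This is precisely why the splitting into $J$- and $G$-parts is essential: on the $J$-part only $\gamma_1, \gamma_2$ and the restriction $S_J$ enter the abstract machinery, and crossings of $\gamma_3$ with them are harmless, since the corresponding eigenvector $\boldsymbol{\omega}_3(\boldsymbol{\theta})$ from \eqref{omega3} lies in the orthogonal invariant subspace $G(\mu_0)$, already dispatched by Corollary \ref{corollary}. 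Once this bookkeeping is carried out, the claimed dependence of $C_5$ and $C_6$ on the problem data and on $c^\circ$ follows directly from the general estimate.
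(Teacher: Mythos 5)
Your reduction to the $J$- and $G$-parts via Lemma~\ref{lemma}, and your treatment of the $G$-part through Corollary~\ref{corollary} (replacing $\eta$ by $\mathbf{1}_3$ so that Remark~\ref{N=0}$(1^\circ)$ and Theorem~\ref{cos_thrm3a} apply), coincide with the paper's argument. The gap is in the handling of the $J$-part. You propose to apply \cite[Theorem 15.2]{DSu4} ``to the fibered family $\mathcal{L}(\mathbf{k})\mathcal{P}(\mathbf{k};\mu_0)$'', i.e.\ directly to the restricted operator, and you assert that the abstract hypotheses of that theorem ``restricted to the $J$-part, reduce precisely to Condition~\ref{cond2}.'' But \cite[Theorem 15.2]{DSu4} is stated for full operators of the class $\mathcal{A}_\varepsilon = b(\mathbf{D})^* g(\mathbf{x}/\varepsilon)b(\mathbf{D})$ acting on all of $L_2(\mathbb{R}^d;\mathbb{C}^n)$, and its Condition~9.7 involves the spectrum of the \emph{full} germ $S(\boldsymbol{\theta})$ on $\mathfrak{N}$ (it requires $N_0\equiv 0$ together with constancy of the multiplicity of the germ's spectrum). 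There is no restricted version for parts of the operator in a reducing subspace, and under Condition~\ref{cond2} the branch $\gamma_3$ is explicitly allowed to cross $\gamma_1,\gamma_2$, so the full-germ multiplicity can jump. Your paragraph recognizing this as ``the main obstacle'' is the right instinct, but you do not actually overcome it; you declare that ``once this bookkeeping is carried out'' the result follows, which is precisely what needs an argument.

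The paper resolves this with a device you are missing: it introduces an auxiliary full operator $\widehat{\mathcal{L}}_\varepsilon$, with the same $\mu_0$ and $\eta(\mathbf{x})$ but with $\nu$ replaced by the constant $\widehat{\nu} = 2|\mu_0^{-1}|^2\|\eta^{-1}\|_{L_\infty}$. By \eqref{gammaj_est1}, the associated third branch $\widehat{\gamma}_3(\boldsymbol{\theta}) = \widehat{\nu}\langle \mu_0\boldsymbol{\theta},\boldsymbol{\theta}\rangle \ge 2|\mu_0^{-1}|\,\|\eta^{-1}\|_{L_\infty}$ lies strictly above $\gamma_1(\boldsymbol{\theta}),\gamma_2(\boldsymbol{\theta})\le |\mu_0^{-1}|\,\|\eta^{-1}\|_{L_\infty}$, so it never crosses them; combined with Condition~\ref{cond2} this guarantees Condition~9.7 of \cite{DSu4} for the \emph{full} germ of $\widehat{\mathcal{L}}_\varepsilon$. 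Then \cite[Theorem 15.2]{DSu4} applies legitimately to $\widehat{\mathcal{L}}_\varepsilon$, and since $\mathcal{L}_{J,\varepsilon} = \widehat{\mathcal{L}}_{J,\varepsilon}$ (the $J$-part depends only on $\mu_0,\eta$), Lemma~\ref{lemma} applied to $\widehat{\mathcal{L}}_\varepsilon$ yields exactly \eqref{312}, \eqref{313}. Your proposal would become correct if you inserted this auxiliary-operator step in place of the unjustified direct application of \cite[Theorem 15.2]{DSu4} to the restricted family.
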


\begin{proof}
By Lemma \ref{lemma}, the required estimates  \eqref{cos_est6} and \eqref{sin_est6} are equivalent to similar estimates for the divergence-free and the gradient parts of the operator
${\mathcal L}_\varepsilon$. According to Corollary \ref{corollary}, these estimates are valid for the gradient part. 
So, the problem is reduced to the proof of the following estimates:
		\begin{gather}
\label{312}
		\bigl\| \cos( \tau \mathcal{L}_{J,\varepsilon}^{1/2})  - \cos( \tau (\mathcal{L}^0_J)^{1/2}) \bigr\|_{J^{3/2} \to J} \le {C}_5  (1+|\tau|)^{1/2} \varepsilon,
		\\
\label{313}
	\bigl\|\mathcal{L}_{J,\varepsilon}^{-1/2} \sin( \tau \mathcal{L}_{J,\varepsilon}^{1/2})  - (\mathcal{L}_J^0)^{-1/2} \sin( \tau (\mathcal{L}_J^0)^{1/2}) \bigr\|_{J^{1/2} \to J} \le {C}_6 (1+  |\tau| )^{1/2}  \varepsilon.
		\end{gather}

Consider the operator $\widehat{\mathcal L}_\varepsilon$ with the initial coefficients $\mu_0$, $\eta(\mathbf{x})$ and the constant coefficient $\widehat{\nu} = 2 |\mu_0^{-1}|^2 \|\eta^{-1}\|_{L_\infty}$. By \eqref{gammaj_est1}, such a choice  of the coefficient  $\widehat{\nu}$ ensures that the branch  
$\widehat{\gamma}_3(\boldsymbol{\theta})= \widehat{\nu} \langle \mu_0 \boldsymbol{\theta},\boldsymbol{\theta}\rangle$ does not intersect with $\gamma_1(\boldsymbol{\theta})$ and $\gamma_2(\boldsymbol{\theta})$.
Together with Condition \ref{cond2}, this ensures that Condition  9.7 from \cite{DSu4} is satisfied
(this condition means that  $N_0(\boldsymbol{\theta}) \equiv 0$ and the multiplicity of the spectrum of the germ $S(\boldsymbol{\theta})$ does not depend on $\boldsymbol{\theta}$). Then, by Theorem 15.2 from  \cite{DSu4}, 
estimates of the form \eqref{cos_est6}, \eqref{sin_est6}
are valid for the operator $\widehat{\mathcal L}_\varepsilon$.
Applying Lemma \ref{lemma} and taking into account that the divergence-free parts of the operators ${\mathcal L}_\varepsilon$ and $\widehat{\mathcal L}_\varepsilon$ coincide, we arrive at the required estimates \eqref{312}, \eqref{313}.
\end{proof}

By interpolation, we obtain the following result (it is deduced from \cite[Corollary 15.4]{DSu4} by analogy with the proof of Theorem \ref{cos_thrm3aa}).

\begin{theorem}
		\label{cos_thrm6}
 	Suppose that the assumptions of Theorem \emph{\ref{cos_thrm3aa}} are satisfied.
 	Then for \hbox{$0 \le s \le 3/2$}, $\tau \in \mathbb{R}$, and $\varepsilon >0$ we have 
 	 	\begin{equation*}
	 \bigl\| \cos( \tau \mathcal{L}_{\varepsilon}^{1/2})  - \cos( \tau (\mathcal{L}^0)^{1/2}) \bigr\|_{H^s (\mathbb{R}^3) \to L_2 (\mathbb{R}^3)} \le \mathcal{C}_5(s) (1+|\tau|)^{s/3} \varepsilon^{2s/3},
	 	\end{equation*}
	 	\begin{equation*}
	 	\begin{split}
	 \bigl\|\mathcal{L}_\varepsilon^{-1/2} \sin( \tau \mathcal{L}_\varepsilon^{1/2}) D_j  - (\mathcal{L}^0)^{-1/2} \sin( \tau (\mathcal{L}^0)^{1/2}) D_j \bigr\|_{H^s (\mathbb{R}^3) \to L_2 (\mathbb{R}^3)}
 \le \mathcal{C}_6 (s) (1+ |\tau|)^{s/3}  \varepsilon^{2s/3},
	 	\end{split}
	 	\end{equation*}
 $j=1,2,3$. The constants $\mathcal{C}_5(s)$ and $\mathcal{C}_6(s)$ depend on the problem data  \eqref{problem_data},  on $s$, and on the parameter~$c^\circ$. 
	  \end{theorem}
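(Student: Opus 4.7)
The plan is to follow the same strategy as the proof of Theorem \ref{cos_thrm3aa}, but now to use the interpolated version of the underlying result from \cite{DSu4} rather than its endpoint version. First I would apply Lemma \ref{lemma} to reduce each of the two desired inequalities on $L_2(\mathbb R^3;\mathbb C^3)$ to a pair of inequalities: one for the divergence-free parts $\mathcal L_{J,\varepsilon}$, $\mathcal L_J^0$ on $J(\mu_0)$, and one for the gradient parts $\mathcal L_{G,\varepsilon}$, $\mathcal L_G^0$ on $G(\mu_0)$. This splits the task cleanly because $\mathcal L_\varepsilon$ and $\mathcal L^0$ are simultaneously reduced by the Weyl decomposition.

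The estimates for the gradient parts are furnished directly by the second half of Corollary \ref{corollary}, which already provides bounds of the required form $(1+|\tau|)^{s/3}\varepsilon^{2s/3}$ for the whole range $0\le s \le 3/2$. The constants there depend only on $|\mu_0|$, $|\mu_0^{-1}|$, $\|\nu\|_{L_\infty}$, $\|\nu^{-1}\|_{L_\infty}$ and the lattice parameters, which is consistent with the claimed dependence on the problem data \eqref{problem_data}.

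For the divergence-free parts, I would reuse the auxiliary-operator trick from the proof of Theorem \ref{cos_thrm3aa}. Introduce $\widehat{\mathcal L}_\varepsilon$ with coefficients $\mu_0$, $\eta(\mathbf x)$, and the constant scalar $\widehat\nu = 2|\mu_0^{-1}|^2\|\eta^{-1}\|_{L_\infty}$. By \eqref{gammaj_est1} this choice separates the branch $\widehat\gamma_3(\boldsymbol\theta)$ uniformly from $\gamma_1(\boldsymbol\theta)$ and $\gamma_2(\boldsymbol\theta)$; combined with Condition \ref{cond2} this yields Condition~9.7 of \cite{DSu4} (namely $N_0(\boldsymbol\theta)\equiv 0$ and constant spectral multiplicity of the germ) for $\widehat{\mathcal L}_\varepsilon$. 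Applying Corollary~15.4 of \cite{DSu4} to $\widehat{\mathcal L}_\varepsilon$ then produces the two interpolated estimates in precisely the form required, for the entire range $0\le s\le 3/2$. A final invocation of Lemma \ref{lemma}, together with the observation that $\mathcal L_{J,\varepsilon}$ and the divergence-free part of $\widehat{\mathcal L}_\varepsilon$ coincide (neither depends on $\nu$), transfers these bounds to $\mathcal L_{J,\varepsilon}$ and $\mathcal L_J^0$.

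I do not expect any genuinely difficult step; the proof is essentially a careful assembly of ingredients already at hand. The main bookkeeping task is tracking the dependence of constants: combining the gradient-part constants (governed by $\mu_0$ and $\nu$) with the divergence-free-part constants (governed by $\mu_0$, $\eta$, and the spectral gap $c^\circ$ entering through Condition~9.7 of \cite{DSu4}) gives dependence on the full problem data \eqref{problem_data}, on $s$, and on $c^\circ$, as stated. The mild subtlety is that the auxiliary constant $\widehat\nu$ itself depends on $\mu_0$ and $\|\eta^{-1}\|_{L_\infty}$, so the corresponding gap between $\widehat\gamma_3$ and $\gamma_1,\gamma_2$ is controlled in terms of the problem data and $c^\circ$ alone, keeping the final constant dependence under control.
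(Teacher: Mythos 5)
Your proposal is correct and coincides with the paper's own approach: the paper deduces this theorem from \cite[Corollary 15.4]{DSu4} by repeating, mutatis mutandis, the proof of Theorem \ref{cos_thrm3aa} — namely, splitting via Lemma \ref{lemma}, handling the gradient part with Corollary \ref{corollary}, and handling the divergence-free part via the auxiliary operator $\widehat{\mathcal L}_\varepsilon$ with $\widehat\nu = 2|\mu_0^{-1}|^2\|\eta^{-1}\|_{L_\infty}$ (which yields Condition 9.7 of \cite{DSu4}), exactly as you describe. Your observation on the constant dependence (problem data, $s$, and $c^\circ$, with $\widehat\nu$ controlled by the problem data) is also the correct bookkeeping.
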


\subsection{Approximation for the operator $\mathcal{L}_{\varepsilon}^{-1/2} \sin( \tau \mathcal{L}_{\varepsilon}^{1/2})$  in the energy norm}

Approximation for the operator-valued function $\mathcal{L}_{\varepsilon}^{-1/2} \sin( \tau \mathcal{L}_{\varepsilon}^{1/2})$ in the  ``energy'' norm  (i.~e., the norm of operators acting from $H^s$ to $H^1$) follows from the results of  \cite{M2}, where the general class of the operators ${\mathcal A}_\varepsilon$ was considered.
In this approximation, a corrector is taken into account. In the general case, the corrector involves an auxiliary smoothing operator. However, under the additional assumption that the solution 
$\Lambda({\mathbf x})$ of problem \eqref{equation_for_Lambda} is a multiplier from $H^{2}$ to $H^1$,
we can get rid of the smoothing operator. In dimension $d\le 4$, this condition holds automatically.  
We are also interested in approximation of the so called  ``flux'', i.~e., of the operator 
$g^\varepsilon b({\mathbf D}) \mathcal{L}_\varepsilon^{-1/2} \sin( \tau \mathcal{L}_\varepsilon^{1/2})$ in the $(H^s \to L_2)$-norm.
From \cite[Theorem 9.8]{M2} we deduce the following result.

  \begin{theorem}	
  \label{th1_corrector}
		Let $\mathcal{L}_\varepsilon$~be the operator~\emph{(\ref{L_eps})}, and let $\mathcal{L}^0$~be the effective operator~\emph{(\ref{L0})}. 
		Then for $\tau \in \mathbb{R}$ and $0< \varepsilon \le 1$ we have 
\begin{equation*}
\begin{aligned}
\bigl\|\mathcal{L}_\varepsilon^{-1/2} \sin( \tau \mathcal{L}_\varepsilon^{1/2})  - 
\bigl(I + \varepsilon  \Lambda^\varepsilon  b({\mathbf D}) \bigr) (\mathcal{L}^0)^{-1/2} \sin( \tau (\mathcal{L}^0)^{1/2})
\bigr\|_{H^{2} (\mathbb{R}^3) \to H^1 (\mathbb{R}^3)} 
 \le {C}_7 (1+ |\tau| )  \varepsilon,
	\end{aligned}
\end{equation*}
\begin{equation*}
\begin{aligned}
 \bigl\| g^\varepsilon  b({\mathbf D}) \mathcal{L}_\varepsilon^{-1/2} \sin( \tau \mathcal{L}_\varepsilon^{1/2})  - 
\widetilde{g}^\varepsilon  b({\mathbf D}) (\mathcal{L}^0)^{-1/2} \sin( \tau (\mathcal{L}^0)^{1/2})
 \bigr\|_{H^{2} (\mathbb{R}^3) \to L_2 (\mathbb{R}^3)} 
\le {C}_8 (1+ |\tau| )  \varepsilon.
	\end{aligned}
\end{equation*}	
The constants ${C}_7$ and ${C}_8$ depend only on the problem data \eqref{problem_data}.
  \end{theorem}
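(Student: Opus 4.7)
The plan is to deduce Theorem \ref{th1_corrector} directly from the general result \cite[Theorem 9.8]{M2} about the class of operators $\mathcal{A}_\varepsilon = b({\mathbf D})^* g^\varepsilon({\mathbf x}) b({\mathbf D})$, specializing it to our operator $\mathcal{L}_\varepsilon$. The preliminary check that $\mathcal{L}_\varepsilon$ is a particular case of $\mathcal{A}_\varepsilon$ was already carried out in Subsection \ref{Subsection Operator L}, and the required quantitative hypotheses (positivity and boundedness of $g$, the rank condition for $b({\boldsymbol\xi})$ encoded in \eqref{DSu1}) hold with constants controlled in terms of the problem data \eqref{problem_data}, via the explicit formulas
$$
\|g\|_{L_\infty}=\max\{\|\eta^{-1}\|_{L_\infty},\|\nu\|_{L_\infty}\},\quad
\|g^{-1}\|_{L_\infty}=\max\{\|\eta\|_{L_\infty},\|\nu^{-1}\|_{L_\infty}\},
$$
together with the expressions for $\alpha_0,\alpha_1$ in terms of $|\mu_0|,|\mu_0^{-1}|$.

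Applied to $\mathcal{L}_\varepsilon$, the general theorem yields bounds of exactly the claimed form, except that the corrector in the first inequality has the shape $\varepsilon\Lambda^\varepsilon b({\mathbf D}) S_\varepsilon$ and the flux approximation in the second contains $\widetilde g^\varepsilon b({\mathbf D}) S_\varepsilon$, where $S_\varepsilon$ is an auxiliary Steklov-type smoothing operator introduced in \cite{M2} to absorb the low regularity of $\Lambda^\varepsilon$. To obtain the form stated in Theorem \ref{th1_corrector}, I would remove the smoother. The general mechanism, also from \cite{M2}, is: if the $\Gamma$-periodic solution $\Lambda({\mathbf x})$ of problem \eqref{equation_for_Lambda} is a pointwise multiplier from $H^2({\mathbb R}^3)$ to $H^1({\mathbb R}^3)$, then one has bounds of the form $\|\Lambda^\varepsilon b({\mathbf D})(I-S_\varepsilon) u\|_{H^1}\le C\varepsilon\|u\|_{H^2}$ and similarly for $\widetilde g^\varepsilon b({\mathbf D})(I-S_\varepsilon)$ in the $L_2$-norm, which allows deleting $S_\varepsilon$ without worsening the order in $\varepsilon$. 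In dimension $d\le 4$ — and in particular in our case $d=3$ — this multiplier property holds automatically, thanks to the Sobolev embedding $H^2({\mathbb R}^3)\hookrightarrow L_\infty({\mathbb R}^3)$ combined with the standard inequality $\|\Lambda v\|_{H^1(\Omega)}\le C\bigl(\|\Lambda\|_{\widetilde H^1(\Omega)}+\|\Lambda\|_{L_\infty(\Omega)}\bigr)\|v\|_{H^2({\mathbb R}^3)}$.

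The main obstacle, from my point of view, is not analytic but book-keeping: one must verify that the constants in the multiplier estimate for $\Lambda$ are controlled solely in terms of the problem data \eqref{problem_data}. This reduces to uniform bounds on $\|\Lambda\|_{L_\infty(\Omega)}$ and $\|\Lambda\|_{\widetilde H^1(\Omega)}$ coming from elliptic regularity for the cell problem \eqref{equation_for_Lambda} — an argument that does not involve $\tau$ or $\varepsilon$, and that, given the block structure of $\Lambda$ described in Subsection \ref{sec_effective}, decouples into separate bounds on $\Psi$ (via the cell problem for ${\mathbf p}_j$) and on $\nabla\rho$ (via the scalar Poisson-type cell problem for $\rho$). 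Once that is in place, combining the general theorem of \cite{M2} with the removal of $S_\varepsilon$ yields both estimates of Theorem \ref{th1_corrector}.
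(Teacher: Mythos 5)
Your overall route matches the paper's: specialize \cite[Theorem~9.8]{M2} to $\mathcal{L}_\varepsilon$, then delete the auxiliary smoothing operator that appears in the corrector by invoking the fact that $\Lambda$ is a multiplier from $H^2(\mathbb{R}^3)$ to $H^1(\mathbb{R}^3)$, which holds automatically for $d\le 4$. That is exactly what the paper does (it gives no further detail and simply cites \cite{M2}).

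However, your sketched justification of the multiplier property contains a genuine error. You write the bound
$\|\Lambda v\|_{H^1(\Omega)} \le C\bigl(\|\Lambda\|_{\widetilde H^1(\Omega)}+\|\Lambda\|_{L_\infty(\Omega)}\bigr)\|v\|_{H^2(\mathbb{R}^3)}$
and treat $\|\Lambda\|_{L_\infty(\Omega)}$ as a quantity controlled by the problem data. But the periodic solution $\Lambda$ of \eqref{equation_for_Lambda} lies only in $\widetilde H^1(\Omega)$, and for $d=3$ membership in $\widetilde H^1$ gives $\Lambda\in L_6(\Omega)$, not $L_\infty(\Omega)$; there is no De Giorgi--Nash--Moser bound available here because the cell problem is a system, and the explicit block decomposition of $\Lambda$ in Subsection~\ref{sec_effective} only yields $\operatorname{curl}\mathbf p_j,\,\nabla\rho\in H^1_{\rm loc}\hookrightarrow L_6$. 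So the right-hand side of your inequality is not known to be finite, and the book-keeping you describe (``uniform bounds on $\|\Lambda\|_{L_\infty(\Omega)}$'') cannot be carried out. The correct mechanism for $d\le 4$, as used in \cite{BSu3} and \cite{M2}, is H\"older's inequality with $\Lambda\in L_{2d/(d-2)}(\Omega)$ from Sobolev embedding of $\widetilde H^1(\Omega)$, paired with $v\in L_\infty(\mathbb{R}^3)$ and $\nabla v\in L_3(\mathbb{R}^3)$ coming from $v\in H^2(\mathbb{R}^3)$; none of this requires $\Lambda\in L_\infty$. If you replace your argument with that Sobolev/H\"older reasoning, the rest of your proposal (reduction to $\mathcal{L}_\varepsilon=b(\mathbf D)^*g^\varepsilon b(\mathbf D)$, verification of \eqref{DSu1} with constants controlled by \eqref{problem_data}, removal of the smoother) goes through and reproduces the paper's proof.
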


We have 
$$
\Lambda^\varepsilon b({\mathbf D}) = \mu_0^{-1/2} \Psi^\varepsilon \operatorname{curl} \mu_0^{-1/2} + \mu_0^{1/2} 
(\nabla \rho)^\varepsilon \operatorname{div} \mu_0^{1/2}.
$$
Obviously, the first term is equal to zero on $G(\mu_0)$, and the second is equal to zero on $J(\mu_0)$.
Next,
$$
g^\varepsilon  b({\mathbf D}) = -i \begin{pmatrix} (\eta^\varepsilon)^{-1} \operatorname{curl} \mu_0^{-1/2} \\ {\nu}^\varepsilon \operatorname{div} \mu_0^{1/2} \end{pmatrix},
\quad
\widetilde{g}^\varepsilon b({\mathbf D}) = -i \begin{pmatrix} \bigl((\eta^0)^{-1} + \Sigma^\varepsilon \bigr) \operatorname{curl} \mu_0^{-1/2} \\ \underline{\nu}\, \operatorname{div} \mu_0^{1/2} \end{pmatrix}.
$$
Using these relations, it is easy to check the following analog of Lemma~\ref{lemma}.

\begin{lemma}
\label{lemma2}
$1^\circ$. The estimate
\begin{equation*}
	\bigl\| \mathcal{L}_{\varepsilon}^{-1/2} \sin( \tau \mathcal{L}_{\varepsilon}^{1/2})  - 
	\bigl(I + \varepsilon  \Lambda^\varepsilon b({\mathbf D}) \bigr) (\mathcal{L}^0)^{-1/2} \sin( \tau (\mathcal{L}^0)^{1/2})  \bigr\|_{H^s (\mathbb{R}^3) \to H^1 (\mathbb{R}^3)} \le \mathcal{C}(\tau) \varepsilon^{\sigma}
\end{equation*}
with some $s\ge 0$ and $\sigma \ge 0$ is equivalent to the pair of inequalities
\begin{align*}
	\bigl\| \mathcal{L}_{J,\varepsilon}^{-1/2} \sin( \tau \mathcal{L}_{J,\varepsilon}^{1/2})  - 
		\bigl(I + \varepsilon \mu_0^{-1/2} \Psi^\varepsilon  \operatorname{curl} \mu_0^{-1/2} \bigr) (\mathcal{L}_J^0)^{-1/2} \sin( \tau (\mathcal{L}_J^0)^{1/2})  \bigr\|_{J^s \to H^1} \le \mathcal{C}(\tau) \varepsilon^{\sigma},
\\
	\bigl\| \mathcal{L}_{G,\varepsilon}^{-1/2} \sin( \tau \mathcal{L}_{G,\varepsilon}^{1/2})  -
	\bigl(I +\varepsilon \mu_0^{1/2} (\nabla \rho)^\varepsilon  \operatorname{div} \mu_0^{1/2} \bigr) (\mathcal{L}_G^0)^{-1/2} \sin( \tau (\mathcal{L}_G^0)^{1/2})  \bigr\|_{G^s \to H^1} \le \mathcal{C}(\tau) \varepsilon^{\sigma}.
\end{align*}
$2^\circ$. The estimate 
\begin{equation*}
	\bigl\| g^\varepsilon  b({\mathbf D} )\mathcal{L}_{\varepsilon}^{-1/2} \sin( \tau \mathcal{L}_{\varepsilon}^{1/2})  - 
		\widetilde{g}^\varepsilon  b({\mathbf D}) (\mathcal{L}^0)^{-1/2} \sin( \tau (\mathcal{L}^0)^{1/2})  \bigr\|_{H^s (\mathbb{R}^3) \to L_2 (\mathbb{R}^3)} \le \mathcal{C}(\tau) \varepsilon^{\sigma}
\end{equation*}
with some $s\ge 0$ and $\sigma \ge 0$ is equivalent to the pair of inequalities
\begin{align*}
		\begin{split}
		&\bigl\| (\eta^\varepsilon)^{-1} \operatorname{curl}  \mu_0^{-1/2} \mathcal{L}_{J,\varepsilon}^{-1/2} \sin( \tau \mathcal{L}_{J,\varepsilon}^{1/2})  - 
		\bigl((\eta^0)^{-1} + \Sigma^\varepsilon \bigr) \operatorname{curl} \mu_0^{-1/2} (\mathcal{L}_J^0)^{-1/2} \sin( \tau (\mathcal{L}_J^0)^{1/2})  \bigr\|_{J^s \to L_2}
		 \\
		 &\qquad \le \mathcal{C}(\tau) \varepsilon^{\sigma},
		\end{split}
\\
		&\bigl\| \nu^\varepsilon \operatorname{div} \mu_0^{1/2} \mathcal{L}_{G,\varepsilon}^{-1/2} \sin( \tau \mathcal{L}_{G,\varepsilon}^{1/2})  -
	\underline{\nu} \, \operatorname{div} \mu_0^{1/2} (\mathcal{L}_G^0)^{-1/2} \sin( \tau (\mathcal{L}_G^0)^{1/2})  \bigr\|_{G^s \to L_2} \le \mathcal{C}(\tau) \varepsilon^{\sigma}.
\end{align*}
\end{lemma}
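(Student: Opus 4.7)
The plan is to follow the same scheme as Lemma~\ref{lemma}: exploit that $\mathcal{L}_\varepsilon$ and $\mathcal{L}^0$ are simultaneously reduced by the Weyl decomposition $L_2({\mathbb R}^3;{\mathbb C}^3) = J(\mu_0) \oplus G(\mu_0)$, and that, by Remark~\ref{PJ}, this induces orthogonal splittings $H^s = J^s(\mu_0) \oplus G^s(\mu_0)$ (and similarly $H^1 = J^1(\mu_0) \oplus G^1(\mu_0)$). The operator-valued functions $\mathcal{L}_\varepsilon^{-1/2} \sin(\tau \mathcal{L}_\varepsilon^{1/2})$ and $(\mathcal{L}^0)^{-1/2} \sin(\tau (\mathcal{L}^0)^{1/2})$ are then reduced by these decompositions as well.

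The key structural observation for part $1^\circ$ is that the two summands in
\begin{equation*}
\Lambda^\varepsilon b({\mathbf D}) = \mu_0^{-1/2}\Psi^\varepsilon \operatorname{curl}\mu_0^{-1/2} + \mu_0^{1/2}(\nabla \rho)^\varepsilon \operatorname{div}\mu_0^{1/2}
\end{equation*}
act alternately on the two Weyl summands: for ${\mathbf v} \in J(\mu_0)$ one has $\operatorname{div}\mu_0^{1/2}{\mathbf v}=0$, so only the first term survives; for ${\mathbf v} = \mu_0^{1/2}\nabla\phi \in G(\mu_0)$ one has $\operatorname{curl}\mu_0^{-1/2}{\mathbf v} = \operatorname{curl}\nabla\phi = 0$, so only the second term survives. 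Since $(\mathcal{L}^0)^{-1/2}\sin(\tau(\mathcal{L}^0)^{1/2})$ sends $J^s$ into $J$ and $G^s$ into $G$, the composite corrected operator restricted to ${\mathbf u} \in J^s$ coincides with the $J$-half of the statement, and similarly for inputs in $G^s$. The forward direction of the equivalence is then immediate by restricting the hypothesized bound to ${\mathbf u} \in J^s$ or ${\mathbf u} \in G^s$. For the reverse direction, I would decompose ${\mathbf u} = \mathcal{P}(\mu_0){\mathbf u} + (I-\mathcal{P}(\mu_0)){\mathbf u}$ orthogonally in $H^s$, apply the two half-estimates separately, and recombine via the triangle inequality in $H^1$; orthogonality of the splitting yields $\|\mathcal{P}(\mu_0){\mathbf u}\|_{H^s}, \|(I-\mathcal{P}(\mu_0)){\mathbf u}\|_{H^s} \le \|{\mathbf u}\|_{H^s}$, so the constant $\mathcal{C}(\tau)\varepsilon^\sigma$ propagates up to a harmless universal factor.

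Part $2^\circ$ is handled identically, using the block column forms of $g^\varepsilon b({\mathbf D})$ and $\widetilde{g}^\varepsilon b({\mathbf D})$ displayed just before the lemma: the last (divergence) component vanishes on inputs in $J(\mu_0)$, and the first three (curl) components vanish on inputs in $G(\mu_0)$. A minor convenience here is that the $J$- and $G$-contributions to the output occupy disjoint component blocks of the target ${\mathbb C}^4$-valued vector, so the $L_2$-norm actually splits by a clean pythagorean identity instead of a triangle inequality. The only subtle point in both parts is that the corrected operator $(I + \varepsilon \Lambda^\varepsilon b({\mathbf D}))(\mathcal{L}^0)^{-1/2}\sin(\tau(\mathcal{L}^0)^{1/2})$ is itself not reduced by the decomposition $J \oplus G$; however, it is a composition in which the effective evolution is reduced and the corrector sits on the outside, so it does respect the splitting of the input. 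I anticipate no serious analytic obstacle; the proof is purely structural and relies only on Remark~\ref{PJ} together with the explicit form of $\Lambda({\mathbf x})$ and $\widetilde{g}({\mathbf x})$ recorded in Subsection~\ref{sec_effective}.
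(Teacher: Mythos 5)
Your proof is correct and follows precisely the route the paper intends — the paper itself gives no detailed argument, saying only "Using these relations, it is easy to check the following analog of Lemma \ref{lemma}," and your write-up supplies exactly the missing details. Your key observations (that $\Lambda^\varepsilon b(\mathbf{D})$ acts as $\mu_0^{-1/2}\Psi^\varepsilon\operatorname{curl}\mu_0^{-1/2}$ on $J(\mu_0)$ and as $\mu_0^{1/2}(\nabla\rho)^\varepsilon\operatorname{div}\mu_0^{1/2}$ on $G(\mu_0)$, that the effective sine-propagator is reduced by the Weyl decomposition, and that in $2^\circ$ the $J$- and $G$-outputs live in complementary component blocks of $\mathbb{C}^4$) are all exactly what the displayed formulas before the lemma are there to record. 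You are also right to flag that the corrected operator in $1^\circ$ is not itself reduced by $J\oplus G$ in the target, so the recombination there is via triangle inequality rather than a Pythagorean split; the resulting harmless factor (at most $\sqrt 2$, coming from $\|\mathbf{u}_J\|_{H^s}+\|\mathbf{u}_G\|_{H^s}\le\sqrt 2\,\|\mathbf{u}\|_{H^s}$) is invisible in all of the paper's subsequent applications, which only track the asymptotic order in $\varepsilon$ and $\tau$, and your explicit acknowledgment of it is in fact slightly more careful than the bare "equivalent" wording of the lemma.
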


Next, under some additional assumptions (for instance, under Condition \ref{cond1}), the results of Theorem
\ref{th1_corrector} can be improved; see \cite{DSu4}. Now, in order to remove the smoothing operator in the corrector, it suffices to assume that the solution 
$\Lambda({\mathbf x})$ of problem \eqref{equation_for_Lambda} is a multiplier from $H^{3/2}$ to $H^1$.
In dimension $d\le 3$, this condition is valid automatically (see \cite[Proposition 14.25]{DSu4}). 
From \cite[Theorem 15.36]{DSu4} we obtain the following result.
  
 \begin{theorem}	
  \label{th2_corrector}
		Let $\mathcal{L}_\varepsilon$~be the operator~\emph{(\ref{L_eps})}, and let $\mathcal{L}^0$~be the effective operator~\emph{(\ref{L0})}. Suppose that Condition \emph{\ref{cond1}} is satisfied. 
		Then for $\tau \in \mathbb{R}$ and $0< \varepsilon \le 1$ we have 
\begin{equation*}
\begin{aligned}
\bigl\|\mathcal{L}_\varepsilon^{-1/2} \sin( \tau \mathcal{L}_\varepsilon^{1/2})  - 
\bigl( I + \varepsilon \Lambda^\varepsilon  b({\mathbf D}) \bigr) (\mathcal{L}^0)^{-1/2} \sin( \tau (\mathcal{L}^0)^{1/2})
\bigr\|_{H^{3/2} (\mathbb{R}^3) \to H^1 (\mathbb{R}^3)} 
\le {C}_9 (1+ |\tau| )^{1/2}  \varepsilon,
	\end{aligned}
\end{equation*}
\begin{equation*}
\begin{aligned}
\bigl\| g^\varepsilon b({\mathbf D}) \mathcal{L}_\varepsilon^{-1/2} \sin( \tau \mathcal{L}_\varepsilon^{1/2})  - 
\widetilde{g}^\varepsilon  b({\mathbf D}) (\mathcal{L}^0)^{-1/2} \sin( \tau (\mathcal{L}^0)^{1/2})
\bigr\|_{H^{3/2} (\mathbb{R}^3) \to L_2 (\mathbb{R}^3)} 
 \le {C}_{10} (1+ |\tau| )^{1/2}  \varepsilon.
	\end{aligned}
\end{equation*}	
The constants ${C}_9$ and ${C}_{10}$ depend only on the problem data \eqref{problem_data}.
  \end{theorem}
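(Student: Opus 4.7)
The plan is to deduce this theorem from the general abstract result \cite[Theorem 15.36]{DSu4} applied to the class of operators $\mathcal{A}_\varepsilon$, exactly in the spirit of Theorems \ref{cos_thrm1}--\ref{cos_thrm3a}. Recall that $\mathcal{L}_\varepsilon$ was already identified in Subsection~\ref{Subsection Operator L} as a particular case of $\mathcal{A}_\varepsilon = b(\mathbf{D})^* g^\varepsilon b(\mathbf{D})$, with $m=4$, $n=3$, and the symbol $b(\boldsymbol{\xi})$ of maximal rank satisfying \eqref{DSu1}. Hence the hypotheses of the general operator framework are available for free, and the entire proof will consist of verifying the \emph{additional} hypotheses of the abstract theorem and tracing the dependence of the constants.

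The abstract theorem requires two extra ingredients: first, a spectral condition at the bottom of the spectrum which in our notation is precisely \emph{Condition \ref{cond1}} ($N(\boldsymbol{\theta})\equiv 0$); second, that the $\Gamma$-periodic solution $\Lambda$ of problem \eqref{equation_for_Lambda} be a pointwise multiplier from $H^{3/2}(\mathbb{R}^3;\mathbb{C}^4)$ into $H^1(\mathbb{R}^3;\mathbb{C}^3)$ (this is what allows one to drop the auxiliary smoothing operator that appears in the corrector of the abstract result in the general case). The first ingredient is assumed. For the second, I invoke \cite[Proposition 14.25]{DSu4}, which states that in dimension $d \le 3$ any such $\Lambda$ is automatically a multiplier from $H^{3/2}$ to $H^1$; since our spatial dimension is exactly $d=3$, this ingredient is therefore free of charge.

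Given these two points, \cite[Theorem 15.36]{DSu4} applies directly and produces the two asserted inequalities with corrector $\varepsilon\Lambda^\varepsilon b(\mathbf{D})$ and effective flux $\widetilde{g}^\varepsilon b(\mathbf{D})$, where $\Lambda$ and $\widetilde{g}$ are exactly the objects defined in \eqref{equation_for_Lambda}--\eqref{g_tilde}, computed explicitly for the Maxwell operator in Subsection~\ref{sec_effective}. It remains to verify that the constants ${C}_9$ and ${C}_{10}$ depend only on the problem data \eqref{problem_data}: this is a matter of bookkeeping, because the abstract constants are controlled by $\alpha_0$, $\alpha_1$, $\|g\|_{L_\infty}$, $\|g^{-1}\|_{L_\infty}$ and the lattice parameters, which by the formulas after \eqref{DSu1} and the identities $\|g\|_{L_\infty} = \max\{\|\eta^{-1}\|_{L_\infty}, \|\nu\|_{L_\infty}\}$, $\|g^{-1}\|_{L_\infty} = \max\{\|\eta\|_{L_\infty}, \|\nu^{-1}\|_{L_\infty}\}$ are themselves controlled by \eqref{problem_data}.

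The part I expect to require the most care is the third step: making sure that the multiplier bound for $\Lambda$ furnished by \cite[Proposition 14.25]{DSu4} is controlled quantitatively by the problem data (so that the final constants do not pick up an uncontrolled factor), and that this bound interacts correctly with the abstract corrector estimate to yield the clean form stated above. Everything else is a straightforward translation: the abstract theorem gives exactly the form $(I+\varepsilon\Lambda^\varepsilon b(\mathbf{D}))$ in the first inequality and $\widetilde{g}^\varepsilon b(\mathbf{D})$ in the second, with the linear-in-$\tau$ factor replaced by $(1+|\tau|)^{1/2}$ precisely because Condition \ref{cond1} holds; this is the same improvement already exploited in Theorem \ref{cos_thrm3a}.
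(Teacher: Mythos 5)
Your proof is correct and follows essentially the same route as the paper: the paper deduces Theorem~\ref{th2_corrector} directly from \cite[Theorem~15.36]{DSu4}, noting that Condition~\ref{cond1} supplies the improved $(1+|\tau|)^{1/2}$ dependence and that in dimension $d\le 3$ the solution $\Lambda$ of problem~\eqref{equation_for_Lambda} is automatically a multiplier from $H^{3/2}$ to $H^1$ by \cite[Proposition~14.25]{DSu4}, which removes the smoothing operator from the corrector. The bookkeeping of constants in terms of the problem data~\eqref{problem_data} is the same as in your proposal.
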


By analogy with the proof of Corollary \ref{corollary}, from Theorem \ref{th2_corrector} and Lemma \ref{lemma2} we deduce the following corollary.

	\begin{corollary}
		\label{corollary25}
		Let $\mathcal{L}_\varepsilon$~be the operator~\eqref{L_eps}, and let $\mathcal{L}^0$~be the effective operator~\eqref{L0}.
Let $\mathcal{L}_{G,\varepsilon}$ and $\mathcal{L}_{G}^0$ be the parts of the operators $\mathcal{L}_{\varepsilon}$ and $\mathcal{L}^0$ in the subspace  $G(\mu_0)$.
Then for $\tau \in {\mathbb R}$ and $0< \varepsilon \le 1$ we have 
\begin{equation}
\label{sin_est_corrector3}
\begin{aligned}
&\bigl\|\mathcal{L}_{G,\varepsilon}^{-1/2} \sin( \tau \mathcal{L}_{G,\varepsilon}^{1/2})  - 
\bigl(I + \varepsilon \mu_0^{1/2} (\nabla \rho)^\varepsilon  \operatorname{div} \mu_0^{1/2} \bigr)
(\mathcal{L}_G^0)^{-1/2} \sin( \tau (\mathcal{L}_G^0)^{1/2})
\bigr\|_{G^{3/2}  \to H^1}  
\\
& \qquad \le \check{C}_9 (1+ |\tau| )^{1/2}  \varepsilon,
	\end{aligned}
\end{equation}
\begin{equation*}
\begin{aligned}
	\bigl\| \nu^\varepsilon  \operatorname{div} \mu_0^{1/2} \mathcal{L}_{G,\varepsilon}^{-1/2} \sin( \tau \mathcal{L}_{G,\varepsilon}^{1/2})  - \underline{\nu} \, \operatorname{div} \mu_0^{1/2} (\mathcal{L}_G^0)^{-1/2} \sin( \tau (\mathcal{L}_G^0)^{1/2})  \bigr\|_{G^{3/2} \to L_2} 
		\le \check{C}_{10} (1+ |\tau| )^{1/2}  \varepsilon.
	\end{aligned}
\end{equation*}	
The constants $\check{C}_9$ and $\check{C}_{10}$ depend only on $|\mu_0|$, $|\mu_0^{-1}|$,		
	 $\|\nu\|_{L_\infty}$, $\|\nu^{-1}\|_{L_\infty}$, and the parameters of the lattice $\Gamma$.
	\end{corollary}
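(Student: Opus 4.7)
The plan is to mimic the proof of Corollary \ref{corollary} but with the corrector estimates of Theorem \ref{th2_corrector} in place of the principal-order estimates of Theorems \ref{cos_thrm3a}. The key observation is that $\mathcal{L}_{G,\varepsilon}$ and $\mathcal{L}_G^0$ depend only on $\mu_0$ and $\nu(\mathbf{x})$, not on $\eta(\mathbf{x})$, so we may replace $\eta$ by any convenient constant matrix without affecting the gradient-part operators or their associated gradient-part correctors.

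First, introduce the auxiliary operator $\check{\mathcal{L}}_\varepsilon$ with the original coefficients $\mu_0, \nu(\mathbf{x})$ and the constant coefficient $\check{\eta} = \mathbf{1}_3$. By Remark \ref{N=0}$(1^\circ)$, $\check{\mathcal{L}}_\varepsilon$ satisfies Condition \ref{cond1}, so Theorem \ref{th2_corrector} applies to $\check{\mathcal{L}}_\varepsilon$ and yields the two $(H^{3/2} \to H^1)$ and $(H^{3/2} \to L_2)$ estimates with constants depending only on the problem data for $\check{\mathcal{L}}_\varepsilon$, i.e., on $|\mu_0|, |\mu_0^{-1}|, \|\nu\|_{L_\infty}, \|\nu^{-1}\|_{L_\infty}$, and the lattice.

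Next, I would use Lemma \ref{lemma2} to split these full-space estimates into their divergence-free and gradient components. For the gradient component, I need to identify the relevant correctors. Since $\check{\eta} = \mathbf{1}_3$ is constant, the functions $\widetilde{\Phi}_j$ solving \eqref{2.8aaa} vanish, and hence $\mathbf{p}_j = 0$, so the matrix $\Psi$ associated with $\check{\mathcal{L}}_\varepsilon$ is zero, $\check{\eta}^0 = \mathbf{1}_3$, and $\Sigma = 0$. Thus the corrector simplifies and the divergence-free-part inequality in Lemma \ref{lemma2}$(1^\circ)$ is trivial, while the gradient-part inequality reduces to exactly \eqref{sin_est_corrector3} (with $\check{\mathcal{L}}_{G,\varepsilon}$ in place of $\mathcal{L}_{G,\varepsilon}$) — the corrector $\mu_0^{1/2}(\nabla \rho)^\varepsilon \operatorname{div} \mu_0^{1/2}$ depends only on $\mu_0, \nu$ and is therefore the same for $\mathcal{L}_\varepsilon$ and $\check{\mathcal{L}}_\varepsilon$. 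A parallel argument applied to Lemma \ref{lemma2}$(2^\circ)$ gives the flux estimate.

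Finally I invoke the coincidence $\mathcal{L}_{G,\varepsilon} = \check{\mathcal{L}}_{G,\varepsilon}$ and $\mathcal{L}_G^0 = \check{\mathcal{L}}_G^0$ (both follow from the reduction by the Weyl decomposition in Subsection \ref{Weyl}, since only the $\nu$-dependent summand $-\mu_0^{1/2}\nabla \nu \operatorname{div}\mu_0^{1/2}$ survives on $G(\mu_0)$) to transfer the estimates from $\check{\mathcal{L}}$ to $\mathcal{L}$, obtaining the two claimed inequalities with constants that inherit the dependence only on $|\mu_0|, |\mu_0^{-1}|, \|\nu\|_{L_\infty}, \|\nu^{-1}\|_{L_\infty}$ and the lattice. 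The only mildly technical point is the bookkeeping for the corrector and flux formulas — namely confirming that after setting $\check{\eta}=\mathbf{1}_3$ the auxiliary corrector collapses cleanly to the one appearing in \eqref{sin_est_corrector3} — but this is routine given the explicit expressions for $\Lambda$, $\Psi$, $\Sigma$, and $\widetilde{g}$ recorded in Section 2.
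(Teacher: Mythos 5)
Your proposal is correct and takes essentially the same route as the paper: introduce the auxiliary operator $\check{\mathcal L}_\varepsilon$ with $\check{\eta}=\mathbf{1}_3$, invoke Remark \ref{N=0}$(1^\circ)$ to verify Condition \ref{cond1}, apply Theorem \ref{th2_corrector}, and split via Lemma \ref{lemma2}, noting that the gradient parts of $\check{\mathcal L}_\varepsilon$, $\check{\mathcal L}^0$ and the gradient corrector $\mu_0^{1/2}(\nabla\rho)^\varepsilon\operatorname{div}\mu_0^{1/2}$ depend only on $\nu$ and $\mu_0$ and hence coincide with those of $\mathcal{L}_\varepsilon$. The additional observation that $\Psi\equiv 0$ for constant $\check{\eta}$ (making the divergence-free component trivial) is accurate but not needed, since Lemma \ref{lemma2} directly yields the gradient-part inequality from the full-space estimate.
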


Now, using \cite[Theorem 15.36]{DSu4} together with Lemma \ref{lemma2} and Corollary \ref{corollary25}, we obtain the following result; cf. the proof of Theorem \ref{cos_thrm3aa}.
  
 \begin{theorem}	
  \label{th3_corrector}
		Let $\mathcal{L}_\varepsilon$~be the operator~\emph{(\ref{L_eps})}, and let $\mathcal{L}^0$~be the effective operator~\emph{(\ref{L0})}. Suppose that Condition \emph{\ref{cond2}} is satisfied. 
		Then for $\tau \in \mathbb{R}$ and $0< \varepsilon \le 1$ we have 
\begin{equation*}
\begin{aligned}
\bigl\|\mathcal{L}_\varepsilon^{-1/2} \sin( \tau \mathcal{L}_\varepsilon^{1/2})  - 
\bigl( I + \varepsilon \Lambda^\varepsilon  b({\mathbf D}) \bigr) (\mathcal{L}^0)^{-1/2} \sin( \tau (\mathcal{L}^0)^{1/2})
\bigr\|_{H^{3/2} (\mathbb{R}^3) \to H^1 (\mathbb{R}^3)} 
\le {C}_{11} (1+ |\tau| )^{1/2}  \varepsilon,
	\end{aligned}
\end{equation*}
\begin{equation*}
\begin{aligned}
 \bigl\| g^\varepsilon  b({\mathbf D}) \mathcal{L}_\varepsilon^{-1/2} \sin( \tau \mathcal{L}_\varepsilon^{1/2})  - 
\widetilde{g}^\varepsilon  b({\mathbf D}) (\mathcal{L}^0)^{-1/2} \sin( \tau (\mathcal{L}^0)^{1/2})
 \bigr\|_{H^{3/2} (\mathbb{R}^3) \to L_2 (\mathbb{R}^3)} 
 \le {C}_{12} (1+ |\tau| )^{1/2}  \varepsilon.
	\end{aligned}
\end{equation*}	
The constants ${C}_{11}$ and ${C}_{12}$ depend only on the problem data \eqref{problem_data} and $c^\circ$.
  \end{theorem}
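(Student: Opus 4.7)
The plan is to mimic the proof of Theorem \ref{cos_thrm3aa}, splitting each estimate via the Weyl decomposition and then replacing the original operator by an auxiliary one whose germ has the spectral multiplicity structure required by \cite[Theorem 15.36]{DSu4}.

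First I would invoke Lemma \ref{lemma2} to reduce each of the two claimed inequalities to a pair of inequalities: one on the divergence-free subspace $J(\mu_0)$ and one on the gradient subspace $G(\mu_0)$. The $G$-parts are handled immediately by Corollary \ref{corollary25}, which provides both the $(G^{3/2}\to H^1)$-approximation with corrector $\varepsilon \mu_0^{1/2}(\nabla \rho)^\varepsilon \operatorname{div}\mu_0^{1/2}$ and the $(G^{3/2}\to L_2)$-approximation of the gradient part of the flux, with the required bound $C(1+|\tau|)^{1/2}\varepsilon$.

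For the $J$-part, I would introduce, as in the proof of Theorem \ref{cos_thrm3aa}, the auxiliary operator $\widehat{\mathcal L}_\varepsilon$ with the same $\mu_0$ and $\eta(\mathbf{x})$ but with the constant coefficient $\widehat{\nu} = 2 |\mu_0^{-1}|^2 \|\eta^{-1}\|_{L_\infty}$ in place of $\nu(\mathbf{x})$. Since the divergence-free part of the operator involves only the curl-expression, $\widehat{\mathcal L}_{J,\varepsilon}=\mathcal{L}_{J,\varepsilon}$. By \eqref{gammaj_est1}, the eigenvalue $\widehat{\gamma}_3(\boldsymbol{\theta}) = \widehat{\nu}\langle \mu_0\boldsymbol{\theta},\boldsymbol{\theta}\rangle$ is strictly larger than $\gamma_1(\boldsymbol{\theta}),\gamma_2(\boldsymbol{\theta})$ for every $\boldsymbol{\theta}\in\mathbb{S}^2$, hence the third branch is separated from the first two. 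Together with Condition \ref{cond2} this ensures that $\widehat{\mathcal L}$ satisfies $N_0(\boldsymbol{\theta})\equiv 0$ and the spectral multiplicity of its germ is independent of $\boldsymbol{\theta}$, i.e.\ Condition 9.7 of \cite{DSu4} holds. Thus \cite[Theorem 15.36]{DSu4} applies to $\widehat{\mathcal L}_\varepsilon$ and yields the desired $(H^{3/2}\to H^1)$- and flux $(H^{3/2}\to L_2)$-approximations (with the corresponding corrector built from $\widehat{\Lambda}$ and $\widehat{\widetilde g}$) with the bound $C(1+|\tau|)^{1/2}\varepsilon$. A second application of Lemma \ref{lemma2} extracts the $J$-component of these estimates.

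The main technical obstacle, and the key observation that makes the substitution harmless, is that the $J$-components of the correctors coincide for $\mathcal L_\varepsilon$ and $\widehat{\mathcal L}_\varepsilon$. Looking at the explicit formulas between \eqref{2.8aaa} and \eqref{g00}, the first block-column $\mu_0^{-1/2}\Psi$ of $\Lambda$ and the upper-left block $(\eta^0)^{-1}+\Sigma(\mathbf{x})$ of $\widetilde{g}$ are determined by $\eta$ and $\mu_0$ alone, while the second block-column $\mu_0^{1/2}\nabla\rho$ and the lower-right block $\underline{\nu}$ involve only $\nu$ and $\mu_0$. Consequently, $\Lambda^\varepsilon b(\mathbf{D})$ restricted to $J(\mu_0)$ (which kills the $\nu$-dependent term) and $\widetilde g^\varepsilon b(\mathbf{D})$ restricted to $J(\mu_0)$ are insensitive to the replacement $\nu\rightsquigarrow\widehat\nu$, so the $J$-component of the approximation for $\widehat{\mathcal L}_\varepsilon$ is exactly the $J$-component of the claimed approximation for $\mathcal L_\varepsilon$. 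Combining the $J$- and $G$-parts via Lemma \ref{lemma2} completes the proof, with the constants depending on the problem data \eqref{problem_data} and on $c^\circ$ (through the application of \cite[Theorem 15.36]{DSu4} to $\widehat{\mathcal L}_\varepsilon$).
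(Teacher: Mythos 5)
Your proof is correct and follows essentially the same route as the paper's (which simply invokes \cite[Theorem 15.36]{DSu4} together with Lemma \ref{lemma2} and Corollary \ref{corollary25}, referring the reader to the proof of Theorem \ref{cos_thrm3aa} for the auxiliary-operator device). In fact you spell out a step the paper leaves implicit, namely that the $J$-components of the corrector $\Lambda^\varepsilon b(\mathbf{D})$ and of $\widetilde g^\varepsilon b(\mathbf{D})$ are unchanged under $\nu\rightsquigarrow\widehat\nu$, which is exactly what makes the substitution harmless.
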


In the interpolational results about approximation of the operator 
$\mathcal{L}_\varepsilon^{-1/2} \sin( \tau \mathcal{L}_\varepsilon^{1/2})$
in the energy norm,  we use the smoothing operator $\Pi_\varepsilon$ acting in  
$L_2({\mathbb R}^3;{\mathbb C}^4)$ and given by 
$$
(\Pi_\varepsilon {\mathbf u})({\mathbf x}) = (2\pi)^{-3/2} \intop_{\widetilde{\Omega}/ \varepsilon } e^{i \langle {\mathbf x}, \boldsymbol{\xi}\rangle} 
\widehat{{\mathbf u}}(\boldsymbol{\xi}) \, d\boldsymbol{\xi}.
$$
 Here $\widehat{{\mathbf u}}(\boldsymbol{\xi})$ is the Fourier-image of a function ${\mathbf u}({\mathbf x})$.

 \begin{theorem}	
  \label{th4_corrector}
		Let $\mathcal{L}_\varepsilon$~be the operator~\emph{(\ref{L_eps})}, and let  $\mathcal{L}^0$~be the effective operator~\emph{(\ref{L0})}. 
	Then for $0\le s \le 2$, $\tau \in \mathbb{R}$, and $\varepsilon >0$ we have 
\begin{equation}
\label{sin_est_corrector7}
\begin{aligned}
&\bigl\| {\mathbf D} \left(\mathcal{L}_\varepsilon^{-1/2} \sin( \tau \mathcal{L}_\varepsilon^{1/2})  - 
\bigl( I + \varepsilon \Lambda^\varepsilon \Pi_\varepsilon b({\mathbf D}) \bigr) (\mathcal{L}^0)^{-1/2} \sin( \tau (\mathcal{L}^0)^{1/2}) \right)
\bigr\|_{H^{s} (\mathbb{R}^3) \to L_2 (\mathbb{R}^3)} 
\\
&\qquad \le \mathcal{C}_{7}(s) (1+ |\tau| )^{s/2}  \varepsilon^{s/2},
	\end{aligned}
\end{equation}
\begin{equation}
\label{sin_est_corrector8}
\begin{aligned}
&\bigl\| g^\varepsilon b({\mathbf D}) \mathcal{L}_\varepsilon^{-1/2} \sin( \tau \mathcal{L}_\varepsilon^{1/2})  - 
\bigl( g^0 + (\widetilde{g}^\varepsilon  - g^0) \Pi_\varepsilon \bigr) b({\mathbf D}) (\mathcal{L}^0)^{-1/2} \sin( \tau (\mathcal{L}^0)^{1/2}) \bigr\|_{H^{s} (\mathbb{R}^3) \to L_2 (\mathbb{R}^3)} 
\\
&\qquad \le \mathcal{C}_{8}(s) (1+ |\tau| )^{s/2}  \varepsilon^{s/2}.
	\end{aligned}
\end{equation}	
The constants $\mathcal{C}_{7}(s)$ and $\mathcal{C}_{8}(s)$ depend only on the problem data \eqref{problem_data} and on~$s$.
  \end{theorem}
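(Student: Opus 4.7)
The plan is to derive \eqref{sin_est_corrector7} and \eqref{sin_est_corrector8} by interpolation between the endpoints $s=0$ and $s=2$, by the same mechanism that passes from Theorem \ref{cos_thrm1} to Theorem \ref{cos_thrm2} (and from Theorem \ref{th1_corrector} to its interpolated versions in \cite{DSu4}).

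At $s=2$, Theorem \ref{th1_corrector} already bounds the $(H^2\to H^1)$-norm of the left-hand side of \eqref{sin_est_corrector7} with the unsmoothed corrector $I+\varepsilon\Lambda^\varepsilon b(\mathbf{D})$ by $C(1+|\tau|)\varepsilon$. Since $\|\mathbf{D}\cdot\|_{L_2}\le\|\cdot\|_{H^1}$, the corresponding $(H^2\to L_2)$-bound for the gradient of the discrepancy is automatic. To replace $\Lambda^\varepsilon b(\mathbf{D})$ by $\Lambda^\varepsilon\Pi_\varepsilon b(\mathbf{D})$, I estimate the extra term $\varepsilon\mathbf{D}\Lambda^\varepsilon(I-\Pi_\varepsilon)b(\mathbf{D})(\mathcal{L}^0)^{-1/2}\sin(\tau(\mathcal{L}^0)^{1/2})$ in the $(H^2\to L_2)$-norm by $C(1+|\tau|)\varepsilon$. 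This is a standard smoothing estimate: $(I-\Pi_\varepsilon)$ gains a factor of $\varepsilon^2$ on $H^2$-regular functions, while the multiplier property of $\Lambda$ in dimension $d=3$ (see \cite[Proposition 14.25]{DSu4}) allows the factor $\varepsilon\mathbf{D}\Lambda^\varepsilon$ to lose at most $\varepsilon^{-1}$. The flux estimate \eqref{sin_est_corrector8} at $s=2$ is handled analogously, starting from the flux bound of Theorem \ref{th1_corrector} and using that $(\widetilde{g}^\varepsilon-g^0)(I-\Pi_\varepsilon)$ composed with $b(\mathbf{D})(\mathcal{L}^0)^{-1/2}\sin(\tau(\mathcal{L}^0)^{1/2})$ obeys the same kind of discrepancy bound.

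At $s=0$, I verify uniform $L_2\to L_2$ boundedness of both left-hand sides, independent of $\tau$ and $\varepsilon$. Applying the lower form estimate \eqref{estimates} to $\mathbf{v}=\mathcal{L}_\varepsilon^{-1/2}\sin(\tau\mathcal{L}_\varepsilon^{1/2})\mathbf{u}$ and using $\|\sin(\tau\mathcal{L}_\varepsilon^{1/2})\|_{L_2\to L_2}\le 1$ gives $\|\mathbf{D}\mathcal{L}_\varepsilon^{-1/2}\sin(\tau\mathcal{L}_\varepsilon^{1/2})\mathbf{u}\|_{L_2}\le c_0^{-1/2}\|\mathbf{u}\|_{L_2}$, and the same argument applies to $\mathcal{L}^0$. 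The corrector piece $\varepsilon\mathbf{D}\Lambda^\varepsilon\Pi_\varepsilon b(\mathbf{D})(\mathcal{L}^0)^{-1/2}\sin(\tau(\mathcal{L}^0)^{1/2})$ is bounded by composing the standard uniform estimate $\|\varepsilon\mathbf{D}(\Lambda^\varepsilon\Pi_\varepsilon\mathbf{w})\|_{L_2}\le C\|\mathbf{w}\|_{L_2}$ (which holds because $\Pi_\varepsilon$ restricts frequencies to size $O(\varepsilon^{-1})$ and thereby compensates the $\varepsilon^{-1}$ produced by differentiating the oscillating factor) with the uniform $L_2$-boundedness of $b(\mathbf{D})(\mathcal{L}^0)^{-1/2}\sin(\tau(\mathcal{L}^0)^{1/2})$, which is a direct consequence of the form bound for $\mathcal{L}^0$. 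For \eqref{sin_est_corrector8} at $s=0$, uniform $L_2$-boundedness of the flux $g^\varepsilon b(\mathbf{D})\mathcal{L}_\varepsilon^{-1/2}\sin(\tau\mathcal{L}_\varepsilon^{1/2})$ follows from $\|g^\varepsilon b(\mathbf{D})\mathbf{v}\|_{L_2}\le C\|\mathcal{L}_\varepsilon^{1/2}\mathbf{v}\|_{L_2}$, a pointwise bound arising from the factorisation $\mathcal{L}_\varepsilon=b(\mathbf{D})^*g^\varepsilon b(\mathbf{D})$; the effective-plus-corrector piece is treated the same way.

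Real interpolation between these two endpoints then delivers \eqref{sin_est_corrector7} and \eqref{sin_est_corrector8} for all $0\le s\le 2$ with the rate $(1+|\tau|)^{s/2}\varepsilon^{s/2}$. I expect the main technical obstacle to be the $s=2$ discrepancy bound for the smoothed corrector term---in particular, the $(H^2\to L_2)$-estimate for $\varepsilon\mathbf{D}\Lambda^\varepsilon(I-\Pi_\varepsilon)b(\mathbf{D})$---but the required ingredients are standard in the Suslina--Dorodnyi framework and can be cited directly from \cite{DSu4}.
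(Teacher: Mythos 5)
Your plan is sound in outline but takes a genuinely different — and more laborious — route than the paper, and the justification of the $s=2$ endpoint contains a gap.

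The paper does not re-derive the interpolation at all. It simply invokes \cite[Corollary 15.9]{DSu4}, which already gives \eqref{sin_est_corrector7} and the flux estimate in the form $\bigl\| g^\varepsilon b(\mathbf{D})\mathcal{L}_\varepsilon^{-1/2}\sin(\tau\mathcal{L}_\varepsilon^{1/2}) - \widetilde{g}^\varepsilon\Pi_\varepsilon b(\mathbf{D})(\mathcal{L}^0)^{-1/2}\sin(\tau(\mathcal{L}^0)^{1/2})\bigr\|_{H^s\to L_2}\le C(1+|\tau|)^{s/2}\varepsilon^{s/2}$, with the \emph{smoothing already present}. To convert this to \eqref{sin_est_corrector8}, the paper only needs to estimate the extra piece $g^0(I-\Pi_\varepsilon)b(\mathbf{D})(\mathcal{L}^0)^{-1/2}\sin(\tau(\mathcal{L}^0)^{1/2})$, which is elementary because $g^0$ is a \emph{constant} matrix: one gets $\|g\|_{L_\infty}^{1/2}\,\|I-\Pi_\varepsilon\|_{H^s\to L_2}\le\|g\|_{L_\infty}^{1/2} r_0^{-s/2}\varepsilon^{s/2}$.

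Your approach instead starts at $s=2$ from the \emph{unsmoothed} corrector of Theorem \ref{th1_corrector} and must insert $\Pi_\varepsilon$ before interpolating. This forces you to bound $\varepsilon\mathbf{D}\bigl(\Lambda^\varepsilon(I-\Pi_\varepsilon)b(\mathbf{D})(\mathcal{L}^0)^{-1/2}\sin(\tau(\mathcal{L}^0)^{1/2})\bigr)$ and, for the flux, $(\widetilde{g}^\varepsilon-g^0)(I-\Pi_\varepsilon)b(\mathbf{D})(\mathcal{L}^0)^{-1/2}\sin(\tau(\mathcal{L}^0)^{1/2})$ in $(H^2\to L_2)$. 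The heuristic \textquotedblleft $(I-\Pi_\varepsilon)$ gains $\varepsilon^2$ on $H^2$ while $\varepsilon\mathbf{D}\Lambda^\varepsilon$ loses at most $\varepsilon^{-1}$\textquotedblright\ is not correct as stated: $\mathbf{D}(\Lambda^\varepsilon\cdot)$ produces the term $\varepsilon^{-1}(\mathbf{D}\Lambda)^\varepsilon$, and since $\mathbf{D}\Lambda\in L_2(\Omega)$ only, this is \emph{not} an $L_2\to L_2$ bounded multiplication. What actually works is the $H^2\to H^1$ multiplier property of $\Lambda$ (which is the content of \cite[Prop. 14.25]{DSu4} and of the inequality \eqref{sharp7} quoted in the paper for $H^{3/2}\to H^1$), applied to the full composite $\varepsilon\Lambda^\varepsilon(I-\Pi_\varepsilon)$ in the graph norm, not a naive composition of operator norms. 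The flux conversion is worse for you, because $\widetilde{g}^\varepsilon-g^0$ contains $\Sigma^\varepsilon$, which is merely $L_2(\Omega)$-periodic; $(\widetilde{g}^\varepsilon-g^0)(I-\Pi_\varepsilon)$ is then not bounded on $L_2$, and the \textquotedblleft same kind of discrepancy bound\textquotedblright\ you invoke does not follow by analogy with the case of the constant matrix $g^0$. The paper avoids this entirely by going in the opposite direction: from the smoothed flux $\widetilde{g}^\varepsilon\Pi_\varepsilon$ to the target, paying only the constant-coefficient penalty $g^0(I-\Pi_\varepsilon)$. Your $s=0$ endpoint and the interpolation step are both correct, but the $s=2$ endpoint needs to be rebuilt from the \emph{already-smoothed} estimates of \cite{DSu4}, not from Theorem \ref{th1_corrector}.
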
	

\begin{proof}
Corollary 15.9 from \cite{DSu4} directly implies estimate \eqref{sin_est_corrector7} together with the inequality 
\begin{equation}
\label{sin_est_corrector8a}
\begin{aligned}
& \bigl\| g^\varepsilon  b({\mathbf D}) \mathcal{L}_\varepsilon^{-1/2} \sin( \tau \mathcal{L}_\varepsilon^{1/2})  - 
\widetilde{g}^\varepsilon \Pi_\varepsilon  b({\mathbf D}) (\mathcal{L}^0)^{-1/2} \sin( \tau (\mathcal{L}^0)^{1/2})
\bigr\|_{H^{s} (\mathbb{R}^3) \to L_2 (\mathbb{R}^3)} 
\\
&\qquad \le \widetilde{\mathcal{C}}_{8}(s) (1+ |\tau| )^{s/2}  \varepsilon^{s/2}.
	\end{aligned}
\end{equation}	
Take into account that 
\begin{equation}
\label{sin_est_corrector8b}
\begin{split}
\bigl\|{g}^0 (I- \Pi_\varepsilon ) b({\mathbf D}) (\mathcal{L}^0)^{-1/2} \sin( \tau (\mathcal{L}^0)^{1/2})
\bigr\|_{H^{s} (\mathbb{R}^3) \to L_2 (\mathbb{R}^3)} 
\le \|g\|_{L_\infty}^{1/2}  \|I- \Pi_\varepsilon \|_{H^s({\mathbb R}^3) \to L_2({\mathbb R}^3)}.
\end{split}
\end{equation}	
We have 
$$
 \|(I- \Pi_\varepsilon ) {\mathbf u} \|^2_{L_2({\mathbb R}^3)} = \intop_{{\mathbb R}^3 \setminus (\widetilde{\Omega}/ \varepsilon )} 
 |\widehat{{\mathbf u}}(\boldsymbol{\xi})|^2
\, d\boldsymbol{\xi} \le r_0^{-2\sigma} \varepsilon^{2\sigma} \| {\mathbf u}\|^2_{H^\sigma({\mathbb R}^3)},
$$
whence 
\begin{equation}
\label{sin_est_corrector8c}
 \| I- \Pi_\varepsilon  \|_{H^s({\mathbb R}^3) \to L_2({\mathbb R}^3)} \le 
 r_0^{-\sigma} \varepsilon^\sigma, \quad 0\le \sigma \le s.
\end{equation}
Relations \eqref{sin_est_corrector8a}, \eqref{sin_est_corrector8b}, and \eqref{sin_est_corrector8c} (with $\sigma = s/2$)
imply \eqref{sin_est_corrector8}.
\end{proof}

We need the following analog of Lemma \ref{lemma2}.

\begin{lemma}
\label{lemma3}
$1^\circ$. The estimate 
\begin{equation*}
	\bigl\| {\mathbf D} \left( \mathcal{L}_{\varepsilon}^{-1/2} \sin( \tau \mathcal{L}_{\varepsilon}^{1/2})  - 
		\bigl(I + \varepsilon \Lambda^\varepsilon \Pi_\varepsilon b({\mathbf D}) \bigr) (\mathcal{L}^0)^{-1/2} \sin( \tau (\mathcal{L}^0)^{1/2}) \right) \bigr\|_{H^s (\mathbb{R}^3) \to L_2 (\mathbb{R}^3)} \le \mathcal{C}(\tau) \varepsilon^{\sigma}
\end{equation*}
with some  $s\ge 0$ and  $\sigma \ge 0$ is equivalent to the pair of inequalities 
\begin{align*}
	\bigl\| {\mathbf D} \left( \mathcal{L}_{J,\varepsilon}^{-1/2} \sin( \tau \mathcal{L}_{J,\varepsilon}^{1/2})  - 
		\bigl(I + \varepsilon \mu_0^{-1/2} \Psi^\varepsilon \Pi_\varepsilon \operatorname{curl} \mu_0^{-1/2} \bigr)
		 (\mathcal{L}_J^0)^{-1/2} \sin( \tau (\mathcal{L}_J^0)^{1/2}) \right) 
		\bigr\|_{J^s \to L_2} \le \mathcal{C}(\tau) \varepsilon^{\sigma},
\\
	\bigl\| {\mathbf D} \left(\mathcal{L}_{G,\varepsilon}^{-1/2} \sin( \tau \mathcal{L}_{G,\varepsilon}^{1/2})  -
	\bigl(I + \varepsilon \mu_0^{1/2} (\nabla \rho)^\varepsilon \Pi_\varepsilon  \operatorname{div} \mu_0^{1/2} \bigr)
	 (\mathcal{L}_G^0)^{-1/2} \sin( \tau (\mathcal{L}_G^0)^{1/2}) \right) \bigr\|_{G^s \to L_2} \le \mathcal{C}(\tau) \varepsilon^{\sigma}.
\end{align*}
$2^\circ$. The estimate 
\begin{equation*}
	\bigl\| g^\varepsilon b({\mathbf D} )\mathcal{L}_{\varepsilon}^{-1/2} \sin( \tau \mathcal{L}_{\varepsilon}^{1/2})  - 
	\bigl( g^0 + (\widetilde{g}^\varepsilon  - g^0) \Pi_\varepsilon \bigr) b({\mathbf D}) (\mathcal{L}^0)^{-1/2} \sin( \tau (\mathcal{L}^0)^{1/2})  \bigr\|_{H^s (\mathbb{R}^3) \to L_2 (\mathbb{R}^3)} \le \mathcal{C}(\tau) \varepsilon^{\sigma}
\end{equation*}
with some $s\ge 0$ and $\sigma \ge 0$ is equivalent to the pair of inequalities 
\begin{align*}
		&\bigl\| (\eta^\varepsilon)^{-1} \operatorname{curl} \mu_0^{-1/2} 
		\mathcal{L}_{J,\varepsilon}^{-1/2} \sin( \tau \mathcal{L}_{J,\varepsilon}^{1/2})  - 
		\bigl((\eta^0)^{-1} + \Sigma^\varepsilon   \Pi_\varepsilon\bigr) \operatorname{curl} \mu_0^{-1/2} (\mathcal{L}_J^0)^{-1/2} \sin( \tau (\mathcal{L}_J^0)^{1/2})  \bigr\|_{J^s \to L_2} 
		\\
		&\qquad \le \mathcal{C}(\tau) \varepsilon^{\sigma},
\\
		& \bigl\| \nu^\varepsilon \operatorname{div} \mu_0^{1/2} \mathcal{L}_{G,\varepsilon}^{-1/2} \sin( \tau \mathcal{L}_{G,\varepsilon}^{1/2})  - \underline{\nu} \, \operatorname{div} \mu_0^{1/2}
	(\mathcal{L}_G^0)^{-1/2} \sin( \tau (\mathcal{L}_G^0)^{1/2})  \bigr\|_{G^s \to L_2} \le \mathcal{C}(\tau) \varepsilon^{\sigma}.
\end{align*}
\end{lemma}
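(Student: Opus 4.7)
The plan is to reduce the claim to the already-established fact (used in Lemmas \ref{lemma} and \ref{lemma2}) that the Weyl decomposition \eqref{Weyl_decomp} reduces both $\mathcal{L}_\varepsilon$ and $\mathcal{L}^0$, by verifying that the correctors on the left-hand sides are also block-diagonal with respect to $L_2=J(\mu_0)\oplus G(\mu_0)$. The new ingredient compared with Lemma \ref{lemma2} is the smoothing operator $\Pi_\varepsilon$. First I would record that $\Pi_\varepsilon$, being a Fourier multiplier, commutes with $\mathbf{D}$, $b(\mathbf{D})$, $\operatorname{curl}$, $\operatorname{div}$, and with the constant-coefficient operator $\mathcal{L}^0$, and that it preserves each of the subspaces $J(\mu_0)$, $G(\mu_0)$, $J^s$, $G^s$ (the defining condition $\operatorname{div}\mu_0^{1/2}\mathbf{u}=0$ is a Fourier-side condition). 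In particular, $(\mathcal{L}^0)^{-1/2}\sin(\tau(\mathcal{L}^0)^{1/2})$ splits as $T_J^0(\tau)\oplus T_G^0(\tau)$ with $T_J^0(\tau)\mathbf{f}_J\in J(\mu_0)$ and $T_G^0(\tau)\mathbf{f}_G\in G(\mu_0)$.

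For part $1^\circ$, I would push $\Pi_\varepsilon$ past $b(\mathbf{D})$ and apply the identity
\[
\Lambda^\varepsilon b(\mathbf{D}) = \mu_0^{-1/2}\Psi^\varepsilon\operatorname{curl}\mu_0^{-1/2} + \mu_0^{1/2}(\nabla\rho)^\varepsilon\operatorname{div}\mu_0^{1/2}
\]
quoted after Theorem \ref{th2_corrector}. The first summand annihilates $G(\mu_0)$ (since $\operatorname{curl}\nabla=0$), and the second annihilates $J(\mu_0)$ by the definition of $J(\mu_0)$. Consequently the corrected approximation $\bigl(I+\varepsilon\Lambda^\varepsilon \Pi_\varepsilon b(\mathbf{D})\bigr)(\mathcal{L}^0)^{-1/2}\sin(\tau(\mathcal{L}^0)^{1/2})$ is block-diagonal, and its two blocks coincide with the operators appearing in the pair of inequalities of $1^\circ$. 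Writing the overall error as $R_J^\varepsilon(\tau)\mathbf{f}_J + R_G^\varepsilon(\tau)\mathbf{f}_G$ and using the orthogonal decomposition $\|\mathbf{f}\|_{H^s}^2 = \|\mathbf{f}_J\|_{H^s}^2 + \|\mathbf{f}_G\|_{H^s}^2$ from Remark \ref{PJ}, the combined bound on $\|\mathbf{D}(R_J^\varepsilon\mathbf{f}_J + R_G^\varepsilon\mathbf{f}_G)\|_{L_2}$ follows from the pair of individual bounds by the triangle inequality (up to a universal factor $\sqrt{2}$), while the converse is obtained by testing on $\mathbf{f}\in J^s$ and $\mathbf{f}\in G^s$ separately.

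For part $2^\circ$, using the matrix representations of $g^\varepsilon b(\mathbf{D})$ and $\widetilde{g}^\varepsilon b(\mathbf{D})$ displayed just before Lemma \ref{lemma2} and the fact that the lower block of $\widetilde{g}^\varepsilon - g^0$ vanishes, I would compute
\[
\bigl(g^0 + (\widetilde{g}^\varepsilon - g^0)\Pi_\varepsilon\bigr) b(\mathbf{D}) = -i\begin{pmatrix}\bigl((\eta^0)^{-1} + \Sigma^\varepsilon\Pi_\varepsilon\bigr)\operatorname{curl}\mu_0^{-1/2} \\ \underline{\nu}\,\operatorname{div}\mu_0^{1/2}\end{pmatrix}.
\]
Since $\operatorname{curl}\mu_0^{-1/2}$ annihilates $G(\mu_0)$ and $\operatorname{div}\mu_0^{1/2}$ annihilates $J(\mu_0)$, the upper $\mathbb{C}^3$-valued component of the error depends only on $\mathbf{f}_J$ while the lower scalar component depends only on $\mathbf{f}_G$; as these components are orthogonal in $L_2(\mathbb{R}^3;\mathbb{C}^4)$, the squared $(H^s\to L_2)$ operator norm of the full error equals the sum of the squared norms of the two separate errors, yielding the equivalence in $2^\circ$. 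The main obstacle throughout is purely bookkeeping: one must carefully verify that the multiplier $\Lambda^\varepsilon$ and the non-trivial Fourier multiplier $\Pi_\varepsilon$ interact with the differential operators so that every piece of the corrector splits diagonally across the Weyl decomposition. Once that is checked, no further analysis is required.
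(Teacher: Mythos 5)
Your proposal is correct and supplies precisely the argument the paper leaves implicit (the paper merely calls this an ``analog of Lemma \ref{lemma2}'' and gives no proof). The key observations — that $\Pi_\varepsilon$ is a scalar Fourier multiplier and hence commutes with $b(\mathbf{D})$ and with $(\mathcal{L}^0)^{-1/2}\sin(\tau(\mathcal{L}^0)^{1/2})$ while preserving $J(\mu_0)$ and $G(\mu_0)$, that the identity $\Lambda^\varepsilon\Pi_\varepsilon b(\mathbf{D})=\mu_0^{-1/2}\Psi^\varepsilon\Pi_\varepsilon\operatorname{curl}\mu_0^{-1/2}+\mu_0^{1/2}(\nabla\rho)^\varepsilon\Pi_\varepsilon\operatorname{div}\mu_0^{1/2}$ then splits the corrector across the Weyl decomposition, and that in part $2^\circ$ the vanishing of the lower block of $\widetilde g^\varepsilon-g^0$ forces the $\mathbb{C}^4$-valued error to decouple component-wise — are exactly the ``bookkeeping'' one needs. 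One small inaccuracy in the wording of part $2^\circ$: the squared operator norm of the full error is the \emph{maximum}, not the sum, of the squared norms of the two blocks (the pointwise identity $\|E\mathbf{f}\|^2=\|E_J\mathbf{f}_J\|^2+\|E_G\mathbf{f}_G\|^2$ that you actually use is of course correct and yields the stated equivalence either way).
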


Using  \cite[Corollary 15.12]{DSu4} and taking into account  \eqref{sin_est_corrector8b} and \eqref{sin_est_corrector8c} 
(with $\sigma = 2 s/3$), we deduce the following result.

 \begin{theorem}	
  \label{th5_corrector}
		Let $\mathcal{L}_\varepsilon$~be the operator~\emph{(\ref{L_eps})}, and let $\mathcal{L}^0$~be the effective operator~\emph{(\ref{L0})}. Suppose that Condition \emph{\ref{cond1}} is satisfied.
		Then for $0\le s \le 3/2$, $\tau \in \mathbb{R}$, and $\varepsilon >0$ we have
		\begin{equation*}
\begin{aligned}
&\bigl\| {\mathbf D} \left(\mathcal{L}_\varepsilon^{-1/2} \sin( \tau \mathcal{L}_\varepsilon^{1/2})  - 
\bigl( I + \varepsilon  \Lambda^\varepsilon  \Pi_\varepsilon  b({\mathbf D}) \bigr) (\mathcal{L}^0)^{-1/2} \sin( \tau (\mathcal{L}^0)^{1/2}) \right)
\bigr\|_{H^{s} (\mathbb{R}^3) \to L_2 (\mathbb{R}^3)} 
\\
&\qquad \le \mathcal{C}_{9}(s) (1+ |\tau| )^{s/3}  \varepsilon^{2s/3},
	\end{aligned}
\end{equation*}
\begin{equation*}
\begin{aligned}
&\bigl\| g^\varepsilon b({\mathbf D}) \mathcal{L}_\varepsilon^{-1/2} \sin( \tau \mathcal{L}_\varepsilon^{1/2})  - 
\bigl( g^0 + (\widetilde{g}^\varepsilon - g^0) \Pi_\varepsilon \bigr) b({\mathbf D}) (\mathcal{L}^0)^{-1/2} \sin( \tau (\mathcal{L}^0)^{1/2})
\bigr\|_{H^{s} (\mathbb{R}^3) \to L_2 (\mathbb{R}^3)} 
\\
&\qquad \le \mathcal{C}_{10}(s) (1+ |\tau| )^{s/3}  \varepsilon^{2s/3}.
	\end{aligned}
\end{equation*}	
The constants $\mathcal{C}_{9}(s)$ and $\mathcal{C}_{10}(s)$ depend only on the problem data \eqref{problem_data} and on~$s$.
  \end{theorem}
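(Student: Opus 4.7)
The plan is to reduce the theorem to a direct application of \cite[Corollary 15.12]{DSu4}, which is the interpolated $(H^s \to L_2)$ version of Theorem \ref{th2_corrector} for the general class $\mathcal{A}_\varepsilon$, and then perform a single bookkeeping step to match the form of the ``flux'' corrector. Since $\mathcal{L}_\varepsilon = b(\mathbf{D})^* g^\varepsilon(\mathbf{x}) b(\mathbf{D})$ is a particular instance of $\mathcal{A}_\varepsilon$ with $m=4$, $n=3$, and since Condition \ref{cond1} (i.e., $N(\boldsymbol{\theta}) \equiv 0$ on $\mathbb{S}^2$) coincides with the hypothesis of the cited corollary in the operator-theoretic notation of \cite{DSu4}, the application is direct.

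Invoking \cite[Corollary 15.12]{DSu4} yields the first claimed estimate verbatim, and, for the flux, produces an estimate of the form
\begin{equation*}
\bigl\| g^\varepsilon b(\mathbf{D}) \mathcal{L}_\varepsilon^{-1/2} \sin(\tau \mathcal{L}_\varepsilon^{1/2}) - \widetilde{g}^\varepsilon \Pi_\varepsilon b(\mathbf{D}) (\mathcal{L}^0)^{-1/2} \sin(\tau (\mathcal{L}^0)^{1/2}) \bigr\|_{H^s \to L_2} \le \widetilde{\mathcal{C}}_{10}(s) (1+|\tau|)^{s/3} \varepsilon^{2s/3},
\end{equation*}
which differs from the desired statement only in that $\widetilde{g}^\varepsilon \Pi_\varepsilon b(\mathbf{D})$ appears in place of $(g^0 + (\widetilde{g}^\varepsilon - g^0) \Pi_\varepsilon) b(\mathbf{D})$. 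The elementary identity
\begin{equation*}
\bigl(g^0 + (\widetilde{g}^\varepsilon - g^0) \Pi_\varepsilon\bigr) b(\mathbf{D}) - \widetilde{g}^\varepsilon \Pi_\varepsilon b(\mathbf{D}) = g^0 (I - \Pi_\varepsilon) b(\mathbf{D})
\end{equation*}
shows that the discrepancy between the two flux correctors, applied to $(\mathcal{L}^0)^{-1/2} \sin(\tau (\mathcal{L}^0)^{1/2})$, is exactly the expression already estimated in \eqref{sin_est_corrector8b}.

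To close, apply \eqref{sin_est_corrector8b} followed by \eqref{sin_est_corrector8c} with the choice $\sigma = 2s/3 \le s$; this bounds the discrepancy by $\|g\|_{L_\infty}^{1/2} r_0^{-2s/3} \varepsilon^{2s/3}$, which is absorbed into the final constant $\mathcal{C}_{10}(s)$. This mirrors the bridging step performed in the proof of Theorem \ref{th4_corrector}, the only difference being the improved rate supplied by \cite[Corollary 15.12]{DSu4} under Condition \ref{cond1}. The main (and essentially only) obstacle is confirming that the hypothesis of the cited corollary translates precisely into Condition \ref{cond1} in our notation; the sharp exponents $s/3$ and $2s/3$ are inherited from the interpolation argument carried out in \cite{DSu4}, so no new technical estimate is needed here.
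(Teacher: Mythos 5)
Your proof is correct and reproduces the paper's own argument: the paper also invokes \cite[Corollary 15.12]{DSu4} directly, then bridges the gap between the two flux correctors via the decomposition $g^0 (I - \Pi_\varepsilon) b(\mathbf{D})$ handled by \eqref{sin_est_corrector8b} and \eqref{sin_est_corrector8c} with $\sigma = 2s/3$, exactly mirroring the bookkeeping already carried out in the proof of Theorem \ref{th4_corrector}. The only cosmetic difference is that you write out the bridging identity explicitly here, whereas the paper simply refers back to it.
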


Similarly to the proof of Corollary \ref{corollary}, from Theorem \ref{th5_corrector} and Lemma \ref{lemma3} we deduce the following corollary.

	\begin{corollary}
		\label{corollary3}
		Let $\mathcal{L}_\varepsilon$~be the operator~\eqref{L_eps}, and let $\mathcal{L}^0$~be the effective operator~\eqref{L0}.
Let $\mathcal{L}_{G,\varepsilon}$ and $\mathcal{L}_{G}^0$ be the parts of the operators $\mathcal{L}_{\varepsilon}$ and $\mathcal{L}^0$ in the subspace  $G(\mu_0)$, respectively.
Then for  $0 \le s \le 3/2$, $\tau \in {\mathbb R}$, and $\varepsilon >0$ we have 
\begin{equation*}
\begin{aligned}
&\bigl\| {\mathbf D} \left(\mathcal{L}_{G,\varepsilon}^{-1/2} \sin( \tau \mathcal{L}_{G,\varepsilon}^{1/2})  -
	\bigl(I + \varepsilon  \mu_0^{1/2} (\nabla \rho)^\varepsilon  \Pi_\varepsilon  \operatorname{div} \mu_0^{1/2} \bigr)
	 (\mathcal{L}_G^0)^{-1/2} \sin( \tau (\mathcal{L}_G^0)^{1/2}) \right)\bigr\|_{G^{s}  \to L_2}  
\\
&\qquad \le \check{\mathcal C}_9(s) (1+ |\tau| )^{s/3}  \varepsilon^{2s/3},
	\end{aligned}
\end{equation*}
\begin{equation*}
\begin{aligned}
&\bigl\| \nu^\varepsilon  \operatorname{div} \mu_0^{1/2} \mathcal{L}_{G,\varepsilon}^{-1/2} \sin( \tau \mathcal{L}_{G,\varepsilon}^{1/2})  - \underline{\nu} \, \operatorname{div} \mu_0^{1/2}
	(\mathcal{L}_G^0)^{-1/2} \sin( \tau (\mathcal{L}_G^0)^{1/2})  \bigr\|_{G^s \to L_2}
\\
&\qquad  \le \check{\mathcal C}_{10}(s) (1+ |\tau| )^{s/3}  \varepsilon^{2s/3}.
	\end{aligned}
\end{equation*}	
The constants $\check{\mathcal C}_9(s)$ and $\check{\mathcal C}_{10}(s)$ depend only on  $|\mu_0|$, $|\mu_0^{-1}|$,	$\|\nu\|_{L_\infty}$, $\|\nu^{-1}\|_{L_\infty}$, the parameters of the lattice $\Gamma$, and on $s$.
	\end{corollary}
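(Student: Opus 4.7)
The plan is to reproduce, at the level of the energy-norm corrector, the derivation of Corollary~\ref{corollary} from Theorems~\ref{cos_thrm3a}--\ref{cos_thrm4}, now starting from Theorem~\ref{th5_corrector} and using the splitting statement of Lemma~\ref{lemma3}.

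First, I would introduce an auxiliary operator $\check{\mathcal L}_\varepsilon$ obtained from $\mathcal L_\varepsilon$ by keeping $\mu_0$ and $\nu(\mathbf x)$ unchanged and replacing the matrix coefficient $\eta(\mathbf x)$ with the constant matrix $\check\eta=\mathbf 1_3$. By Remark~\ref{N=0}$(1^\circ)$ the associated operator $N(\boldsymbol\theta)$ vanishes identically, so $\check{\mathcal L}_\varepsilon$ satisfies Condition~\ref{cond1}. The key structural observation, already exploited before Corollary~\ref{corollary}, is that the gradient part in the Weyl decomposition depends only on $\mu_0$ and $\nu$; hence $\check{\mathcal L}_{G,\varepsilon}=\mathcal L_{G,\varepsilon}$, $\check{\mathcal L}^0_G=\mathcal L^0_G$, and the auxiliary function $\check\rho$ defined by the same boundary-value problem as $\rho$ is literally $\rho$.

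Next, I would apply Theorem~\ref{th5_corrector} to $\check{\mathcal L}_\varepsilon$, obtaining the two required $(H^s\to L_2)$ estimates for the full operator with the corrector $\varepsilon\check\Lambda^\varepsilon\Pi_\varepsilon b(\mathbf D)$ and with the flux approximation $\bigl(\check g^0+(\widetilde{\check g}^\varepsilon-\check g^0)\Pi_\varepsilon\bigr)b(\mathbf D)$. Then Lemma~\ref{lemma3} splits these full-space bounds into separate estimates on $J(\mu_0)$ and $G(\mu_0)$; retaining only the gradient components produces precisely the two inequalities claimed in the corollary. The dependence of the constants is then read off: the problem data~\eqref{problem_data} for $\check{\mathcal L}_\varepsilon$ involve $\|\check\eta\|_{L_\infty}=\|\check\eta^{-1}\|_{L_\infty}=1$ and leave $|\mu_0|$, $|\mu_0^{-1}|$, $\|\nu\|_{L_\infty}$, $\|\nu^{-1}\|_{L_\infty}$, and the lattice parameters unchanged, so only the data naturally associated with the gradient subspace survive in the final constants.

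I do not expect a genuine obstacle; the only substantive step is the bookkeeping already carried out in the paragraph preceding Lemma~\ref{lemma2}. One must verify that when the full corrector $\varepsilon\check\Lambda^\varepsilon\Pi_\varepsilon b(\mathbf D)$ is tested against an element of $G(\mu_0)$, the $\operatorname{curl}$-block of $\check\Lambda^\varepsilon b(\mathbf D)$ vanishes and only the contribution $\varepsilon\mu_0^{1/2}(\nabla\rho)^\varepsilon\Pi_\varepsilon\operatorname{div}\mu_0^{1/2}$ remains, and that the flux matrix $\widetilde{\check g}^\varepsilon$ acting through $\operatorname{div}\mu_0^{1/2}$ reduces to the scalar $\underline\nu$. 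Both reductions are immediate from the block structure of $g$, $\widetilde g$, and $\Lambda$ displayed in Section~2, and they ensure that applying Theorem~\ref{th5_corrector} to the auxiliary operator automatically yields the gradient-part estimates of the corollary.
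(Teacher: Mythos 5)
Your proposal reproduces the paper's own argument: the authors derive Corollary~\ref{corollary3} by the same route as Corollary~\ref{corollary}, namely applying Theorem~\ref{th5_corrector} to the auxiliary operator $\check{\mathcal L}_\varepsilon$ with $\check\eta=\mathbf 1_3$ (which satisfies Condition~\ref{cond1} by Remark~\ref{N=0}$(1^\circ)$), invoking Lemma~\ref{lemma3} to split into $J$- and $G$-parts, and retaining the $G$-part, noting that the gradient parts of $\check{\mathcal L}_\varepsilon$ and $\mathcal L_\varepsilon$ coincide. The observation that $\check\rho=\rho$ and the constant-tracking are exactly as you describe, so the proposal is correct and matches the paper's proof.
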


Combining  \cite[Corollary 15.12]{DSu4}, Lemma \ref{lemma3}, and Corollary \ref{corollary3}, we deduce the following result; cf. the proof of Theorem \ref{cos_thrm3aa}.

 \begin{theorem}	
  \label{th6_corrector}
		Let $\mathcal{L}_\varepsilon$~be the operator~\emph{(\ref{L_eps})}, and let $\mathcal{L}^0$~be the effective operator~\emph{(\ref{L0})}. Suppose that Condition \emph{\ref{cond2}} is satisfied.
		Then for  $0\le s \le 3/2$, $\tau \in \mathbb{R}$, and $\varepsilon >0$ we have 
\begin{equation*}
\begin{aligned}
& \bigl\| {\mathbf D} \left(\mathcal{L}_\varepsilon^{-1/2} \sin( \tau \mathcal{L}_\varepsilon^{1/2})  - 
(I + \varepsilon \Lambda^\varepsilon  \Pi_\varepsilon ) b({\mathbf D}) (\mathcal{L}^0)^{-1/2} \sin( \tau (\mathcal{L}^0)^{1/2}) \right)
\bigr\|_{H^{s} (\mathbb{R}^3) \to L_2 (\mathbb{R}^3)} 
\\
&\qquad \le \mathcal{C}_{11}(s) (1+ |\tau| )^{s/3}  \varepsilon^{2s/3},
	\end{aligned}
\end{equation*}
\begin{equation*}
\begin{aligned}
& \bigl\| g^\varepsilon  b({\mathbf D}) \mathcal{L}_\varepsilon^{-1/2} \sin( \tau \mathcal{L}_\varepsilon^{1/2})  - \bigl( g^0 +
(\widetilde{g}^\varepsilon  - g^0) \Pi_\varepsilon \bigr) b({\mathbf D}) (\mathcal{L}^0)^{-1/2} \sin( \tau (\mathcal{L}^0)^{1/2})
\bigr\|_{H^{s} (\mathbb{R}^3) \to L_2 (\mathbb{R}^3)} 
\\
&\qquad \le \mathcal{C}_{12}(s) (1+ |\tau| )^{s/3}  \varepsilon^{2s/3}.
	\end{aligned}
\end{equation*}	
The constants  $\mathcal{C}_{11}(s)$ and $\mathcal{C}_{12}(s)$ depend only on the problem data \eqref{problem_data}, on~$s$, and on $c^\circ$.
  \end{theorem}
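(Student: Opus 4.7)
The proof will follow the pattern established in the proof of Theorem \ref{cos_thrm3aa}, lifted to the energy-norm and flux approximations. The first step is to apply Lemma \ref{lemma3}, which reduces each of the two claimed inequalities to a pair of inequalities: one on the divergence-free component $\mathcal{L}_{J,\varepsilon}$ and one on the gradient component $\mathcal{L}_{G,\varepsilon}$ of $\mathcal{L}_\varepsilon$. The gradient component depends only on $\nu$ and $\mu_0$, so the required estimates on $\mathcal{L}_{G,\varepsilon}$ are precisely those furnished by Corollary \ref{corollary3}. All remaining effort goes into the divergence-free part.

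For the $J$-component, I plan to introduce the auxiliary operator $\widehat{\mathcal{L}}_\varepsilon$ that retains the coefficients $\mu_0$ and $\eta(\mathbf{x})$ but replaces $\nu(\mathbf{x})$ by the constant $\widehat{\nu} = 2|\mu_0^{-1}|^2 \|\eta^{-1}\|_{L_\infty}$, exactly as in the proof of Theorem \ref{cos_thrm3aa}. Thanks to the bounds \eqref{gammaj_est1}, the new branch $\widehat{\gamma}_3(\boldsymbol{\theta}) = \widehat{\nu}\langle\mu_0\boldsymbol{\theta},\boldsymbol{\theta}\rangle$ sits uniformly above $\gamma_1(\boldsymbol{\theta})$ and $\gamma_2(\boldsymbol{\theta})$, which remain the eigenvalues of the algebraic problem \eqref{solenoid_germ_spec_probl} since this problem does not involve $\nu$. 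Combined with Condition \ref{cond2}, this gives that $N_0 \equiv 0$ for $\widehat{\mathcal{L}}_\varepsilon$ and that the multiplicity pattern of the spectrum of its germ is independent of $\boldsymbol{\theta}$; in other words, Condition~9.7 of \cite{DSu4} holds for $\widehat{\mathcal{L}}_\varepsilon$. Corollary~15.12 of \cite{DSu4} then delivers the desired energy-norm corrector and flux estimates for $\widehat{\mathcal{L}}_\varepsilon$ with the exponents $(1+|\tau|)^{s/3} \varepsilon^{2s/3}$.

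The argument closes by applying Lemma \ref{lemma3} to $\widehat{\mathcal{L}}_\varepsilon$ to split its estimates into $J$- and $G$-pieces, and then invoking the identities $\mathcal{L}_{J,\varepsilon} = \widehat{\mathcal{L}}_{J,\varepsilon}$ and $\mathcal{L}_J^0 = \widehat{\mathcal{L}}_J^0$, together with the observation that the $J$-sector corrector $\mu_0^{-1/2} \Psi^\varepsilon \Pi_\varepsilon \operatorname{curl}\mu_0^{-1/2}$ and the solenoidal flux coefficient $\bigl((\eta^0)^{-1} + \Sigma^\varepsilon \Pi_\varepsilon\bigr) \operatorname{curl}\mu_0^{-1/2}$ are built entirely from $\eta$ and $\mu_0$, hence unaffected by the substitution $\nu \mapsto \widehat{\nu}$. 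Combined with Corollary \ref{corollary3} for the gradient piece, a final application of Lemma \ref{lemma3} in the reverse direction reassembles the two estimates for the full $\mathcal{L}_\varepsilon$. The main obstacle I anticipate lies in the second step, namely verifying that Condition \ref{cond2} genuinely upgrades to Condition~9.7 of \cite{DSu4} for $\widehat{\mathcal{L}}_\varepsilon$: one has to check that, in both sub-cases of Condition \ref{cond2} (the branches $\gamma_1, \gamma_2$ either strictly separated or identically coincident on $\mathbb{S}^2$), the artificial gap forced by $\widehat{\nu}$ between the $J$-branches and the new $G$-branch really does produce a $\boldsymbol{\theta}$-independent multiplicity structure; the bookkeeping on correctors and fluxes in the final step is then a direct analog of that in the proof of Corollary \ref{corollary25}.
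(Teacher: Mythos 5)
Your proposal matches the paper's own argument: the paper proves this theorem by combining Corollary 15.12 of \cite{DSu4}, Lemma \ref{lemma3}, and Corollary \ref{corollary3}, in direct analogy with the proof of Theorem \ref{cos_thrm3aa} — which is exactly the route you outline, including the auxiliary operator $\widehat{\mathcal{L}}_\varepsilon$ with constant $\widehat{\nu}$ and the verification of Condition 9.7 of \cite{DSu4}. The concern you flag at the end (that the forced spectral gap from $\widehat{\nu}$ yields a $\boldsymbol{\theta}$-independent multiplicity structure) is already settled by the argument given in the proof of Theorem \ref{cos_thrm3aa}, so it is not an outstanding gap.
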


\subsection{Approximation for the operator-valued functions of $\mathcal{L}_{J,\varepsilon}$}
Using Lemma \ref{lemma} and applying Theorems \ref{cos_thrm1}, \ref{cos_thrm2}, \ref{cos_thrm3a}, \ref{cos_thrm4},
\ref{cos_thrm3aa}, \ref{cos_thrm6} to the operator
$\widehat{\mathcal L}_\varepsilon$ with the initial coefficients $\mu_0, \eta(\mathbf{x})$ and the constant coefficient
$\widehat{\nu} = 2 |\mu_0^{-1}|^2 \|\eta^{-1}\|_{L_\infty}$, we obtain the following (combined) result.

\begin{theorem}
		\label{cos_thrm1_J}
Let $\mathcal{L}_{J,\varepsilon}$ be the part of the operator~\eqref{L_eps} in the subspace $J(\mu_0)$, and let $\mathcal{L}_{J}^0$ be the part of the effective operator~\eqref{L0} in the subspace $J(\mu_0)$.

\noindent$1^\circ$.
For $\tau \in \mathbb{R}$ and $\varepsilon >0$ we have 
	 	\begin{gather}
	 	\label{cos_thrm1_H^2_L2_est}
	 \bigl\| \cos( \tau \mathcal{L}_{J,\varepsilon}^{1/2})  - \cos( \tau (\mathcal{L}^0_J)^{1/2})  \bigr\|_{J^2 \to J} \le
\widehat{C}_1 (1+ |\tau|) \varepsilon,
	 	\\
	 	\label{sin_thrm1_H^1_L2_est}
	 \bigl\|\mathcal{L}_{J,\varepsilon}^{-1/2} \sin( \tau \mathcal{L}_{J,\varepsilon}^{1/2})
 - (\mathcal{L}^0_J)^{-1/2} \sin( \tau (\mathcal{L}^0_J)^{1/2})
\bigr\|_{J^1  \to J} \le \widehat{C}_2 (1+ |\tau|)  \varepsilon.
	 	\end{gather}
For $0 \le s \le 2$, $\tau \in \mathbb{R}$, and $\varepsilon >0$ we have 
	 	\begin{gather}
	 	\label{cos_thrm1_H^s_L2_est}
	 \bigl\| \cos( \tau \mathcal{L}_{J,\varepsilon}^{1/2})  - \cos( \tau (\mathcal{L}^0_J)^{1/2})  \bigr\|_{J^s \to J} \le
\widehat{\mathcal{C}}_1 (s) (1+|\tau|)^{s/2} \varepsilon^{s/2},
	 	\\
	 	\label{sin_thrm1_H^s_L2_est}
\begin{split}
	 \bigl\|\mathcal{L}_{J,\varepsilon}^{-1/2} \sin( \tau \mathcal{L}_{J,\varepsilon}^{1/2}) D_j
 - (\mathcal{L}^0_J)^{-1/2} \sin( \tau (\mathcal{L}^0_J)^{1/2}) D_j
\bigr\|_{J^s  \to J}
 \le \widehat{\mathcal{C}}_2 (s) (1+ |\tau|)^{s/2}  \varepsilon^{s/2},\quad j=1,2,3.
	 	\end{split}
	 	\end{gather}
The constants $\widehat{C}_1$ and $\widehat{C}_2$ are controlled in terms of the norms $|\mu_0|$, $|\mu_0^{-1}|$, 
$\|\eta\|_{L_\infty}$, $\|\eta^{-1}\|_{L_\infty}$, and the parameters of the lattice  $\Gamma$. The constants  
$\widehat{\mathcal{C}}_1(s)$ and $\widehat{\mathcal{C}}_2 (s)$ depend on the same parameters and on~$s$.

\noindent$2^\circ$. Suppose that Condition \emph{\ref{cond1}} or Condition \emph{\ref{cond2}} is satisfied.
Then for $\tau \in \mathbb{R}$ and $\varepsilon >0$ we have 
	 	\begin{gather}
	 	\label{cos_thrm1_H^3/2_L2_est}
	 \bigl\| \cos( \tau \mathcal{L}_{J,\varepsilon}^{1/2})  - \cos( \tau (\mathcal{L}^0_J)^{1/2}) \bigr\|_{J^{3/2} \to J} \le
\widehat{C}_3 (1+ |\tau|)^{1/2} \varepsilon,
	 	\\
	 	\label{sin_thrm1_H^1/2_L2_est}
	 \bigl\|\mathcal{L}_{J,\varepsilon}^{-1/2} \sin( \tau \mathcal{L}_{J,\varepsilon}^{1/2})
 - (\mathcal{L}^0_J)^{-1/2} \sin( \tau (\mathcal{L}^0_J)^{1/2})
\bigr\|_{J^{1/2}  \to J} \le \widehat{C}_4 (1+ |\tau|)^{1/2}  \varepsilon.
	 	\end{gather}
For $0 \le s \le 3/2$, $\tau \in \mathbb{R}$, and $\varepsilon >0$ we have 
	 	\begin{gather}
	 	\label{cos111}
	 \bigl\| \cos( \tau \mathcal{L}_{J,\varepsilon}^{1/2})  - \cos( \tau (\mathcal{L}^0_J)^{1/2})  \bigr\|_{J^s \to J} \le
\widehat{\mathcal{C}}_3 (s) (1+|\tau|)^{s/3} \varepsilon^{2s/3},
	 	\\
	 	\label{sin111}
\begin{split}
	 \bigl\|\mathcal{L}_{J,\varepsilon}^{-1/2} \sin( \tau \mathcal{L}_{J,\varepsilon}^{1/2}) D_j
 - (\mathcal{L}^0_J)^{-1/2} \sin( \tau (\mathcal{L}^0_J)^{1/2}) D_j
\bigr\|_{J^s  \to J} 
\le \widehat{\mathcal{C}}_4 (s) (1+|\tau|)^{s/3}  \varepsilon^{2s/3}, \quad j=1,2,3.
	 	\end{split}
	 	\end{gather}
Under Condition \emph{\ref{cond1}} the constants $\widehat{C}_3$ and $\widehat{C}_4$ are controlled in terms of the norms $|\mu_0|$, $|\mu_0^{-1}|$,  $\|\eta\|_{L_\infty}$, $\|\eta^{-1}\|_{L_\infty}$, and the parameters of the lattice $\Gamma$\emph{;} the constants  $\widehat{\mathcal{C}}_3 (s)$ and  
$\widehat{\mathcal{C}}_4 (s)$ depend on the same parameters and on $s$.
Under Condition \emph{\ref{cond2}} the constants depend also on   $c^\circ$.
  \end{theorem}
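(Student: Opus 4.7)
The plan is to apply Lemma \ref{lemma} to transfer all bounds on $\mathcal{L}_{J,\varepsilon}$ to analogous bounds on a suitably chosen full operator $\widehat{\mathcal{L}}_\varepsilon$ of the form \eqref{L_eps}, for which the general results of Theorems \ref{cos_thrm1}--\ref{cos_thrm6} are already available. I would define $\widehat{\mathcal{L}}_\varepsilon$ by keeping $\mu_0$ and $\eta(\mathbf{x})$ unchanged and replacing $\nu(\mathbf{x})$ by the constant $\widehat{\nu} = 2|\mu_0^{-1}|^2 \|\eta^{-1}\|_{L_\infty}$. The crucial observation, already used in the proof of Theorem \ref{cos_thrm3aa}, is that the divergence-free parts of $\mathcal{L}_\varepsilon$ and $\widehat{\mathcal{L}}_\varepsilon$ coincide, and the same is true of their effective operators, since $\mathcal{L}_{J,\varepsilon}$ and $\mathcal{L}_J^0$ depend only on $\mu_0$ and $\eta$. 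Consequently, every $(H^s \to L_2)$-estimate for the difference of functions of $\widehat{\mathcal{L}}_\varepsilon$ and $\widehat{\mathcal{L}}^0$ produces, via Lemma \ref{lemma}, a corresponding $(J^s \to J)$-estimate for the difference of the analogous functions of $\mathcal{L}_{J,\varepsilon}$ and $\mathcal{L}_J^0$.

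For part $1^\circ$, I would apply Theorems \ref{cos_thrm1} and \ref{cos_thrm2} to $\widehat{\mathcal{L}}_\varepsilon$. The constants ${C}_1,{C}_2$ in those results depend on the problem data \eqref{problem_data}, which for $\widehat{\mathcal{L}}_\varepsilon$ reduces to $|\mu_0|$, $|\mu_0^{-1}|$, $\|\eta\|_{L_\infty}$, $\|\eta^{-1}\|_{L_\infty}$ and the lattice parameters, because $\|\widehat{\nu}\|_{L_\infty}$ and $\|\widehat{\nu}^{-1}\|_{L_\infty}$ are explicit functions of those quantities. Lemma \ref{lemma}(1)(2)(3) then yields \eqref{cos_thrm1_H^2_L2_est}, \eqref{sin_thrm1_H^1_L2_est}, and (after interpolation as in Theorem \ref{cos_thrm2}) \eqref{cos_thrm1_H^s_L2_est}, \eqref{sin_thrm1_H^s_L2_est}.

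For part $2^\circ$, I first note that the operator $N(\boldsymbol{\theta})$ defined by \eqref{N_operator}--\eqref{Mjk} depends only on $\eta$ (through the periodic solutions $\widetilde{\Phi}_j$ of \eqref{2.8aaa}), so both Condition \ref{cond1} and the identity $N_0(\boldsymbol{\theta})\equiv 0$ from Condition \ref{cond2} are inherited by $\widehat{\mathcal{L}}_\varepsilon$. Under Condition \ref{cond1}, Theorems \ref{cos_thrm3a} and \ref{cos_thrm4} apply directly to $\widehat{\mathcal{L}}_\varepsilon$ and give \eqref{cos_thrm1_H^3/2_L2_est}--\eqref{sin111} via Lemma \ref{lemma}. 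Under Condition \ref{cond2}, the special choice of $\widehat{\nu}$ guarantees, by \eqref{gamma3} and \eqref{gammaj_est1}, that $\widehat{\gamma}_3(\boldsymbol{\theta}) = \widehat{\nu}\langle \mu_0\boldsymbol{\theta},\boldsymbol{\theta}\rangle \ge 2|\mu_0^{-1}|\|\eta^{-1}\|_{L_\infty}$ while $\gamma_1(\boldsymbol{\theta}),\gamma_2(\boldsymbol{\theta}) \le |\mu_0^{-1}|\|\eta^{-1}\|_{L_\infty}$, so the $\widehat{\gamma}_3$-branch is uniformly separated from $\gamma_1,\gamma_2$; combined with Condition \ref{cond2} this means the multiplicity pattern of the spectrum of $\widehat{S}(\boldsymbol{\theta})$ is constant on $\mathbb{S}^2$, which is exactly Condition 9.7 of \cite{DSu4}. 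Therefore Theorems \ref{cos_thrm3aa} and \ref{cos_thrm6} apply to $\widehat{\mathcal{L}}_\varepsilon$, and Lemma \ref{lemma} transports the bounds to $\mathcal{L}_{J,\varepsilon}$.

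There is no serious obstacle here: the argument is exactly the one used in the proof of Theorem \ref{cos_thrm3aa}, extended uniformly to the whole list of estimates \eqref{cos_thrm1_H^2_L2_est}--\eqref{sin111}. The one point to track carefully is the dependence of the constants: since $\widehat{\nu}$ is controlled by $|\mu_0^{-1}|$ and $\|\eta^{-1}\|_{L_\infty}$, the constants $\widehat{C}_j, \widehat{\mathcal{C}}_j(s)$ inherit the reduced dependence stated in the theorem; under Condition \ref{cond2} the separation parameter $c^\circ$ additionally enters through the quantitative version of Condition 9.7 of \cite{DSu4} applied to $\widehat{\mathcal{L}}_\varepsilon$.
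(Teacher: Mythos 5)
Your proposal is correct and follows essentially the same route as the paper: the paper deduces Theorem \ref{cos_thrm1_J} by applying Theorems \ref{cos_thrm1}, \ref{cos_thrm2}, \ref{cos_thrm3a}, \ref{cos_thrm4}, \ref{cos_thrm3aa}, \ref{cos_thrm6} to the auxiliary operator $\widehat{\mathcal{L}}_\varepsilon$ with the constant coefficient $\widehat{\nu}=2|\mu_0^{-1}|^2\|\eta^{-1}\|_{L_\infty}$ and then invoking Lemma \ref{lemma}, exactly as you do. Your accounting of why Conditions \ref{cond1} and \ref{cond2} transfer to $\widehat{\mathcal{L}}_\varepsilon$, why the choice of $\widehat{\nu}$ separates $\widehat{\gamma}_3$ from $\gamma_1,\gamma_2$, and how the constant dependence reduces, all match the paper's reasoning.
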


Similarly, using Lemmas \ref{lemma2}, \ref{lemma3} and applying Theorems  \ref{th1_corrector}, \ref{th2_corrector}, \ref{th3_corrector}, \ref{th4_corrector}, \ref{th5_corrector},  and \ref{th6_corrector} to the operator
$\widehat{\mathcal L}_\varepsilon$ with the initial coefficients  $\mu_0, \eta(\mathbf{x})$ and the constant coefficient 
$\widehat{\nu} = 2 |\mu_0^{-1}|^2 \|\eta^{-1}\|_{L_\infty}$, we obtain the following  (combined) result.

\begin{theorem}
		\label{cos_thrm2_J}
Let $\mathcal{L}_{J,\varepsilon}$ be the part of the operator~\eqref{L_eps} in the subspace 
$J(\mu_0)$ and let  $\mathcal{L}_{J}^0$ be the part of the effective operator~\eqref{L0} in the subspace $J(\mu_0)$.

\noindent$1^\circ$.
For $\tau \in \mathbb{R}$ and $0< \varepsilon \le 1$ we have 
	 	\begin{gather}
	 	\label{sin_thrm1_corr1}
	 	\begin{split}
	 	&\bigl\|\mathcal{L}_{J,\varepsilon}^{-1/2} \sin( \tau \mathcal{L}_{J,\varepsilon}^{1/2})
 - \bigl(I + \varepsilon \mu_0^{-1/2} \Psi^\varepsilon \operatorname{curl} \mu_0^{-1/2} \bigr)(\mathcal{L}^0_J)^{-1/2} \sin( \tau (\mathcal{L}^0_J)^{1/2}) \bigr\|_{J^2  \to H^1} 
\\
&\qquad \le \widehat{C}_7 (1+ |\tau|)  \varepsilon,
\end{split}
\\
	 	\label{sin_thrm1_corr2}
	 	\begin{split}
&\bigl\| (\eta^\varepsilon)^{-1} \operatorname{curl} \mu_0^{-1/2} \mathcal{L}_{J,\varepsilon}^{-1/2} \sin( \tau \mathcal{L}_{J,\varepsilon}^{1/2})
 - \bigl( (\eta^0)^{-1} +  \Sigma^\varepsilon \bigr) \operatorname{curl} \mu_0^{-1/2} (\mathcal{L}^0_J)^{-1/2} \sin( \tau (\mathcal{L}^0_J)^{1/2})
\bigr\|_{J^2  \to L_2} \\
&\qquad \le \widehat{C}_8 (1+ |\tau|)  \varepsilon.
	 	\end{split}
	 	\end{gather}
For $0 \le s \le 2$, $\tau \in \mathbb{R}$, and $\varepsilon >0$ we have 
	 	\begin{gather}
	 	\label{sin_thrm1_corr3}
 	\begin{split}
	 	&\bigl\| {\mathbf D} \left(\mathcal{L}_{J,\varepsilon}^{-1/2} \sin( \tau \mathcal{L}_{J,\varepsilon}^{1/2})
 - \bigl(I + \varepsilon  \mu_0^{-1/2} \Psi^\varepsilon\Pi_\varepsilon \operatorname{curl} \mu_0^{-1/2}\bigr) (\mathcal{L}^0_J)^{-1/2} \sin( \tau (\mathcal{L}^0_J)^{1/2})\right)
\bigr\|_{J^s  \to L_2} 
\\
&\qquad \le \widehat{\mathcal C}_7(s) (1+ |\tau|)^{s/2}  \varepsilon^{s/2},
\end{split}
\\
\label{sin_thrm1_corr4}
	 	\begin{split}
&\bigl\| (\eta^\varepsilon)^{-1} \operatorname{curl} \mu_0^{-1/2} \mathcal{L}_{J,\varepsilon}^{-1/2} \sin( \tau \mathcal{L}_{J,\varepsilon}^{1/2})
 - \bigl( (\eta^0)^{-1} + \Sigma^\varepsilon \Pi_\varepsilon \bigr)  \operatorname{curl} \mu_0^{-1/2} (\mathcal{L}^0_J)^{-1/2} \sin( \tau (\mathcal{L}^0_J)^{1/2})
\bigr\|_{J^s  \to L_2} \\
&\qquad \le \widehat{\mathcal C}_8(s) (1+ |\tau|)^{s/2}  \varepsilon^{s/2}.
	 	\end{split}
	 	\end{gather}
The constants $\widehat{C}_7$ and $\widehat{C}_8$ are controlled in terms of the norms $|\mu_0|$, $|\mu_0^{-1}|$, 
$\|\eta\|_{L_\infty}$, $\|\eta^{-1}\|_{L_\infty}$, and the parameters of the lattice $\Gamma$. The constants  
$\widehat{\mathcal{C}}_7(s)$ and $\widehat{\mathcal{C}}_8 (s)$ depend on the same parameters and on~$s$.

\noindent$2^\circ$. Suppose that Condition \emph{\ref{cond1}} or Condition  \emph{\ref{cond2}} is satisfied.
Then for $\tau \in \mathbb{R}$ and $0< \varepsilon \le 1$ we have 
	 	\begin{gather}
	 	\label{sin_thrm1_corr5}
	 	\begin{split}
	 	&\bigl\|\mathcal{L}_{J,\varepsilon}^{-1/2} \sin( \tau \mathcal{L}_{J,\varepsilon}^{1/2})
 - \bigl(I + \varepsilon \mu_0^{-1/2} \Psi^\varepsilon \operatorname{curl} \mu_0^{-1/2}\bigr) (\mathcal{L}^0_J)^{-1/2} \sin( \tau (\mathcal{L}^0_J)^{1/2}) \bigr\|_{J^{3/2}  \to H^1} 
\\
&\qquad \le \widehat{C}_9 (1+ |\tau|)^{1/2}  \varepsilon,
\end{split}
\\
	 	\label{sin_thrm1_corr6}
	 	\begin{split}
&\bigl\| (\eta^\varepsilon )^{-1} \operatorname{curl} \mu_0^{-1/2} \mathcal{L}_{J,\varepsilon}^{-1/2} \sin( \tau \mathcal{L}_{J,\varepsilon}^{1/2})
 - \bigl( (\eta^0)^{-1} +  \Sigma^\varepsilon \bigr) \operatorname{curl} \mu_0^{-1/2} (\mathcal{L}^0_J)^{-1/2} \sin( \tau (\mathcal{L}^0_J)^{1/2})
\bigr\|_{J^{3/2}  \to L_2} \\
&\qquad \le \widehat{C}_{10} (1+ |\tau|)^{1/2}  \varepsilon.
	 	\end{split}
	 	\end{gather}
For $0 \le s \le 3/2$, $\tau \in \mathbb{R}$, and $\varepsilon >0$ we have 
	 	\begin{gather}
	 	\label{sin_thrm1_corr7}
 	\begin{split}
	 	&\bigl\| {\mathbf D} \left(\mathcal{L}_{J,\varepsilon}^{-1/2} \sin( \tau \mathcal{L}_{J,\varepsilon}^{1/2})
 - \bigl(I + \varepsilon \mu_0^{-1/2} \Psi^\varepsilon \Pi_\varepsilon  \operatorname{curl} \mu_0^{-1/2}\bigr)(\mathcal{L}^0_J)^{-1/2} \sin( \tau (\mathcal{L}^0_J)^{1/2})\right)
\bigr\|_{J^s  \to L_2} 
\\
&\qquad \le \widehat{\mathcal C}_9(s) (1+ |\tau|)^{s/3}  \varepsilon^{2s/3},
\end{split}
\\
\label{sin_thrm1_corr8}
	 	\begin{split}
&\bigl\| (\eta^\varepsilon)^{-1} \operatorname{curl} \mu_0^{-1/2} \mathcal{L}_{J,\varepsilon}^{-1/2} \sin( \tau \mathcal{L}_{J,\varepsilon}^{1/2})
 - \bigl( (\eta^0)^{-1}+ \Sigma^\varepsilon  \Pi_\varepsilon  \bigr)  \operatorname{curl} \mu_0^{-1/2} (\mathcal{L}^0_J)^{-1/2} \sin( \tau (\mathcal{L}^0_J)^{1/2})
\bigr\|_{J^s  \to L_2} \\
&\qquad \le \widehat{\mathcal C}_{10}(s) (1+ |\tau|)^{s/3}  \varepsilon^{2 s/3}.
	 	\end{split}
	 	\end{gather}	 	
Under Condition  \emph{\ref{cond1}} the constants   $\widehat{C}_9$ and $\widehat{C}_{10}$ are controlled in terms of the norms $|\mu_0|$, $|\mu_0^{-1}|$,  $\|\eta\|_{L_\infty}$, $\|\eta^{-1}\|_{L_\infty}$, and the parameters of the lattice  $\Gamma$\emph{;} the constants $\widehat{\mathcal{C}}_9 (s)$ and 
$\widehat{\mathcal{C}}_{10} (s)$ depend on the same parameters and on  $s$.
Under Conditiion \emph{\ref{cond2}} these constants depend also on~$c^\circ$.
  \end{theorem}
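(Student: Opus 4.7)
The plan is to imitate the argument from the proof of Theorem \ref{cos_thrm3aa}, applying the full-operator results of Theorems \ref{th1_corrector}--\ref{th6_corrector} to a well-chosen auxiliary operator $\widehat{\mathcal L}_\varepsilon$ whose divergence-free part coincides with that of $\mathcal L_\varepsilon$ but whose ``gradient coefficient'' is constant. Specifically, let $\widehat{\mathcal L}_\varepsilon$ be the operator \eqref{L_eps} with the same $\mu_0$ and $\eta(\mathbf x)$ but with $\nu(\mathbf x)$ replaced by the constant $\widehat\nu := 2|\mu_0^{-1}|^2\|\eta^{-1}\|_{L_\infty}$. Since $\nu$ enters only the gradient block, we have $\widehat{\mathcal L}_{J,\varepsilon}=\mathcal L_{J,\varepsilon}$ and $\widehat{\mathcal L}_J^0=\mathcal L_J^0$, and the matrices $\Psi(\mathbf x)$ and $\Sigma(\mathbf x)$ entering the $J$-part of the corrector are unchanged. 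Because $\widehat\nu$ is determined by $|\mu_0^{-1}|$ and $\|\eta^{-1}\|_{L_\infty}$ alone, so are $\|\widehat\nu\|_{L_\infty}$ and $\|\widehat\nu^{-1}\|_{L_\infty}$.

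For part $1^\circ$ no spectral hypothesis is required: first apply Theorem \ref{th1_corrector} to $\widehat{\mathcal L}_\varepsilon$ and invoke Lemma \ref{lemma2} to peel off the divergence-free block, yielding \eqref{sin_thrm1_corr1} and \eqref{sin_thrm1_corr2}; then apply Theorem \ref{th4_corrector} to $\widehat{\mathcal L}_\varepsilon$ combined with Lemma \ref{lemma3} to obtain the interpolated versions \eqref{sin_thrm1_corr3} and \eqref{sin_thrm1_corr4}.

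For part $2^\circ$ the first task is to check that Conditions \ref{cond1} and \ref{cond2}, assumed for $\mathcal L_\varepsilon$, also hold for $\widehat{\mathcal L}_\varepsilon$. The operator $N(\boldsymbol\theta)$ of \eqref{N_operator}, \eqref{Mjk} and the eigenvalues $\gamma_1(\boldsymbol\theta)$, $\gamma_2(\boldsymbol\theta)$ of \eqref{solenoid_germ_spec_probl} depend only on $\mu_0$ and $\eta$, so Condition \ref{cond1}, clause $1^\circ$ of Condition \ref{cond2}, and the non-intersection part of clause $2^\circ$ transfer verbatim. The new branch $\widehat\gamma_3(\boldsymbol\theta) = \widehat\nu\langle\mu_0\boldsymbol\theta,\boldsymbol\theta\rangle$ satisfies
\[
\widehat\gamma_3(\boldsymbol\theta)\ge \widehat\nu|\mu_0^{-1}|^{-1} = 2|\mu_0^{-1}|\|\eta^{-1}\|_{L_\infty}\ge 2\gamma_j(\boldsymbol\theta),\quad j=1,2,
\]
by \eqref{gammaj_est1} and the choice of $\widehat\nu$, so $\widehat\gamma_3$ is uniformly separated from $\gamma_1,\gamma_2$; hence the three branches of the germ of $\widehat{\mathcal L}_\varepsilon$ fulfill Condition \ref{cond2} whenever $\mathcal L_\varepsilon$ does, with the corresponding gap parameter controlled from below by $\min(c^\circ,|\mu_0^{-1}|\|\eta^{-1}\|_{L_\infty})$. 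With this in hand, applying Theorem \ref{th2_corrector} (under Condition \ref{cond1}) or Theorem \ref{th3_corrector} (under Condition \ref{cond2}) to $\widehat{\mathcal L}_\varepsilon$ and using Lemma \ref{lemma2} gives \eqref{sin_thrm1_corr5}, \eqref{sin_thrm1_corr6}; similarly Theorems \ref{th5_corrector}, \ref{th6_corrector} combined with Lemma \ref{lemma3} produce \eqref{sin_thrm1_corr7}, \eqref{sin_thrm1_corr8}. The main point requiring care throughout is the bookkeeping of constants: thanks to the explicit formula for $\widehat\nu$, the dependence on $\|\nu\|_{L_\infty},\|\nu^{-1}\|_{L_\infty}$ that would come from a direct application to $\mathcal L_\varepsilon$ disappears, and the constants acquire exactly the dependence claimed in the statement.
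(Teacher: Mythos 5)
Your proposal is correct and takes essentially the same route as the paper: in the text immediately preceding Theorem \ref{cos_thrm2_J}, the authors say precisely to apply Theorems \ref{th1_corrector}--\ref{th6_corrector} to the auxiliary operator $\widehat{\mathcal L}_\varepsilon$ with constant $\widehat\nu = 2|\mu_0^{-1}|^2\|\eta^{-1}\|_{L_\infty}$ and then extract the divergence-free block via Lemmas \ref{lemma2} and \ref{lemma3}. Your verification that Conditions \ref{cond1} and \ref{cond2} transfer to $\widehat{\mathcal L}_\varepsilon$ (using that $N(\boldsymbol\theta)$, $N_0(\boldsymbol\theta)$, and the branches $\gamma_{1,2}(\boldsymbol\theta)$ depend only on $\mu_0,\eta$, while the choice of $\widehat\nu$ separates $\widehat\gamma_3$ from $\gamma_1,\gamma_2$ via \eqref{gammaj_est1}) and your bookkeeping of the constants also reproduce the paper's reasoning (cf. the proof of Theorem \ref{cos_thrm3aa}).
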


\begin{remark}
Tracking  the dependence of the estimates on $\tau$, we can obtain  qualified estimates for small  $\varepsilon$ and large  $|\tau|$, which is of independent interest.

\noindent{\emph{1)}} Under the assumptions of Theorem \emph{\ref{cos_thrm1_J}}$(1^\circ)$ or Theorem \emph{\ref{cos_thrm2_J}}$(1^\circ)$, we can take $\tau = O(\varepsilon^{-\alpha})$,
$0< \alpha < 1$. Then the norms in  \eqref{cos_thrm1_H^2_L2_est},  \eqref{sin_thrm1_H^1_L2_est},
\eqref{sin_thrm1_corr1}, \eqref{sin_thrm1_corr2} are estimated by  $O(\varepsilon^{1-\alpha})$, and the norms in  \eqref{cos_thrm1_H^s_L2_est}, \eqref{sin_thrm1_H^s_L2_est}, \eqref{sin_thrm1_corr3}, \eqref{sin_thrm1_corr4} are of order $O(\varepsilon^{s(1-\alpha)/2})$.

\noindent{\emph{2)}} Under the assumptions of Theorem \emph{\ref{cos_thrm1_J}}$(2^\circ)$ or Theorem \emph{\ref{cos_thrm2_J}}$(2^\circ)$, we can take $\tau = O(\varepsilon^{-\alpha})$,
$0< \alpha < 2$. Then the norms in  \eqref{cos_thrm1_H^3/2_L2_est},  \eqref{sin_thrm1_H^1/2_L2_est},
\eqref{sin_thrm1_corr5}, \eqref{sin_thrm1_corr6}  are estimated by  $O(\varepsilon^{1-\alpha/2})$, and the norms in  \eqref{cos111}, \eqref{sin111}, \eqref{sin_thrm1_corr7}, \eqref{sin_thrm1_corr8} are of order $O(\varepsilon^{s(2-\alpha)/3})$.
\end{remark}

\subsection{The sharpness of the results}
Applying Theorem 13.6 from \cite{DSu2} and Theorem 15.15 from \cite{DSu4}, we arrive at the following statement confirming that, in the general case, Theorems \ref{cos_thrm1} and \ref{th1_corrector} are sharp regarding the type of the operator norm.

\begin{theorem}
	\label{s<2_thrm}
	Let  $N_0 (\boldsymbol{\theta})$ be the operator defined by~\emph{(\ref{N0})}. Suppose that
	$N_0 (\boldsymbol{\theta}_0) \ne 0$ at least for one point $\boldsymbol{\theta}_0 \in \mathbb{S}^{2}$. Then the following is true. 
	
\noindent $1^\circ$. Let $0 \ne \tau \in \mathbb{R}$ and $0 \le s < 2$. Then there does not exist a constant  
 $\mathcal{C} (\tau) > 0$ such that the estimate 
	\begin{equation}
	\label{s<2_est_imp}
	\bigl\| \cos(\tau \mathcal{L}_\varepsilon^{1/2})  - \cos(\tau (\mathcal{L}^0)^{1/2}) \bigr\|_{H^s(\mathbb{R}^3) \to L_2(\mathbb{R}^3)} \le \mathcal{C}(\tau) \varepsilon
	\end{equation}
	holds for all sufficiently small $\varepsilon > 0$.

\noindent $2^\circ$. Let $0 \ne \tau \in \mathbb{R}$ and $0 \le r < 1$. Then there does not exist a constant  $\mathcal{C} (\tau) > 0$ such that the estimate
	\begin{equation}
	\label{sharp2}
	\bigl\| \mathcal{L}_\varepsilon^{-1/2} \sin(\tau \mathcal{L}_\varepsilon^{1/2})  - 
	(\mathcal{L}^0)^{-1/2} \sin(\tau (\mathcal{L}^0)^{1/2}) \bigr\|_{H^r(\mathbb{R}^3) \to L_2(\mathbb{R}^3)} \le \mathcal{C}(\tau) \varepsilon
	\end{equation}
	holds for all sufficiently small $\varepsilon > 0$.

\noindent $3^\circ$. Let $0 \ne \tau \in \mathbb{R}$ and $0 \le s < 2$. Then there does not exist a constant  $\mathcal{C} (\tau) > 0$ such that the estimate 
	\begin{equation}
	\label{sharp3}
	\bigl\| \mathcal{L}_\varepsilon^{-1/2} \sin(\tau \mathcal{L}_\varepsilon^{1/2})  - 
	\bigl( I + \varepsilon \Lambda^\varepsilon \Pi_\varepsilon  b({\mathbf D})\bigr) (\mathcal{L}^0)^{-1/2} \sin(\tau (\mathcal{L}^0)^{1/2}) \bigr\|_{H^s(\mathbb{R}^3) \to L_2(\mathbb{R}^3)} \le \mathcal{C}(\tau) \varepsilon
	\end{equation}
	holds for all sufficiently small $\varepsilon > 0$.
\end{theorem}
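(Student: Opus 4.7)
My strategy is to reduce all three assertions to the general sharpness results already established for the abstract operator class $\mathcal{A}_\varepsilon = b(\mathbf{D})^* g(\mathbf{x}) b(\mathbf{D})$. Concretely, parts $1^\circ$ and $2^\circ$ should follow from \cite[Theorem 13.6]{DSu2}, and part $3^\circ$ from \cite[Theorem 15.15]{DSu4}. Since $\mathcal{L}_\varepsilon$ is a special case of $\mathcal{A}_\varepsilon$ with $m=4$, $n=3$ and symbol $b(\boldsymbol{\xi})$ of maximal rank (see \eqref{DSu1}), those general statements apply once the two-sided bounds on $g = \operatorname{diag}(\eta^{-1}, \nu)$ following from \eqref{eta}--\eqref{nu} have been verified. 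The non-degeneracy hypothesis in the cited theorems is exactly $N_0(\boldsymbol{\theta}_0) \ne 0$, which is our assumption.

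The mechanism behind the cited sharpness theorems --- which I would rely on rather than reprove --- is a near-threshold lower bound. One constructs a family of test functions $\mathbf{f}_\varepsilon \in L_2(\mathbb{R}^3; \mathbb{C}^3)$ whose Gelfand images are concentrated near a single quasimomentum $\mathbf{k} = t \boldsymbol{\theta}_0$ with $t$ of order $\varepsilon^{\alpha}$ for a suitable $\alpha > 0$, and then one compares $\cos(\tau \lambda_l(t;\boldsymbol{\theta}_0)^{1/2})$ with $\cos(\tau \gamma_l(\boldsymbol{\theta}_0)^{1/2} t)$ via the expansion \eqref{eigenvalues_series}. Since $N_0(\boldsymbol{\theta}_0)$ is diagonal in the basis $\{\boldsymbol{\omega}_l(\boldsymbol{\theta}_0)\}$ with diagonal entries $\mu_l(\boldsymbol{\theta}_0)$ (Remark \ref{rem_N} and \eqref{2.13a}), the hypothesis $N_0(\boldsymbol{\theta}_0) \ne 0$ means $\mu_l(\boldsymbol{\theta}_0) \ne 0$ for some $l$; by Remark \ref{rem_NN} this $l$ must lie in $\{1,2\}$. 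The cubic term $\mu_l(\boldsymbol{\theta}_0) t^3$ in $\lambda_l(t;\boldsymbol{\theta}_0)$ then contributes a phase of order $\tau t^3$ to the trigonometric difference, and balancing this against the Sobolev norm of $\mathbf{f}_\varepsilon$ by choosing $\alpha$ optimally yields the threshold exponents $s = 2$ (for cosine and for the corrector approximation) and $r = 1$ (for plain sine), which the theorem asserts cannot be lowered.

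The main point requiring care --- and the only step where I expect real work --- is to confirm that the Weyl splitting does not wipe out the obstruction. Since the nonvanishing $\mu_l(\boldsymbol{\theta}_0)$ has $l \in \{1,2\}$, the defect lives on the divergence-free side; the test functions must therefore excite $\boldsymbol{\varphi}_1(t; \boldsymbol{\theta}_0)$ or $\boldsymbol{\varphi}_2(t; \boldsymbol{\theta}_0)$ rather than $\boldsymbol{\varphi}_3(t; \boldsymbol{\theta}_0)$, but this is built into the abstract construction in \cite{DSu2, DSu4}, which automatically selects the branch realising the defect. For part $3^\circ$ one also has to check that the smoothing operator $\Pi_\varepsilon$ in the corrector does not suppress the lower bound. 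This is harmless because $\Pi_\varepsilon$ acts as the identity on frequencies $|\boldsymbol{\xi}| \le r_0/\varepsilon$, whereas our $\mathbf{f}_\varepsilon$ is concentrated in Fourier space at scale $\varepsilon^{\alpha - 1} \ll \varepsilon^{-1}$ for $\alpha > 0$, so $\Pi_\varepsilon \mathbf{f}_\varepsilon = \mathbf{f}_\varepsilon$ and the lower bound survives unchanged.
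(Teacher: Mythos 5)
Your proposal matches the paper's proof: Theorem \ref{s<2_thrm} is obtained by directly applying \cite[Theorem~13.6]{DSu2} for parts $1^\circ$, $2^\circ$ and \cite[Theorem~15.15]{DSu4} for part $3^\circ$ to the operator $\mathcal{L}_\varepsilon = b(\mathbf{D})^* g^\varepsilon b(\mathbf{D})$, which falls within the admissible class $\mathcal{A}_\varepsilon$, with the hypothesis $N_0(\boldsymbol{\theta}_0) \ne 0$ being exactly the non-degeneracy condition required there. Your worry about the Weyl splitting is not needed at this stage (it only enters in the companion result for $\mathcal{L}_{J,\varepsilon}$), but you resolve it yourself, and the rest of your explanation of the underlying mechanism is an accurate paraphrase of what the cited theorems do.
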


By Remark \ref{rem2_5}, the condition $N_0 (\boldsymbol{\theta}_0) \ne 0$ is equivalent to the relations
$\gamma_1(\boldsymbol{\theta}_0) = \gamma_2(\boldsymbol{\theta}_0)$ and
$f(\boldsymbol{\theta}_0) \ne 0$, where $f(\boldsymbol{\theta})$ is defined by  \eqref{Mjk}.

Now,  from Theorem \ref{s<2_thrm} we deduce a similar result for the operator $\mathcal{L}_{J,\varepsilon}$ confirming the sharpness of Theorems 
\ref{cos_thrm1_J}($1^\circ$) and \ref{cos_thrm2_J}$(1^\circ)$.

\begin{theorem}
	\label{s<2_thrm_J}
Let $\mathcal{L}_{J,\varepsilon}$ be the part of the operator~\eqref{L_eps} in the subspace  $J(\mu_0)$, and let $\mathcal{L}_{J}^0$ be the part of the effective operator~\eqref{L0} in the subspace $J(\mu_0)$. Let  $N_0 (\boldsymbol{\theta})$ be the operator defined by~\emph{(\ref{N0})}. Suppose that   $N_0 (\boldsymbol{\theta}_0) \ne 0$ at least for one point $\boldsymbol{\theta}_0 \in \mathbb{S}^{2}$. 
	
\noindent $1^\circ$. Let $0 \ne \tau \in \mathbb{R}$ and $0 \le s < 2$. Then there does not exist a constant  $\widetilde{\mathcal{C}} (\tau) > 0$ such that the estimate 
	\begin{equation}
	\label{s<2_est_imp_J}
	\bigl\| \cos(\tau \mathcal{L}_{J,\varepsilon}^{1/2})  - \cos(\tau (\mathcal{L}^0_J)^{1/2}) \bigr\|_{J^s \to J} \le
\widetilde{\mathcal C}(\tau) \varepsilon
	\end{equation}
	holds for all sufficiently small $\varepsilon > 0$.

\noindent $2^\circ$. Let $0 \ne \tau \in \mathbb{R}$ and $0 \le r < 1$. Then there does not exist a constant  $\widetilde{\mathcal{C}} (\tau) > 0$ such that the estimate 
	\begin{equation}
	\label{sharp4}
	\bigl\| \mathcal{L}_{J,\varepsilon}^{-1/2} \sin(\tau \mathcal{L}_{J,\varepsilon}^{1/2})  - 
	(\mathcal{L}^0_J)^{-1/2} \sin(\tau (\mathcal{L}^0_J)^{1/2}) \bigr\|_{J^r \to J} \le
\widetilde{\mathcal C}(\tau) \varepsilon
	\end{equation}
	holds for all sufficiently small $\varepsilon > 0$.

\noindent $3^\circ$. Let $0 \ne \tau \in \mathbb{R}$ and $0 \le s < 2$. Then there does not exist a constant  $\widetilde{\mathcal{C}} (\tau) > 0$ such that the estimate 
	\begin{equation}
	\label{sharp5}
	\bigl\|\mathcal{L}_{J,\varepsilon}^{-1/2} \sin( \tau \mathcal{L}_{J,\varepsilon}^{1/2})
 - \bigl(I + \varepsilon \mu_0^{-1/2} \Psi^\varepsilon \operatorname{curl} \mu_0^{-1/2} \bigr)(\mathcal{L}^0_J)^{-1/2} \sin( \tau (\mathcal{L}^0_J)^{1/2})
\bigr\|_{J^s  \to H^1} 
 \le \widetilde{\mathcal C}(\tau)  \varepsilon
 \end{equation}
holds for all sufficiently small $\varepsilon > 0$.
\end{theorem}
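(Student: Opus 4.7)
The plan is to deduce the three statements from the corresponding sharpness results for the full operator $\mathcal{L}_\varepsilon$ (Theorem~\ref{s<2_thrm}) by a contradiction argument, exploiting the fact that the gradient part $\mathcal{L}_{G,\varepsilon}$ always admits the improved estimates of Corollaries~\ref{corollary} and~\ref{corollary25}. The underlying heuristic is that under $N_0(\boldsymbol{\theta}_0)\ne 0$ the obstruction to better-than-$\varepsilon$ behavior sits in the divergence-free part: by Remarks~\ref{rem_NN} and~\ref{rem2_5}, $\mu_3(\boldsymbol{\theta})\equiv 0$ while the potentially nonzero $\mu_{1,2}(\boldsymbol{\theta}_0)$ live in $J^0_{\boldsymbol{\theta}_0}$. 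The improved G-part estimates themselves come from applying the Condition~\ref{cond1} results to the auxiliary operator $\check{\mathcal{L}}_\varepsilon$ with constant $\check\eta$, for which $N(\boldsymbol{\theta})\equiv 0$ by Remark~\ref{N=0}$(1^\circ)$.

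For parts $1^\circ$ and $2^\circ$ I argue by contradiction. Fix $s_0\in[3/2,2)$ (respectively $r_0\in[1/2,1)$) and suppose \eqref{s<2_est_imp_J} (respectively \eqref{sharp4}) holds at order $\varepsilon$ for this exponent. By Corollary~\ref{corollary}, the G-part satisfies the same inequality in $G^{3/2}\to G$ (respectively $G^{1/2}\to G$) at order $(1+|\tau|)^{1/2}\varepsilon$, and Sobolev monotonicity upgrades this to $G^{s_0}\to G$ (respectively $G^{r_0}\to G$). Combining via Lemma~\ref{lemma} yields an $H^{s_0}\to L_2$ (respectively $H^{r_0}\to L_2$) estimate for $\mathcal{L}_\varepsilon$ at order $\varepsilon$, contradicting Theorem~\ref{s<2_thrm}. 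The remaining range $s\in[0,3/2)$ (respectively $r\in[0,1/2)$) follows from the monotonicity $\|T\|_{J^s\to J}\ge\|T\|_{J^{s_0}\to J}$ for $s\le s_0$: failure at some index automatically propagates to every smaller index.

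Part $3^\circ$ follows the same scheme, but the correctors in \eqref{sharp5} and \eqref{sharp3} differ: the former uses $\mu_0^{-1/2}\Psi^\varepsilon\operatorname{curl}\mu_0^{-1/2}$ in the $J^s\to H^1$ norm, whereas the latter uses $\Lambda^\varepsilon\Pi_\varepsilon b(\mathbf{D})$ in the $H^s\to L_2$ norm. Assuming \eqref{sharp5} at some $s_0\in[3/2,2)$, Corollary~\ref{corollary25} produces the companion estimate for the G-part with corrector $\mu_0^{1/2}(\nabla\rho)^\varepsilon\operatorname{div}\mu_0^{1/2}$ in $G^{s_0}\to H^1$ at order $\varepsilon$. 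Lemma~\ref{lemma2}$(1^\circ)$ then assembles these into an $H^{s_0}\to H^1$ (hence $H^{s_0}\to L_2$) estimate for $\mathcal{L}_\varepsilon$ with the non-smoothed corrector $\Lambda^\varepsilon b(\mathbf{D})$. It remains to insert $\Pi_\varepsilon$ at the cost of an admissible $O(\varepsilon)$ remainder: the difference $\varepsilon\Lambda^\varepsilon(I-\Pi_\varepsilon)b(\mathbf{D})(\mathcal{L}^0)^{-1/2}\sin(\tau(\mathcal{L}^0)^{1/2})$ is controlled in the $H^{s_0}\to L_2$ norm by combining the multiplier property of $\Lambda$ from $H^{3/2}$ to $H^1$ valid for $d=3$ (Proposition~14.25 of \cite{DSu4}) with the Fourier-cutoff bound $\|I-\Pi_\varepsilon\|_{H^\sigma\to L_2}\le r_0^{-\sigma}\varepsilon^\sigma$ already used in the proof of Theorem~\ref{th4_corrector}. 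Once this replacement is justified, the resulting estimate contradicts Theorem~\ref{s<2_thrm}$(3^\circ)$, and $s\in[0,3/2)$ is again covered by Sobolev monotonicity.

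The main obstacle I anticipate is precisely this corrector-matching step in part $3^\circ$: the bookkeeping for $\Lambda^\varepsilon(I-\Pi_\varepsilon)b(\mathbf{D})$ acting on $(\mathcal{L}^0)^{-1/2}\sin(\tau(\mathcal{L}^0)^{1/2})$ must be carried out carefully, combining the multiplier bounds with the mapping properties of $b(\mathbf{D})$ and of the effective propagator, so as not to destroy the polynomial $(1+|\tau|)$-growth that the contradiction can tolerate. Parts $1^\circ$ and $2^\circ$, by contrast, reduce cleanly to Theorem~\ref{s<2_thrm} via Lemma~\ref{lemma} and are mostly a matter of correctly tracking Sobolev indices.
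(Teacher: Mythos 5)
Your proposal is correct and follows essentially the same route as the paper: a contradiction argument that transfers the improved estimate for the gradient part (Corollaries~\ref{corollary}, \ref{corollary25}) across Lemma~\ref{lemma} (resp.\ Lemma~\ref{lemma2}) to upgrade the assumed $J$-part estimate to a full-operator estimate, contradicting Theorem~\ref{s<2_thrm}, with the range $s\in[0,3/2)$ (resp.\ $r\in[0,1/2)$) handled by monotonicity of the $(H^s\to L_2)$ operator norm in $s$. The only difference is in part $3^\circ$: the paper does not re-derive the corrector-matching bound $\bigl\|\varepsilon\Lambda^\varepsilon(I-\Pi_\varepsilon)b(\mathbf{D})(\mathcal{L}^0)^{-1/2}\sin(\tau(\mathcal{L}^0)^{1/2})\bigr\|_{H^{3/2}\to H^1}\le C\varepsilon$ from scratch; it simply imports it verbatim as \eqref{sharp7} from \cite[Section 14.7]{DSu4}. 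You flag this step as the delicate point and sketch a proof via the multiplier property of $\Lambda$ and the cutoff bound \eqref{sin_est_corrector8c}; that sketch is plausible but, as you note yourself, the $\varepsilon$-dependence of the multiplier norm of $[\Lambda^\varepsilon]$ under scaling needs to be tracked to make it rigorous. Since the estimate is already available in the literature and is cited as such in the text, invoking \eqref{sharp7} directly (as the paper does) closes the argument without further work.
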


\begin{proof} Let us check statement $1^\circ$.
 It suffices to assume that  $3/2 \le s <2$.
We prove by contradiction. Suppose that for some  $3/2 \le s <2$ and \hbox{$\tau \ne 0$} estimate 
\eqref{s<2_est_imp_J} holds. By Corollary \ref{corollary}, estimate 
	\begin{equation}
	\label{s<2_est_imp_G}
	\bigl\| \cos(\tau \mathcal{L}_{G,\varepsilon}^{1/2})  - \cos(\tau (\mathcal{L}^0_G)^{1/2}) \bigr\|_{G^s \to G} \le
\check{\mathcal C}(\tau) \varepsilon
	\end{equation}
	is also valid. According to Lemma \ref{lemma}, relations \eqref{s<2_est_imp_J} and \eqref{s<2_est_imp_G} imply 
 \eqref{s<2_est_imp} with the constant $\mathcal{C}(\tau) = \max\{\widetilde{\mathcal C}(\tau),\check{\mathcal C}(\tau)\}$. But this contradicts statement $1^\circ$ of Theorem \ref{s<2_thrm}.
 
 Statement $2^\circ$ is proved similarly.
 
 Let us check statement $3^\circ$. It suffices to assume that $3/2 \le s <2$.
Suppose that for some $3/2 \le s <2$ and \hbox{$\tau \ne 0$} estimate 
\eqref{sharp5} is satisfied for sufficiently small $\varepsilon$.  
By Corollary \ref{corollary25}, estimate \eqref{sin_est_corrector3} holds.
Then from Lemma \ref{lemma2} it follows that the estimate
\begin{equation}
\label{sharp6}
	\bigl\|\mathcal{L}_{\varepsilon}^{-1/2} \sin( \tau \mathcal{L}_{\varepsilon}^{1/2})
 - \bigl(I + \varepsilon \Lambda^\varepsilon b({\mathbf D}) \bigr)(\mathcal{L}^0)^{-1/2} \sin( \tau (\mathcal{L}^0)^{1/2})
\bigr\|_{H^s  \to H^1} 
 \le {\mathcal C}(\tau)  \varepsilon
\end{equation}
holds for sufficiently small $\varepsilon$. 
It remains to take into account the following estimate proved in  \cite[Section 14.7]{DSu4}:
\begin{equation}
\label{sharp7}
	\bigl\| \varepsilon \Lambda^\varepsilon (I - \Pi_\varepsilon) b({\mathbf D}) (\mathcal{L}^0)^{-1/2} \sin( \tau (\mathcal{L}^0)^{1/2})
\bigr\|_{H^{3/2}  \to H^1}  \le C  \varepsilon,\quad 0< \varepsilon  \le 1.
\end{equation}
By \eqref{sharp6} and \eqref{sharp7}, we conclude that estimate \eqref{sharp3} is valid for sufficiently small 
$\varepsilon$. But this contradicts statement  $3^\circ$ of Theorem \ref{s<2_thrm}.
 \end{proof}

Applying Theorem 15.17 from \cite{DSu4}, we obtain the following result confirming the sharpness of Theorems  \ref{cos_thrm1} and \ref{th1_corrector} regarding the dependence of the estimates on  $\tau$.

\begin{theorem}
	\label{th_time_sharp}
	Let $N_0 (\boldsymbol{\theta})$ be the operator defined by~\emph{(\ref{N0})}. Suppose that $N_0 (\boldsymbol{\theta}_0) \ne 0$ at least for one point $\boldsymbol{\theta}_0 \in \mathbb{S}^{2}$. 
	Then the following is true. 
	
\noindent $1^\circ$. Let $s \ge 2$. Then there does not exist a positive function $\mathcal{C} (\tau)$ such that 
$\lim_{\tau \to \infty} \mathcal{C}(\tau)/ |\tau| =0$ and estimate  \eqref{s<2_est_imp} holds for $\tau \in {\mathbb R}$ and sufficiently small $\varepsilon>0$.

\noindent $2^\circ$. Let $r \ge 1$. Then there does not exist a positive function $\mathcal{C} (\tau)$ such that $\lim_{\tau \to \infty} \mathcal{C}(\tau)/ |\tau| =0$ and estimate \eqref{sharp2} holds for $\tau \in {\mathbb R}$ and sufficiently small $\varepsilon>0$.

\noindent $3^\circ$. Let $s \ge 2$. Then there does not exist a positive function $\mathcal{C} (\tau)$ such that 
$\lim_{\tau \to \infty} \mathcal{C}(\tau)/ |\tau| =0$   and estimate \eqref{sharp3} holds for $\tau \in {\mathbb R}$ 
and sufficiently small $\varepsilon>0$.
\end{theorem}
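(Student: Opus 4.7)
The plan is to deduce Theorem \ref{th_time_sharp} as a direct consequence of \cite[Theorem~15.17]{DSu4}, the analogous sharpness statement for the abstract class of operators $\mathcal{A}_\varepsilon = b(\mathbf{D})^* g^\varepsilon(\mathbf{x}) b(\mathbf{D})$. The operator $\mathcal{L}_\varepsilon$ fits this framework by the factorisation given in Subsection \ref{Subsection Operator L}, and the spectral germ $S(\boldsymbol{\theta})$ together with the associated operators $N(\boldsymbol{\theta})$, $N_0(\boldsymbol{\theta})$, $N_*(\boldsymbol{\theta})$ introduced in \S2 are exactly the abstract objects used in \cite{DSu4}. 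In particular, the hypothesis $N_0(\boldsymbol{\theta}_0)\ne 0$ is the hypothesis of \cite[Theorem~15.17]{DSu4} verbatim, so once these identifications have been checked, the three assertions transfer immediately.

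Under the hood, the sharpness is obtained by a contradiction argument via the Floquet--Bloch decomposition. Assume for contradiction that item $1^\circ$ fails, i.e. estimate \eqref{s<2_est_imp} with $s\ge 2$ holds for a sequence $\tau_n \to \infty$ and some $\mathcal{C}(\tau_n)$ satisfying $\mathcal{C}(\tau_n)/|\tau_n|\to 0$. By Remark \ref{rem_N}, the condition $N_0(\boldsymbol{\theta}_0)\ne 0$ guarantees that $\mu_l(\boldsymbol{\theta}_0)\ne 0$ for some index $l$. Construct test functions whose Gelfand image concentrates near the quasimomentum $\mathbf{k}=t\boldsymbol{\theta}_0$ with $t$ of order $\varepsilon$, projected on the analytic eigenbranch $\boldsymbol{\varphi}_l(t;\boldsymbol{\theta}_0)$. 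From expansion \eqref{eigenvalues_series} one has
\begin{equation*}
\lambda_l(t;\boldsymbol{\theta}_0)^{1/2} = \gamma_l(\boldsymbol{\theta}_0)^{1/2} t + \frac{\mu_l(\boldsymbol{\theta}_0)}{2\gamma_l(\boldsymbol{\theta}_0)^{1/2}}\, t^2 + O(t^3),
\end{equation*}
so on this mode the phase difference between $\cos(\tau \mathcal{L}_\varepsilon^{1/2})$ and $\cos(\tau (\mathcal{L}^0)^{1/2})$ is of order $\tau \mu_l(\boldsymbol{\theta}_0)\varepsilon^2$. At the threshold exponent $s=2$ the $H^s\to L_2$ norm captures precisely this discrepancy, yielding a lower bound of order $|\tau|\varepsilon$ along the sequence, in contradiction with the assumed $o(|\tau|)\cdot\varepsilon$ upper bound.

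Items $2^\circ$ and $3^\circ$ are handled along the same lines, the only changes being the Sobolev weight attached to the concentrated test function (matching the thresholds $r=1$ and $s=2$ respectively) and, for item $3^\circ$, verifying that the corrector term $\varepsilon \Lambda^\varepsilon \Pi_\varepsilon b(\mathbf{D})(\mathcal{L}^0)^{-1/2}\sin(\tau(\mathcal{L}^0)^{1/2})$ does not cancel the cubic phase mismatch — which is clear because $\Lambda^\varepsilon$ only contributes to the first-order $t$-correction in the eigenfunction expansion \eqref{eigenvectors_series}, not to the $t^3$ correction in the eigenvalue. The main (and essentially only) obstacle is the bookkeeping: one must construct the test functions so that they genuinely activate an eigenbranch with $\mu_l(\boldsymbol{\theta}_0)\ne 0$, which requires a careful choice of analytic projection and control of the multiplicity structure of the germ near $\boldsymbol{\theta}_0$. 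Since this technical work is already carried out in full generality in \cite{DSu4}, the present theorem is obtained by direct citation.
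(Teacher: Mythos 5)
Your proposal matches the paper's proof exactly: the theorem is obtained by direct citation of \cite[Theorem~15.17]{DSu4}, after identifying $\mathcal{L}_\varepsilon = b(\mathbf{D})^*g^\varepsilon b(\mathbf{D})$ with the abstract operator class and noting that $N_0(\boldsymbol{\theta})$, $N_*(\boldsymbol{\theta})$ of \S2 coincide with the abstract threshold objects of \cite{DSu4}. The additional sketch you give of the underlying contradiction argument (concentration near $\mathbf{k}=t\boldsymbol{\theta}_0$ on the branch with $\mu_l(\boldsymbol{\theta}_0)\ne 0$ and the resulting $|\tau|\varepsilon^2$ phase mismatch at $s=2$) correctly describes the mechanism inside the cited theorem but is not needed here, as the paper also leaves it to the reference.
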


Theorem \ref{th_time_sharp} implies a similar result for the operator $\mathcal{L}_{J,\varepsilon}$ confirming that Theorems 
\ref{cos_thrm1_J}($1^\circ$) and \ref{cos_thrm2_J}($1^\circ$) are sharp regarding the dependence of the estimates on $\tau$.

\begin{theorem}
	\label{th_time_sharp2}
Let $\mathcal{L}_{J,\varepsilon}$ be the part of the operator~\eqref{L_eps} in the  subspace $J(\mu_0)$, and let $\mathcal{L}_{J}^0$ be the part of the effective operator~\eqref{L0} in the subspace  $J(\mu_0)$. Let  $N_0 (\boldsymbol{\theta})$ be the operator defined by~\emph{(\ref{N0})}. Suppose that 
 $N_0 (\boldsymbol{\theta}_0) \ne 0$ at least for one point $\boldsymbol{\theta}_0 \in \mathbb{S}^{2}$. 

\noindent $1^\circ$. Let $s \ge 2$. Then there does not exist a positive function $\widetilde{\mathcal{C}} (\tau)$ such that $\lim_{\tau \to \infty} \widetilde{\mathcal{C}}(\tau)/ |\tau| =0$ and estimate  \eqref{s<2_est_imp_J} holds for $\tau \in {\mathbb R}$ and sufficiently small $\varepsilon>0$.

\noindent $2^\circ$. Let $r \ge 1$. Then there does not exist a positive function $\widetilde{\mathcal{C}} (\tau)$ such that 
$\lim_{\tau \to \infty} \widetilde{\mathcal{C}}(\tau)/ |\tau| =0$ and estimate \eqref{sharp4} holds for $\tau \in {\mathbb R}$ and sufficiently small $\varepsilon >0$.

\noindent $3^\circ$. Let $s \ge 2$. Then there does not exist a positive function $\widetilde{\mathcal{C}} (\tau)$ such that 
$\lim_{\tau \to \infty} \widetilde{\mathcal{C}}(\tau)/ |\tau| =0$  and estimate \eqref{sharp5} holds for $\tau \in {\mathbb R}$ and sufficiently small $\varepsilon >0$.	
\end{theorem}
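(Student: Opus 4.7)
The strategy mirrors the proof of Theorem \ref{s<2_thrm_J} almost verbatim, only the contradicted statement changes: instead of deriving a bound $\mathcal{C}(\tau)\varepsilon$ in the full-space setting (contradicting Theorem \ref{s<2_thrm}), I will derive the same bound with a constant $\mathcal{C}(\tau)$ of sublinear growth in $|\tau|$, and contradict Theorem \ref{th_time_sharp}. The fundamental device is the Weyl decomposition \eqref{Weyl_decomp}: Lemma \ref{lemma} (for items $1^\circ$, $2^\circ$) and Lemma \ref{lemma2} (for item $3^\circ$) split any full-space approximation into its divergence-free and gradient parts, and the gradient part automatically satisfies a stronger bound because $\mathcal{L}_{G,\varepsilon}$ is of the type covered by Condition \ref{cond1} (see Remark \ref{N=0}, $1^\circ$) thanks to the fact that the constant coefficient $\check\eta={\mathbf 1}_3$ makes $N(\boldsymbol{\theta})=0$.

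For part $1^\circ$, I argue by contradiction: assume $\widetilde{\mathcal{C}}(\tau)$ with $\widetilde{\mathcal{C}}(\tau)/|\tau|\to 0$ exists so that \eqref{s<2_est_imp_J} holds for some $s\ge 2$. From Corollary \ref{corollary} applied at Sobolev index $3/2$ (and then using the trivial embedding $J^s \hookrightarrow J^{3/2}$ for $s\ge 3/2$) I get
\begin{equation*}
\bigl\| \cos(\tau \mathcal{L}_{G,\varepsilon}^{1/2}) - \cos(\tau (\mathcal{L}_G^0)^{1/2}) \bigr\|_{G^s \to G} \le \check{C}_3 (1+|\tau|)^{1/2} \varepsilon.
\end{equation*}
Lemma \ref{lemma}$(1^\circ)$ then yields the corresponding full-space estimate with constant $\mathcal{C}(\tau):=\max\{\widetilde{\mathcal{C}}(\tau), \check{C}_3(1+|\tau|)^{1/2}\}$, which still satisfies $\mathcal{C}(\tau)/|\tau|\to 0$, contradicting Theorem \ref{th_time_sharp}$(1^\circ)$. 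Part $2^\circ$ is handled identically with the $\sin$-version of Corollary \ref{corollary} and Lemma \ref{lemma}$(2^\circ)$, contradicting Theorem \ref{th_time_sharp}$(2^\circ)$.

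Part $3^\circ$ requires one additional ingredient because the correctors differ by a smoothing factor $\Pi_\varepsilon$. Assuming \eqref{sharp5} with $s\ge 2$ and $\widetilde{\mathcal{C}}(\tau)/|\tau|\to 0$, I combine it with estimate \eqref{sin_est_corrector3} from Corollary \ref{corollary25} (again extended from $s=3/2$ to $s\ge 3/2$ by Sobolev embedding) via Lemma \ref{lemma2}$(1^\circ)$ to obtain the full-space $H^s \to H^1$ estimate without $\Pi_\varepsilon$ in the corrector, with constant of the form $\mathcal{C}(\tau)=\max\{\widetilde{\mathcal{C}}(\tau),\check C_9(1+|\tau|)^{1/2}\}$. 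Then the bound \eqref{sharp7} (which is $\tau$-independent) allows me to insert $\Pi_\varepsilon$ into the corrector at the cost of a harmless constant summand; dropping from $H^1$ to $L_2$ on the target side only weakens the norm. The result contradicts Theorem \ref{th_time_sharp}$(3^\circ)$.

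The main obstacle is purely bookkeeping: verifying that all auxiliary bounds acquired along the way (the gradient-part estimate from Corollary \ref{corollary} or \ref{corollary25}, and the smoothing correction \eqref{sharp7}) grow in $|\tau|$ strictly slower than linearly. Once this is checked, each auxiliary term is absorbed into a single function $\mathcal{C}(\tau)$ satisfying $\mathcal{C}(\tau)/|\tau|\to 0$, and the reduction to Theorem \ref{th_time_sharp} is immediate. There is no new analytic content beyond what was already used in the proof of Theorem \ref{s<2_thrm_J}; only the growth restriction on the constant is tightened.
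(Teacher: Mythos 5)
Your proposal is correct and follows essentially the same route as the paper's own proof: argue by contradiction, use Corollary \ref{corollary} (resp.\ \ref{corollary25}) to supply the gradient-part estimate with constant $\check C_3(1+|\tau|)^{1/2}$ (resp.\ $\check C_9(1+|\tau|)^{1/2}$), combine via Lemma \ref{lemma} (resp.\ Lemma \ref{lemma2} and \eqref{sharp7}) to get a full-space estimate with a still-sublinear constant, and then contradict Theorem \ref{th_time_sharp}. Your explicit bookkeeping remark about checking that all auxiliary bounds are sublinear in $|\tau|$ is exactly the point the paper makes by writing $\mathcal{C}(\tau)=\max\{\widetilde{\mathcal C}(\tau),\check{C}_j(1+|\tau|)^{1/2}\}$ and noting $\lim_{\tau\to\infty}\mathcal{C}(\tau)/|\tau|=0$.
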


\begin{proof}
Let us check statement $1^\circ$. We prove by contradiction. Suppose that for some 
$s \ge 2$ there exists a positive function 
$\widetilde{\mathcal{C}} (\tau)$ such that 
$\lim_{\tau \to \infty} \widetilde{\mathcal{C}}(\tau)/ |\tau| =0$  and estimate \eqref{s<2_est_imp_J} holds for $\tau \in {\mathbb R}$ and sufficiently small $\varepsilon>0$.
	By Corollary \ref{corollary}, the estimate 
	\begin{equation}
	\label{s<2_est_imp_G2}
	\bigl\| \cos(\tau \mathcal{L}_{G,\varepsilon}^{1/2})  - \cos(\tau (\mathcal{L}^0_G)^{1/2}) \bigr\|_{G^s \to G} \le
\check{C}_3(1+ |\tau|)^{1/2} \varepsilon
	\end{equation}
	is also satisfied.
 By Lemma \ref{lemma}, relations \eqref{s<2_est_imp_J} and \eqref{s<2_est_imp_G2} imply 
 \eqref{s<2_est_imp} with  $\mathcal{C}(\tau) = \max\{\widetilde{\mathcal C}(\tau),\check{C}_3(1+|\tau|)^{1/2}\}$. We have $\lim_{\tau \to \infty} {\mathcal{C}}(\tau)/ |\tau| =0$.
 But this contradicts  statement  $1^\circ$ of Theorem \ref{th_time_sharp}.
 
 Statement $2^\circ$ is proved similarly.
 
 Let us check statement $3^\circ$. Suppose that for some $s \ge 2$ there exists a positive function
$\widetilde{\mathcal{C}} (\tau)$ such that 
$\lim_{\tau \to \infty} \widetilde{\mathcal{C}}(\tau)/ |\tau| =0$ and  estimate \eqref{sharp5} holds for $\tau \in {\mathbb R}$ and sufficiently small $\varepsilon >0$.	
By Corollary \ref{corollary25}, estimate  \eqref{sin_est_corrector3} is satisfied.
Combining this with Lemma \ref{lemma2}, we conclude that  \eqref{sharp6} holds with
 $\mathcal{C}(\tau) = \max\{\widetilde{\mathcal C}(\tau),\check{C}_9(1+|\tau|)^{1/2}\}$. We have  
$\lim_{\tau \to \infty} {\mathcal{C}}(\tau)/ |\tau| =0$. It remains to take  \eqref{sharp7} into account.
Relations  \eqref{sharp6} and \eqref{sharp7} imply estimate \eqref{sharp3}. But this contradicts statement $3^\circ$ of Theorem \ref{th_time_sharp}.
\end{proof}

\subsection{Examples}

Concrete examples of both situations were given in  \cite[\S 4]{DSu3}. 

1) Let $\Gamma = (2 \pi \mathbb{Z})^3$. Assume that  $\mu_0 = \mathbf{1}$.
Suppose that the matrix $\eta(\mathbf{x})$ depends only on $x_1$ and is given by 
	\begin{equation*}
	\eta(\mathbf{x}) = 	\begin{pmatrix}
	\eta_1(x_1) & \eta_2(x_1) & 0 \\
	\eta_2(x_1) & \eta_3(x_1) & 0 \\
	0 & 0 & \eta_4(x_1)	
	\end{pmatrix},
	\end{equation*}
	where $\eta_j(x_1)$, $j = 1,2,3,4$,~are $(2 \pi)$-periodic real-valued functions. It is assumed that the 
	matrix-valued function $\eta(\mathbf{x})$ is bounded and uniformly positive definite.
In \cite[Section 4.1]{DSu3}, it was shown that the functions $\eta_j(x_1)$, $j=1,2,3,4,$ can be chosen so that 
 $\gamma_1(\boldsymbol{\theta}_0) = \gamma_2(\boldsymbol{\theta}_0)$
and $\mu_1(\boldsymbol{\theta}_0) = -\mu_2(\boldsymbol{\theta}_0) \ne 0$
for some  $\boldsymbol{\theta}_0 \in \mathbb{S}^2$.
Then  $N_0(\boldsymbol{\theta}_0) \ne 0$. We can apply general results (Theorems \ref{cos_thrm1_J}($1^\circ$) and \ref{cos_thrm2_J}$(1^\circ)$), and they are sharp both regarding the norm type and regarding the dependence of the estimates on  $\tau$.

2) Recall that some cases where $N(\boldsymbol{\theta}) \equiv 0$ were distinguished in Remark \ref{N=0}.
One more example borrowed from~\cite{Zh1} was discussed in \cite[Section 4.2]{DSu3}. 
Suppose that   $\mu_0 = \mathbf{1}$.
	Let $\Gamma = (2\pi \mathbb{Z})^3$, and choose the cell centred at zero: $\Omega = (-\pi, \pi)^3$. 
Let $B_1 = \{|\mathbf{x}| \le 1\}$~be the unit ball, $B_\vartheta$~be the ball concentric with $B_1$ and such that
 $|B_\vartheta| = \vartheta |B_1|$, $0 < \vartheta < 1$. Let $\eta(\mathbf{x})$ be the $\Gamma$-periodic matrix-valued function, on the cell given by 
 	\begin{equation*}
	\eta(\mathbf{x}) = a(\mathbf{x})I, \qquad  a(\mathbf{x}) = \left\lbrace \begin{aligned}
	&\kappa , & &\text{for} \; \mathbf{x} \in B_\vartheta,\\
	&1, & &\text{for} \; \mathbf{x} \in B_1 \setminus B_\vartheta,\\
	&1 + \tfrac{3 \vartheta (\kappa -1)}{3 + (1 - \vartheta)(\kappa -1)}, &  &\text{for} \; \mathbf{x} \in \Omega \setminus B_1,
	\end{aligned} \right.
	\end{equation*}
where $\kappa >0$.
As was shown in  \cite[Section 4.2]{DSu3}, in this example  $N (\boldsymbol{\theta}) = 0$ for any $\boldsymbol{\theta}\in \mathbb{S}^2$.
 
 In the case where $N (\boldsymbol{\theta}) \equiv 0$,  the general results can be improved: 
 we can apply Theorems~ \ref{cos_thrm1_J}($2^\circ$) and \ref{cos_thrm2_J}$(2^\circ)$.

\section{Homogenization of the nonstationary Maxwell system}

\subsection{Statement of the problem\label{sec4.1}}

Suppose that the dielectric permittivity is given by the rapidly oscillating matrix $\eta^\varepsilon ({\mathbf x})$,
and the magnetic permeability is equal to the constant matrix $\mu_0$.
Suppose that  $\eta({\mathbf x})$ and $\mu_0$ satisfy the assumptions of Subsection \ref{Subsection Operator L}.
 We use the following notation for the physical fields:

${\mathbf u}_\varepsilon ({\mathbf x},\tau)$ is the intensity of the electric field;

${\mathbf w}_\varepsilon ({\mathbf x},\tau) = \eta^\varepsilon({\mathbf x}) {\mathbf u}_\varepsilon({\mathbf x},\tau)$ is the electric displacement vector;

${\mathbf v}_\varepsilon ({\mathbf x},\tau)$ is the intensity of the magnetic field;

${\mathbf z}_\varepsilon({\mathbf x},\tau) = \mu_0 {\mathbf v}_\varepsilon ({\mathbf x},\tau)$ is the magnetic displacement vector.

Consider the following Cauchy problem for the nonstationary Maxwell system:
\begin{equation}
\label{63}
\left\{
\begin{aligned}
&\partial_\tau {\mathbf u}_\varepsilon({\mathbf x},\tau) = (\eta^\varepsilon ({\mathbf x}))^{-1}  \operatorname{curl} 
 {\mathbf v}_\varepsilon ({\mathbf x},\tau),
\quad \operatorname{div} \, \eta^\varepsilon( {\mathbf x}) {\mathbf u}_\varepsilon ({\mathbf x},\tau) =0,
\quad {\mathbf x} \in {\mathbb R}^3,\ \tau \in {\mathbb R};
\\
&\partial_\tau {\mathbf v}_\varepsilon({\mathbf x},\tau) = - \mu_0^{-1} \operatorname{curl}  {\mathbf u}_\varepsilon ({\mathbf x},\tau),
\quad  \operatorname{div} \, \mu_0 {\mathbf v}_\varepsilon ({\mathbf x},\tau) =0,
\quad {\mathbf x} \in {\mathbb R}^3,\ \tau \in {\mathbb R};
\\
&{\mathbf u}_\varepsilon({\mathbf x},0) = (P_\varepsilon {\mathbf f})({\mathbf x}),\quad  {\mathbf v}_\varepsilon({\mathbf x},0) = {\boldsymbol \phi}({\mathbf x}), \quad {\mathbf x} \in {\mathbb R}^3.
\end{aligned}
\right.
\end{equation}
Here ${\boldsymbol \phi} \in L_2({\mathbb R}^3;{\mathbb C}^3)$ and $ \operatorname{div} \mu_0 {\boldsymbol \phi}({\mathbf x}) =0$ 
(this relation is understood in the sense of distributions). Next, ${\mathbf f} \in L_2({\mathbb R}^3;{\mathbb C}^3)$ and $P_\varepsilon$ is the orthogonal projection of the weighted space $L_2({\mathbb R}^3;{\mathbb C}^3; \eta^\varepsilon )$ onto the subspace  
$$
\{ {\mathbf u} \in L_2({\mathbb R}^3;{\mathbb C}^3): \ \operatorname{div} \, \eta^\varepsilon ({\mathbf x}) {\mathbf u}({\mathbf x}) =0\}.
$$
The projection  $P_\varepsilon$ acts as follows: 
$(P_\varepsilon {\mathbf f})({\mathbf x}) = {\mathbf f}({\mathbf x}) - \nabla \omega_\varepsilon ({\mathbf x})$, where $\omega_\varepsilon$ is the solution of the equation
$\operatorname{div} \, \eta^\varepsilon  \nabla \omega_\varepsilon = \operatorname{div} \, \eta^\varepsilon {\mathbf f}$ (understood in the generalized sense):  $\omega_\varepsilon \in L_{2,\,\text{loc}}({\mathbb R}^3)$, $\nabla \omega_\varepsilon \in L_2({\mathbb R}^3;{\mathbb C}^3)$, and  
$$
\intop_{{\mathbb R}^3} \langle \eta^\varepsilon ({\mathbf x}) ({\mathbf f}({\mathbf x}) - \nabla \omega_\varepsilon({\mathbf x})), \nabla \chi( {\mathbf x})\rangle \, d {\mathbf x}=0,  
\quad \chi \in L_{2,\,\text{loc}} ({\mathbb R}^3),\ \nabla \chi \in L_2({\mathbb R}^3;{\mathbb C}^3).
$$

\subsection{The homogenized Maxwell system\label{sec4.2}}

We use the following notation for the homogenized physical fields:

${\mathbf u}_0({\mathbf x},\tau)$ is the intensity of the electric field;

${\mathbf w}_0({\mathbf x},\tau) = \eta^0 {\mathbf u}_0({\mathbf x},\tau)$ is the electric displacement vector;

${\mathbf v}_0({\mathbf x},\tau)$ is the intensity of the magnetic field;

${\mathbf z}_0({\mathbf x},\tau) = \mu_0 {\mathbf v}_0({\mathbf x},\tau)$ is the magnetic displacement vector.

Here $\eta^0$ is the effective matrix defined in Subsection \ref{sec_effective}.

The homogenized problem is given by 
\begin{equation}
\label{64}
\left\{
\begin{aligned}
&\partial_\tau {\mathbf u}_0({\mathbf x},\tau) = (\eta^0)^{-1}  \operatorname{curl}  {\mathbf v}_0({\mathbf x},\tau),
\quad \operatorname{div} \, \eta^0 {\mathbf u}_0({\mathbf x},\tau) =0, \quad {\mathbf x} \in {\mathbb R}^3,\ \tau \in {\mathbb R};
\\
&\partial_\tau {\mathbf v}_0({\mathbf x},\tau) = - \mu_0^{-1} \operatorname{curl}  {\mathbf u}_0({\mathbf x},\tau),\quad
 \operatorname{div} \, \mu_0 {\mathbf v}_0({\mathbf x},\tau) =0, \quad
{\mathbf x} \in {\mathbb R}^3,\ \tau \in {\mathbb R};
\\
&{\mathbf u}_0({\mathbf x},0) = (P_0 {\mathbf f})({\mathbf x}),\quad  {\mathbf v}_0({\mathbf x},0) = {\boldsymbol \phi}({\mathbf x}), \quad {\mathbf x} \in {\mathbb R}^3.
\end{aligned}
\right.
\end{equation}
Here  $P_0$ is the orthogonal projection of the weighted space $L_2({\mathbb R}^3;{\mathbb C}^3; \eta^0)$ onto the subspace 
$$
\{ {\mathbf u} \in L_2({\mathbb R}^3;{\mathbb C}^3): \ \operatorname{div} \, \eta^0 {\mathbf u}({\mathbf x}) =0\}.
$$
The projection $P_0$ acts as follows: 
$(P_0 {\mathbf f})({\mathbf x}) = {\mathbf f} ({\mathbf x}) - \nabla \omega_0({\mathbf x})$, where $\omega_0$ is the solution of the equation
$\operatorname{div} \, \eta^0 \nabla \omega_0 = \operatorname{div} \, \eta^0 {\mathbf f}$ (understood in the weak sense):  
$\omega_0 \in L_{2,\,\text{loc}}({\mathbb R}^3)$, $\nabla \omega_0 \in L_2({\mathbb R}^3;{\mathbb C}^3)$, and  
$$
\intop_{{\mathbb R}^3} \langle \eta^0 ({\mathbf f}({\mathbf x}) - \nabla \omega_0({\mathbf x})), \nabla \chi({\mathbf x})\rangle \, d{\mathbf x} =0,  
\quad \chi \in L_{2,\,\text{loc}}({\mathbb R}^3),\ \nabla \chi \in L_2({\mathbb R}^3;{\mathbb C}^3).
$$

\begin{remark}
Since $P_\varepsilon {\mathbf f} = {\mathbf f} - \nabla \omega_\varepsilon$ and $P_0 {\mathbf f} = {\mathbf f} - \nabla \omega_0$, then  
$$
\operatorname{curl} P_\varepsilon {\mathbf f} = \operatorname{curl} P_0 {\mathbf f} = \operatorname{curl} {\mathbf f},
$$
which is understood in the sense of distributions.
\end{remark}

\subsection{Reduction to the second order equation}

From \eqref{63} we obtain the second order equation for ${\mathbf v}_\varepsilon$:
$$
\partial^2_\tau {\mathbf v}_\varepsilon ({\mathbf x},\tau) = - \mu_0^{-1} \operatorname{curl} \partial_\tau {\mathbf u}_\varepsilon ({\mathbf x},\tau)=
- \mu_0^{-1}  \operatorname{curl} (\eta^\varepsilon ({\mathbf x}))^{-1}\operatorname{curl} {\mathbf v}_\varepsilon ({\mathbf x},\tau),
$$
with the initial conditions 
$$
{\mathbf v}_\varepsilon ({\mathbf x},0) = {\boldsymbol \phi}({\mathbf x}), \quad 
\partial_\tau {\mathbf v}_\varepsilon ({\mathbf x},0) = - \mu_0^{-1} \operatorname{curl} {\mathbf u}_\varepsilon ({\mathbf x},0) =
- \mu_0^{-1} \operatorname{curl} (P_\varepsilon {\mathbf f})({\mathbf x}) = - \mu_0^{-1} \operatorname{curl} {\mathbf f}({\mathbf x}).
$$

Thus, the magnetic intensity   ${\mathbf v}_\varepsilon ({\mathbf x},\tau)$ is the generalized solution of the following Cauchy problem 
\begin{equation}
\label{51}
\left\{
\begin{aligned}
&\mu_0 \,\partial_\tau^2 {\mathbf v}_\varepsilon ({\mathbf x},\tau) = -  \operatorname{curl} (\eta^\varepsilon(\mathbf{x}))^{-1} \operatorname{curl}  {\mathbf v}_\varepsilon ({\mathbf x},\tau),
\quad \operatorname{div} \, \mu_0 {\mathbf v}_\varepsilon ({\mathbf x},\tau) =0, \quad {\mathbf x} \in {\mathbb R}^3,\ \tau \in {\mathbb R};
\\
& {\mathbf v}_\varepsilon ({\mathbf x},0) = {\boldsymbol \phi}({\mathbf x}),\quad  \mu_0 \,\partial_\tau {\mathbf v}_\varepsilon ({\mathbf x},0) = {\boldsymbol \psi}({\mathbf x}),
\quad {\mathbf x} \in {\mathbb R}^3,
\end{aligned}
\right.
\end{equation}
where $\boldsymbol{\psi}:= - \operatorname{curl} {\mathbf f}$. 
Other fields are expressed in terms of  ${\mathbf v}_\varepsilon$ as follows: 
\begin{equation}
\label{51a}
\begin{aligned}
{\mathbf z}_\varepsilon ( {\mathbf x},\tau) &= \mu_0 {\mathbf v}_\varepsilon ({\mathbf x},\tau),
\\
 {\mathbf u}_\varepsilon ({\mathbf x},\tau) - {\mathbf u}_\varepsilon({\mathbf x},0) &= \int_0^\tau (\eta^\varepsilon ({\mathbf x}))^{-1} \operatorname{curl} {\mathbf v}_\varepsilon ({\mathbf x},\widetilde{\tau}) \, d\widetilde{\tau},
\\
 {\mathbf w}_\varepsilon ({\mathbf x},\tau) - {\mathbf w}_\varepsilon({\mathbf x},0) &= \int_0^\tau \operatorname{curl} {\mathbf v}_\varepsilon ({\mathbf x},\widetilde{\tau}) \, d\widetilde{\tau}.
\end{aligned}
\end{equation}

We substitute $\mu_0^{1/2} {\mathbf v}_\varepsilon  =  \boldsymbol{\varphi}_\varepsilon$.
Then $ \boldsymbol{\varphi}_\varepsilon$ is the solution of the problem  
\begin{equation*}
\left\{
\begin{aligned}
&\partial_\tau^2  \boldsymbol{\varphi}_\varepsilon ({\mathbf x},\tau) = - \mu_0^{-1/2} \!\operatorname{curl} (\eta^\varepsilon(\mathbf{x}))^{-1}\! \operatorname{curl} \mu_0^{-1/2}  \boldsymbol{\varphi}_\varepsilon({\mathbf x},\tau),
\ \;
 \operatorname{div}  \mu_0^{1/2}  \!\boldsymbol{\varphi}_\varepsilon({\mathbf x},\tau) =0, \ \; {\mathbf x} \in {\mathbb R}^3,\ \tau \in {\mathbb R};
\\
& \boldsymbol{\varphi}_\varepsilon({\mathbf x},0) = \mu_0^{1/2}{\boldsymbol \phi}({\mathbf x}),\quad  
 \partial_\tau  \boldsymbol{\varphi}_\varepsilon ({\mathbf x},0) = \mu_0^{-1/2}{\boldsymbol \psi}({\mathbf x}), \quad {\mathbf x} \in {\mathbb R}^3.
\end{aligned}
\right.
\end{equation*}
 The solution is represented as 
 $$
 \boldsymbol{\varphi}_\varepsilon = \cos (\tau {\mathcal L}_{J,\varepsilon}^{1/2}) \mu_0^{1/2}{\boldsymbol \phi}
 + {\mathcal L}_{J,\varepsilon}^{-1/2} \sin (\tau {\mathcal L}_{J,\varepsilon}^{1/2}) \mu_0^{-1/2}{\boldsymbol \psi}.
 $$
Hence,
\begin{equation}
\label{51b}
 {\mathbf v}_\varepsilon (\cdot,\tau) = \mu_0^{-1/2} \cos (\tau {\mathcal L}_{J,\varepsilon}^{1/2}) \mu_0^{1/2}{\boldsymbol \phi}
 + \mu_0^{-1/2}{\mathcal L}_{J,\varepsilon}^{-1/2} \sin (\tau {\mathcal L}_{J,\varepsilon}^{1/2}) \mu_0^{-1/2}{\boldsymbol \psi}.
 \end{equation}

Similarly,  the homogenized Maxwell system  \eqref{64} is reduced to the following problem for 
${\mathbf v}_0$: 
\begin{equation*}
\left\{
\begin{aligned}
& \mu_0\, \partial_\tau^2 {\mathbf v}_0({\mathbf x},\tau) = -  \operatorname{curl} (\eta^0)^{-1} \operatorname{curl}  {\mathbf v}_0({\mathbf x},\tau),
\quad
 \operatorname{div} \, \mu_0 {\mathbf v}_0({\mathbf x},\tau) =0, \quad {\mathbf x} \in {\mathbb R}^3,\ \tau \in {\mathbb R};
\\
& {\mathbf v}_0({\mathbf x},0) = {\boldsymbol \phi}({\mathbf x}),\quad \mu_0 \, \partial_\tau {\mathbf v}_0({\mathbf x},0) = {\boldsymbol \psi}({\mathbf x}),
\quad {\mathbf x} \in {\mathbb R}^3.
\end{aligned}
\right.
\end{equation*}
Other homogenized fields are expressed in terms of ${\mathbf v}_0$ as follows:
\begin{equation}
\label{53a}
\begin{aligned}
& {\mathbf z}_0({\mathbf x},\tau) = \mu_0 {\mathbf v}_0({\mathbf x},\tau),
\\
& {\mathbf u}_0({\mathbf x},\tau) - {\mathbf u}_0({\mathbf x},0) = \int_0^\tau (\eta^0)^{-1} \operatorname{curl} {\mathbf v}_0({\mathbf x},\widetilde{\tau}) \, d\widetilde{\tau},
\\
& {\mathbf w}_0({\mathbf x},\tau) - {\mathbf w}_0({\mathbf x},0) = \int_0^\tau \operatorname{curl} {\mathbf v}_0({\mathbf x},\widetilde{\tau}) \, d\widetilde{\tau}.
\end{aligned}
\end{equation}

Similarly to \eqref{51b}, we have 
\begin{equation}
\label{54}
{\mathbf v}_0(\cdot, \tau)= \mu_0^{-1/2} \cos(\tau (\mathcal{L}_{J}^0)^{1/2}) \mu_0^{1/2} {\boldsymbol \phi} + \mu_0^{-1/2}
(\mathcal{L}_{J}^0)^{-1/2} \sin( \tau (\mathcal{L}_{J}^0)^{1/2}) \mu_0^{-1/2} {\boldsymbol \psi}.
\end{equation}

\subsection{The results on homogenization of the Maxwell system}
From Theorem~\ref{cos_thrm1_J}($1^\circ$) we deduce the following result.

\begin{theorem}
	\label{thrm_51}
Under the assumptions of Subsections \emph{\ref{sec4.1}, \ref{sec4.2}}, the magnetic intensity 
${\mathbf v}_\varepsilon({\mathbf x},\tau)$ and the magnetic displacement vector ${\mathbf z}_\varepsilon({\mathbf x},\tau)$ satisfy the following statements. 

\noindent $1^\circ$. Let ${\boldsymbol \phi}, {\mathbf f} \in H^2({\mathbb R}^3;{\mathbb C}^3)$, and 
$\operatorname{div} \mu_0 {\boldsymbol \phi} =0$. 
Then for $\tau \in {\mathbb R}$ and $\varepsilon>0$ we have 
\begin{align}
\label{55}
\| {\mathbf v}_\varepsilon(\cdot, \tau) - {\mathbf v}_0(\cdot, \tau) \|_{L_2({\mathbb R}^3)} \le \mathfrak{C}_1(1+|\tau|)\varepsilon
\left( \| {\boldsymbol \phi} \|_{H^2({\mathbb R}^3)} +
 \| {\mathbf f}  \|_{H^2({\mathbb R}^3)}\right),
 \\
\label{55aaa}
\| {\mathbf z}_\varepsilon(\cdot, \tau) - {\mathbf z}_0(\cdot, \tau) \|_{L_2({\mathbb R}^3)} \le \mathfrak{C}_2 (1+|\tau|)\varepsilon
\left( \| {\boldsymbol \phi} \|_{H^2({\mathbb R}^3)} + \| {\mathbf f}  \|_{H^2({\mathbb R}^3)}\right). 
\end{align}
The constants $\mathfrak{C}_1$ and $\mathfrak{C}_2$ depend on  $|\mu_0|$, $|\mu_0^{-1}|$, $\|\eta\|_{L_\infty}$, $\|\eta^{-1}\|_{L_\infty}$, and the parameters of the lattice $\Gamma$.

\noindent $2^\circ$. Let ${\boldsymbol \phi}, {\mathbf f} \in H^s({\mathbb R}^3;{\mathbb C}^3)$, where $0\le s \le 2$, and $\operatorname{div} \mu_0 {\boldsymbol \phi} =0$. 
 Then for  $\tau \in {\mathbb R}$ and $\varepsilon>0$ we have 
\begin{align}
\label{55a}
\| {\mathbf v}_\varepsilon(\cdot, \tau) - {\mathbf v}_0(\cdot, \tau) \|_{L_2({\mathbb R}^3)} \le  \mathfrak{C}_3(s) (1+|\tau|)^{s/2}\varepsilon^{s/2}
\left( \| {\boldsymbol \phi} \|_{H^s({\mathbb R}^3)} + \| {\mathbf f}  \|_{H^s({\mathbb R}^3)}\right),
\\
\nonumber
\| {\mathbf z}_\varepsilon(\cdot, \tau) - {\mathbf z}_0(\cdot, \tau) \|_{L_2({\mathbb R}^3)} \le  \mathfrak{C}_4(s) (1+|\tau|)^{s/2}\varepsilon^{s/2}
\left( \| {\boldsymbol \phi} \|_{H^s({\mathbb R}^3)} + \| {\mathbf f}  \|_{H^s({\mathbb R}^3)}\right).
\end{align}
The constants $\mathfrak{C}_3(s)$ and $\mathfrak{C}_4(s)$ depend on  
$|\mu_0|$, $|\mu_0^{-1}|$, $\|\eta\|_{L_\infty}$, $\|\eta^{-1}\|_{L_\infty}$,
the parameters of the lattice $\Gamma$, and on  $s$.

\noindent $3^\circ$. If ${\boldsymbol \phi}, {\mathbf f} \in L_2({\mathbb R}^3;{\mathbb C}^3)$, and 
$\operatorname{div} \mu_0 {\boldsymbol \phi} =0$, then
\begin{align}
\label{57}
\lim_{\varepsilon \to 0} \| {\mathbf v}_\varepsilon(\cdot, \tau) - {\mathbf v}_0(\cdot, \tau) \|_{L_2({\mathbb R}^3)} =0,\quad \tau \in {\mathbb R},
\\
\label{57b}
\lim_{\varepsilon \to 0} \| {\mathbf z}_\varepsilon(\cdot, \tau) - {\mathbf z}_0(\cdot, \tau) \|_{L_2({\mathbb R}^3)} =0,\quad \tau \in {\mathbb R}.
\end{align}
\end{theorem}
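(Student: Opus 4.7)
The strategy is to reduce the three claims to a direct application of the second-order operator estimates of Section \ref{L_eps_approx_section} through the representation formulae \eqref{51b} and \eqref{54}. Subtracting these produces the clean splitting
\begin{equation*}
\mathbf{v}_\varepsilon(\cdot,\tau) - \mathbf{v}_0(\cdot,\tau)
= \mu_0^{-1/2}\bigl[\cos(\tau\mathcal{L}_{J,\varepsilon}^{1/2}) - \cos(\tau(\mathcal{L}_J^0)^{1/2})\bigr]\mu_0^{1/2}\boldsymbol{\phi}
+ \mu_0^{-1/2}\bigl[\mathcal{L}_{J,\varepsilon}^{-1/2}\sin(\tau\mathcal{L}_{J,\varepsilon}^{1/2}) - (\mathcal{L}_J^0)^{-1/2}\sin(\tau(\mathcal{L}_J^0)^{1/2})\bigr]\mu_0^{-1/2}\boldsymbol{\psi},
\end{equation*}
with $\boldsymbol{\psi}=-\operatorname{curl}\mathbf{f}$. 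I would first verify that both input vectors lie in the divergence-free subspace: $\mu_0^{1/2}\boldsymbol{\phi}\in J(\mu_0)$ by the hypothesis $\operatorname{div}\mu_0\boldsymbol{\phi}=0$, while $\mu_0^{-1/2}\boldsymbol{\psi}\in J(\mu_0)$ automatically since $\operatorname{div}\boldsymbol{\psi}=-\operatorname{div}\operatorname{curl}\mathbf{f}=0$. Higher Sobolev regularity of the inputs then places them, by Remark \ref{PJ}, in the corresponding spaces $J^r(\mu_0)$, so Theorem \ref{cos_thrm1_J} becomes applicable.

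For Part $1^\circ$ I would directly invoke estimates \eqref{cos_thrm1_H^2_L2_est} and \eqref{sin_thrm1_H^1_L2_est} of Theorem \ref{cos_thrm1_J}$(1^\circ)$ on the two summands, using $\|\mu_0^{1/2}\boldsymbol{\phi}\|_{H^2}\le|\mu_0|^{1/2}\|\boldsymbol{\phi}\|_{H^2}$ and $\|\mu_0^{-1/2}\boldsymbol{\psi}\|_{H^1}\le|\mu_0^{-1}|^{1/2}\|\operatorname{curl}\mathbf{f}\|_{H^1}\le C\|\mathbf{f}\|_{H^2}$; together these yield \eqref{55}. The displacement bound \eqref{55aaa} for $\mathbf{z}_\varepsilon-\mathbf{z}_0=\mu_0(\mathbf{v}_\varepsilon-\mathbf{v}_0)$ then follows by multiplying through by $|\mu_0|$.

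For Part $2^\circ$ the cosine summand is handled by the interpolated estimate \eqref{cos_thrm1_H^s_L2_est}. The only technical point is the sine summand: for $0\le s<1$ the vector $\boldsymbol{\psi}$ need not even lie in $L_2$, so I would absorb one derivative into the operator. Since $\mu_0^{-1/2}\boldsymbol{\psi}\in J(\mu_0)$, this summand coincides with the corresponding expression built from the full operators $\mathcal{L}_\varepsilon,\mathcal{L}^0$; writing $\operatorname{curl}\mathbf{f}=\sum_{j=1}^{3}B_j D_j\mathbf{f}$ with constant $(3\times 3)$-matrices $B_j$ and commuting the constant factor $\mu_0^{-1/2}B_j$ through $D_j$, the expression becomes a finite sum of terms of the form $\mu_0^{-1/2}\bigl[\mathcal{L}_\varepsilon^{-1/2}\sin(\tau\mathcal{L}_\varepsilon^{1/2})D_j - (\mathcal{L}^0)^{-1/2}\sin(\tau(\mathcal{L}^0)^{1/2})D_j\bigr]\mu_0^{-1/2}B_j\mathbf{f}$, and the $D_j$-form estimate of Theorem \ref{cos_thrm2} then delivers the desired $(1+|\tau|)^{s/2}\varepsilon^{s/2}$ bound.

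Finally, Part $3^\circ$ is a density argument. I would approximate $\boldsymbol{\phi},\mathbf{f}\in L_2$ by smooth data $\boldsymbol{\phi}_n,\mathbf{f}_n\in C_c^\infty$ (preserving $\operatorname{div}\mu_0\boldsymbol{\phi}_n=0$ via a Leray-type projection after mollification); Part $1^\circ$ yields $\mathbf{v}_\varepsilon^{(n)}(\cdot,\tau)\to\mathbf{v}_0^{(n)}(\cdot,\tau)$ in $L_2$ as $\varepsilon\to 0$ for each fixed $n$, and a triangle inequality reduces matters to a uniform-in-$\varepsilon$ $L_2\times L_2\to L_2$ bound for the solution map. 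The cosine piece contributes a trivial contraction bound; for the sine piece, the form inequality $\mathfrak{l}_\varepsilon[\mathbf{u},\mathbf{u}]\ge c_0\|\mathbf{Du}\|^2$ from \eqref{estimates} implies by duality that $\mathcal{L}_\varepsilon^{-1/2}$ composed with any first-order constant-coefficient DO (in particular with $\mu_0^{-1/2}\operatorname{curl}$) is bounded on $L_2$ with norm independent of $\varepsilon$, and combining this with $\|\sin(\tau\mathcal{L}_\varepsilon^{1/2})\|_{L_2\to L_2}\le 1$ produces the required uniform bound. This uniform $L_2$-boundedness is the one genuine technical hurdle; the remainder of the argument is a routine translation through \eqref{51b}--\eqref{54}, with the estimates for $\mathbf{z}_\varepsilon-\mathbf{z}_0$ following by multiplication by $\mu_0$.
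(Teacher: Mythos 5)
Your proof is correct and follows the same route as the paper: both reduce to the representations \eqref{51b}, \eqref{54} and then invoke the operator estimates for $\mathcal{L}_{J,\varepsilon}$. The small differences are worth noting. For part $2^\circ$ the paper simply cites \eqref{sin_thrm1_H^s_L2_est}, the $J^s\to J$ version of the $D_j$-estimate; strictly, the constant matrices $\mu_0^{-1/2}B_j\mathbf{f}$ obtained after pulling $D_j$ out of $\operatorname{curl}$ are not divergence-free, so one must pass to the full-space estimate of Theorem \ref{cos_thrm2} via the invariance of $J(\mu_0)$ under $\mathcal{L}_\varepsilon$ (or via Lemma \ref{lemma}$(3^\circ)$ plus Corollary \ref{corollary}). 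You make this step explicit, which is cleaner than the paper's citation. For part $3^\circ$ the paper obtains the uniform bound by specializing \eqref{55a} to $s=0$ and then applies Banach--Steinhaus; you instead derive the uniform $L_2\to L_2$ bound directly from \eqref{estimates} by a duality argument, which is what underlies the $s=0$ endpoint of the interpolation anyway. Both are valid; the paper's version is shorter since it reuses part $2^\circ$, while yours is more self-contained.
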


\begin{proof}
Inequality \eqref{55} follows directly from    \eqref{cos_thrm1_H^2_L2_est},
\eqref{sin_thrm1_H^1_L2_est} and representations \eqref{51b}, \eqref{54}. Similarly, estimate \eqref{55a} is deduced from  \eqref{cos_thrm1_H^s_L2_est} and \eqref{sin_thrm1_H^s_L2_est}.
The results for ${\mathbf z}_\varepsilon$ directly follow from the results for ${\mathbf v}_\varepsilon$, since
 ${\mathbf z}_\varepsilon = \mu_0 {\mathbf v}_\varepsilon$ and ${\mathbf z}_0 = \mu_0 {\mathbf v}_0$.

 Estimate \eqref{55a} with $s=0$ shows that the norm on the left is uniformly bounded provided that  
${\boldsymbol \phi}, {\mathbf f} \in L_2({\mathbb R}^3;{\mathbb C}^3)$ (and $\operatorname{div} \mu_0 {\boldsymbol \phi} =0$).
Applying  \eqref{55a} with $s=0$ and \eqref{55} and using that   $H^2$ is dense in  $L_2$, and the set $\{ {\mathbf u} \in H^2: \operatorname{div} \mu_0 {\mathbf u} =0\}$ is dense in the space  $\{ {\mathbf u} \in L_2: 
\operatorname{div} \mu_0 {\mathbf u} =0\}$,
by the Banach--Steinhaus theorem, we obtain \eqref{57}.
Relation \eqref{57b}  follows from  \eqref{57}.
\end{proof}

Theorems \ref{s<2_thrm_J}($1^\circ,2^\circ$) and \ref{th_time_sharp2}($1^\circ, 2^\circ$) show that, in the general case, estimates \eqref{55} and \eqref{55aaa} are sharp regarding the norm type and regarding the dependence on  
$\tau$.  

However, under some additional assumptions, statements $1^\circ, 2^\circ$ of Theorem \ref{thrm_51} can be improved. This follows from Theorem~\ref{cos_thrm1_J}($2^\circ$).

\begin{theorem}
	\label{thrm_52}
Suppose that the assumptions of Theorem \emph{\ref{thrm_51}} are satisfied. 
Suppose that Condition \emph{\ref{cond1}} or Condition \emph{\ref{cond2}} is satisfied.

\noindent $1^\circ$. 
Let ${\boldsymbol \phi}, {\mathbf f} \in H^{3/2}({\mathbb R}^3;{\mathbb C}^3)$, and $\operatorname{div} \mu_0 {\boldsymbol \phi} =0$. 
Then for  $\tau \in {\mathbb R}$ and $\varepsilon>0$ we have 
\begin{align*}
\| {\mathbf v}_\varepsilon(\cdot, \tau) - {\mathbf v}_0(\cdot, \tau) \|_{L_2({\mathbb R}^3)} \le \mathfrak{C}_5 (1+|\tau|)^{1/2}\varepsilon
\left( \| {\boldsymbol \phi} \|_{H^{3/2}({\mathbb R}^3)} +
 \| {\mathbf f}  \|_{H^{3/2}({\mathbb R}^3)}\right),
 \\
\| {\mathbf z}_\varepsilon(\cdot, \tau) - {\mathbf z}_0(\cdot, \tau) \|_{L_2({\mathbb R}^3)} \le \mathfrak{C}_6 (1+|\tau|)^{1/2}\varepsilon
\left( \| {\boldsymbol \phi} \|_{H^{3/2}({\mathbb R}^3)} + \| {\mathbf f}  \|_{H^{3/2}({\mathbb R}^3)}\right). 
\end{align*}
Under Condition \emph{\ref{cond1}} the constants $\mathfrak{C}_5$ and $\mathfrak{C}_6$ depend on $|\mu_0|$, $|\mu_0^{-1}|$, $\|\eta\|_{L_\infty}$, $\|\eta^{-1}\|_{L_\infty}$, and the parameters of the lattice $\Gamma$.
Under Condition \emph{\ref{cond2}} these constants depend also on  $c^\circ$.

\noindent $2^\circ$. Let ${\boldsymbol \phi}, {\mathbf f} \in H^s({\mathbb R}^3;{\mathbb C}^3)$, where $0\le s \le 3/2$, and $\operatorname{div} \mu_0 {\boldsymbol \phi} =0$. 
 Then for $\tau \in {\mathbb R}$ and $\varepsilon>0$ we have
\begin{align*}
\| {\mathbf v}_\varepsilon(\cdot, \tau) - {\mathbf v}_0(\cdot, \tau) \|_{L_2({\mathbb R}^3)} \le  \mathfrak{C}_7(s) (1+|\tau|)^{s/3}\varepsilon^{2s/3}
\left( \| {\boldsymbol \phi} \|_{H^s({\mathbb R}^3)} + \| {\mathbf f}  \|_{H^s({\mathbb R}^3)}\right),
\\
\| {\mathbf z}_\varepsilon(\cdot, \tau) - {\mathbf z}_0(\cdot, \tau) \|_{L_2({\mathbb R}^3)} \le  \mathfrak{C}_8(s) (1+|\tau|)^{s/3}\varepsilon^{2s/3}
\left( \| {\boldsymbol \phi} \|_{H^s({\mathbb R}^3)} + \| {\mathbf f}  \|_{H^s({\mathbb R}^3)}\right).
\end{align*}
Under Condition \emph{\ref{cond1}} the constants $\mathfrak{C}_7(s)$ and $\mathfrak{C}_8(s)$ depend on
 $|\mu_0|$, $|\mu_0^{-1}|$, $\|\eta\|_{L_\infty}$, $\|\eta^{-1}\|_{L_\infty}$,
the parameters of the lattice $\Gamma$, and $s$.
Under Condition \emph{\ref{cond2}} these constants depend also on $c^\circ$.
\end{theorem}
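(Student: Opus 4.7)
The plan is to transcribe the proof of Theorem~\ref{thrm_51}, substituting the improved $J$-part estimates of Theorem~\ref{cos_thrm1_J}$(2^\circ)$ for those of Theorem~\ref{cos_thrm1_J}$(1^\circ)$. Starting from \eqref{51b} and \eqref{54}, we split the difference as
\[
\mathbf{v}_\varepsilon(\cdot,\tau) - \mathbf{v}_0(\cdot,\tau)
= \mu_0^{-1/2}\,\Delta_{\cos}(\tau)\,\mu_0^{1/2}\boldsymbol{\phi}
+ \mu_0^{-1/2}\,\Delta_{\sin}(\tau)\,\mu_0^{-1/2}\boldsymbol{\psi},
\]
where $\Delta_{\cos}(\tau)$ and $\Delta_{\sin}(\tau)$ denote the differences of the cosine and sine families of $\mathcal{L}_{J,\varepsilon}$ and $\mathcal{L}_J^0$, and $\boldsymbol{\psi}=-\operatorname{curl}\mathbf{f}$. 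The assumptions place $\mu_0^{1/2}\boldsymbol{\phi}\in J(\mu_0)$ (since $\operatorname{div}\mu_0\boldsymbol{\phi}=0$), and automatically $\mu_0^{-1/2}\boldsymbol{\psi}\in J(\mu_0)$ (since $\operatorname{div}\boldsymbol{\psi}=\operatorname{div}\operatorname{curl}\mathbf{f}=0$). The bound on $\mathbf{z}_\varepsilon-\mathbf{z}_0$ is immediate from $\mathbf{z}=\mu_0\mathbf{v}$.

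For statement~$1^\circ$, we apply \eqref{cos_thrm1_H^3/2_L2_est} to the cosine term and \eqref{sin_thrm1_H^1/2_L2_est} to the sine term, producing bounds of the form $C(1+|\tau|)^{1/2}\varepsilon\,\|\mu_0^{1/2}\boldsymbol{\phi}\|_{H^{3/2}}$ and $C(1+|\tau|)^{1/2}\varepsilon\,\|\mu_0^{-1/2}\boldsymbol{\psi}\|_{H^{1/2}}$ respectively; the second is dominated by $C'(1+|\tau|)^{1/2}\varepsilon\,\|\mathbf{f}\|_{H^{3/2}}$ because $\operatorname{curl}$ is a first-order DO. For statement~$2^\circ$ the cosine contribution is handled identically via \eqref{cos111}, yielding the desired $C(1+|\tau|)^{s/3}\varepsilon^{2s/3}\|\boldsymbol{\phi}\|_{H^s}$.

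The main obstacle is the sine contribution in statement~$2^\circ$: when $s<1$ the quantity $\mu_0^{-1/2}\boldsymbol{\psi}$ lies only in $H^{s-1}$, so a direct application of a $(J^{s-1}\to J)$-type estimate is impossible. The remedy is to absorb the lost derivative into the $D_j$ already built into the sine-family estimates of Theorems~\ref{cos_thrm4} and~\ref{cos_thrm6}. Writing $\operatorname{curl}=\sum_{j=1}^3 R_j D_j$ with constant matrices $R_j$ and setting $A_j:=-\mu_0^{-1/2}R_j$, we use that $\mu_0^{-1/2}\boldsymbol{\psi}\in J(\mu_0)$ while $\mathcal{L}_\varepsilon$ and $\mathcal{L}^0$ are reduced by the Weyl decomposition to rewrite the sine contribution as
\[
\mu_0^{-1/2}\sum_{j=1}^3\bigl[\mathcal{L}_\varepsilon^{-1/2}\sin(\tau\mathcal{L}_\varepsilon^{1/2})D_j-(\mathcal{L}^0)^{-1/2}\sin(\tau(\mathcal{L}^0)^{1/2})D_j\bigr](A_j\mathbf{f}).
\]
Under Condition~\ref{cond1} the full-space estimate \eqref{sin_est4} of Theorem~\ref{cos_thrm4} bounds each summand in the $(H^s\to L_2)$-norm by $C(1+|\tau|)^{s/3}\varepsilon^{2s/3}$; under Condition~\ref{cond2} the analogous bound from Theorem~\ref{cos_thrm6} applies. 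Once this $\operatorname{curl}$-decomposition step is recognized---that the derivative ``missing'' in $\boldsymbol{\psi}=-\operatorname{curl}\mathbf{f}$ is precisely the derivative already packaged in the $D_j$-form of the sine estimates---the remainder is bookkeeping.
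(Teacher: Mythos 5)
Your argument is essentially the paper's own (the paper's proof is a one-liner: the result is said to follow from Theorem~\ref{cos_thrm1_J}($2^\circ$), with Theorem~\ref{thrm_51}'s proof serving as the template): pass through the representations \eqref{51b}, \eqref{54}, treat the cosine contribution via \eqref{cos_thrm1_H^3/2_L2_est} and \eqref{cos111}, and for the sine contribution absorb the derivative lost in $\boldsymbol{\psi}=-\operatorname{curl}\mathbf{f}$ into the $D_j$ already built into the sine-family estimates. You make explicit and correct the useful observation that $\mu_0^{-1/2}\boldsymbol{\psi}\in J(\mu_0)$, so the $J$-restricted sine family can be replaced by the full-space one before performing the $\operatorname{curl}$-decomposition, which sidesteps the fact that the individual summands $A_j\mathbf{f}$ need not lie in $J^s(\mu_0)$ (a point the paper leaves implicit). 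The one blemish: the constants in the full-space Theorems~\ref{cos_thrm4} and~\ref{cos_thrm6} depend on the problem data \eqref{problem_data}, including $\|\nu\|_{L_\infty}$ and $\|\nu^{-1}\|_{L_\infty}$, whereas the statement asserts that $\mathfrak{C}_7(s)$, $\mathfrak{C}_8(s)$ are free of $\nu$ (as they must be, since $\nu$ does not enter the Maxwell system). To recover the claimed dependency, apply Theorems~\ref{cos_thrm4}, \ref{cos_thrm6} to the auxiliary operator $\widehat{\mathcal L}_\varepsilon$ with the fixed coefficient $\widehat{\nu}=2|\mu_0^{-1}|^{2}\|\eta^{-1}\|_{L_\infty}$, exactly as the paper does when it derives Theorem~\ref{cos_thrm1_J}; the divergence-free parts coincide, so the resulting bound on the sine term is unchanged but the constant becomes $\nu$-free.
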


\begin{remark}
In the case where ${\boldsymbol \phi} \ne 0$, we are not able to derive 
 approximation for the fields ${\mathbf u}_\varepsilon$ and ${\mathbf w}_\varepsilon$ from the known results for the operator ${\mathcal L}_{J,\varepsilon}$, because ${\mathbf u}_\varepsilon$ and ${\mathbf w}_\varepsilon$ are expressed in terms of the derivatives of  ${\mathbf v}_\varepsilon$ \emph{(}see \eqref{51a}\emph{)}, but we do not have approximation for the operator  
$\cos (\tau {\mathcal L}_{J,\varepsilon}^{1/2})$ in the energy norm.
\end{remark}

In the case where ${\boldsymbol \phi}=0$, we obtain approximation for all four fields applying 
Theorem \ref{cos_thrm2_J}($1^\circ$).

\begin{theorem}
	\label{thrm_53}
Under the assumptions of Subsections \emph{\ref{sec4.1} and \ref{sec4.2}}, suppose in addition that  
${\boldsymbol \phi}=0$.

\noindent $1^\circ$.
If ${\mathbf f}  \in H^3({\mathbb R}^3;{\mathbb C}^3)$, then for $\tau \in {\mathbb R}$ and $0< \varepsilon\le 1$ we have the following approximations for the fields  ${\mathbf v}_\varepsilon$ and ${\mathbf z}_\varepsilon$ in the energy norm\emph{:}
\begin{align}
\label{4.20}
\bigl\| {\mathbf v}_\varepsilon(\cdot, \tau) - {\mathbf v}_0(\cdot, \tau) - \varepsilon \mu_0^{-1} \Psi^\varepsilon \operatorname{curl} {\mathbf v}_0(\cdot, \tau) \bigr\|_{H^1({\mathbb R}^3)}
\le {\mathfrak C}_9 (1+ |\tau|) \varepsilon \| {\mathbf f} \|_{H^3({\mathbb R}^3)},
\\
\label{4.21}
\bigl\| {\mathbf z}_\varepsilon(\cdot, \tau) - {\mathbf z}_0(\cdot, \tau) - \varepsilon \Psi^\varepsilon  
\operatorname{curl} {\mathbf v}_0(\cdot, \tau) \bigr\|_{H^1({\mathbb R}^3)}
\le {\mathfrak C}_{10} (1+ |\tau|) \varepsilon \| {\mathbf f} \|_{H^3({\mathbb R}^3)}.
\\
\label{4.22}
\bigl\| (\eta^\varepsilon)^{-1} \operatorname{curl} {\mathbf v}_\varepsilon(\cdot, \tau) - ((\eta^0)^{-1} + \Sigma^\varepsilon) \operatorname{curl} {\mathbf v}_0(\cdot, \tau) \bigr\|_{L_2({\mathbb R}^3)}
\le {\mathfrak C}_{11} (1+ |\tau|) \varepsilon \| {\mathbf f} \|_{H^3({\mathbb R}^3)}.
\end{align}
If ${\mathbf f}  \in H^3({\mathbb R}^3;{\mathbb C}^3)$, then for $\tau \in {\mathbb R}$ and $0< \varepsilon \le 1$ we have the following approximations for the fields ${\mathbf u}_\varepsilon$ and ${\mathbf w}_\varepsilon$ in $L_2$\emph{:}
\begin{align}
\label{4.23}
\bigl\| \bigl({\mathbf u}_\varepsilon(\cdot, \tau) - {\mathbf u}_\varepsilon(\cdot,0)\bigr) - 
\bigl({\mathbf 1}+ \Sigma_\circ^\varepsilon \bigr) \bigl( {\mathbf u}_0(\cdot, \tau) - {\mathbf u}_0(\cdot, 0) \bigr) \bigr\|_{L_2({\mathbb R}^3)}
\le {\mathfrak C}_{11} |\tau|(1+ |\tau|) \varepsilon \| {\mathbf f} \|_{H^3({\mathbb R}^3)},
\\
\label{4.24}
\bigl\| \bigl( {\mathbf w}_\varepsilon(\cdot, \tau) - {\mathbf w}_\varepsilon(\cdot,0)\bigr) - 
\widetilde{\eta}^\varepsilon (\eta^0)^{-1} \bigl( {\mathbf w}_0(\cdot, \tau) - {\mathbf w}_0(\cdot, 0) \bigr) \bigr\|_{L_2({\mathbb R}^3)}
\le {\mathfrak C}_{12} |\tau|(1+ |\tau|) \varepsilon \| {\mathbf f} \|_{H^3({\mathbb R}^3)}.
\end{align}
The constants $\mathfrak{C}_9, \mathfrak{C}_{10}, \mathfrak{C}_{11}, \mathfrak{C}_{12}$ depend on $|\mu_0|$, $|\mu_0^{-1}|$, $\|\eta\|_{L_\infty}$, $\|\eta^{-1}\|_{L_\infty}$, and the parameters of the lattice $\Gamma$.

\noindent $2^\circ$.
Let ${\mathbf f} \in H^{1+s}({\mathbb R}^3;{\mathbb C}^3)$, where $0\le s \le 2$. Then for $\tau \in {\mathbb R}$ and $\varepsilon>0$ we have 
\begin{align*}
 \bigl\| {\mathbf D} \bigl({\mathbf v}_\varepsilon(\cdot, \tau) - {\mathbf v}_0(\cdot, \tau) - \varepsilon \mu_0^{-1} \Psi^\varepsilon  \Pi_\varepsilon \operatorname{curl} {\mathbf v}_0(\cdot, \tau) \bigr)
 \bigr\|_{L_2({\mathbb R}^3)}
\le {\mathfrak C}_{13}(s) (1+ |\tau|)^{s/2} \varepsilon^{s/2} \| {\mathbf f} \|_{H^{1+s}({\mathbb R}^3)},
\\
\bigl\| {\mathbf D} \bigl( {\mathbf z}_\varepsilon(\cdot, \tau) - {\mathbf z}_0(\cdot, \tau) - \varepsilon \Psi^\varepsilon \Pi_\varepsilon \operatorname{curl} {\mathbf v}_0(\cdot, \tau) \bigr)\bigr\|_{L_2({\mathbb R}^3)}
\le {\mathfrak C}_{14}(s) (1+ |\tau|)^{s/2} \varepsilon^{s/2} \| {\mathbf f} \|_{H^{1+s}({\mathbb R}^3)}.
\\
\bigl\| (\eta^\varepsilon )^{-1} \operatorname{curl} {\mathbf v}_\varepsilon (\cdot, \tau) - ( (\eta^0)^{-1} + \Sigma^\varepsilon \Pi_\varepsilon) \operatorname{curl} {\mathbf v}_0(\cdot, \tau) \bigr\|_{L_2({\mathbb R}^3)}
\le {\mathfrak C}_{15}(s) (1+ |\tau|)^{s/2} \varepsilon^{s/2} \| {\mathbf f} \|_{H^{1+s}({\mathbb R}^3)}.
\end{align*}
If ${\mathbf f} \in H^{1+s}({\mathbb R}^3;{\mathbb C}^3)$, where $0\le s \le 2$, then for $\tau \in {\mathbb R}$ and $\varepsilon>0$ we have 
\begin{align*}
\begin{split}
\bigl\| \bigl({\mathbf u}_\varepsilon (\cdot, \tau) - {\mathbf u}_\varepsilon(\cdot,0)\bigr) - 
\bigl(I+ \Sigma_\circ^\varepsilon  \Pi_\varepsilon \bigr) \bigl({\mathbf u}_0(\cdot, \tau) - {\mathbf u}_0(\cdot, 0) \bigr) \bigr\|_{L_2({\mathbb R}^3)}
\\
\le {\mathfrak C}_{15}(s) |\tau|(1+ |\tau|)^{s/2} \varepsilon^{s/2} \| {\mathbf f} \|_{H^{1+s}({\mathbb R}^3)},
\end{split}
\\
\begin{split}
\bigl\| \bigl( {\mathbf w}_\varepsilon(\cdot, \tau) - {\mathbf w}_\varepsilon(\cdot,0)\bigr) - 
\bigl( I +(\widetilde{\eta}^\varepsilon (\eta^0)^{-1} - {\mathbf 1})\Pi_\varepsilon  \bigr) \bigl( {\mathbf w}_0(\cdot, \tau) -
 {\mathbf w}_0(\cdot, 0) \bigr)
 \bigr\|_{L_2({\mathbb R}^3)}
\\
\le {\mathfrak C}_{16}(s) |\tau|(1+ |\tau|)^{s/2} \varepsilon^{s/2} \| {\mathbf f} \|_{H^{1+s}({\mathbb R}^3)}.
\end{split}
\end{align*}
The constants $\mathfrak{C}_{13}(s), \mathfrak{C}_{14}(s), \mathfrak{C}_{15}(s), \mathfrak{C}_{16}(s)$ depend on  
$|\mu_0|$, $|\mu_0^{-1}|$, $\|\eta\|_{L_\infty}$, $\|\eta^{-1}\|_{L_\infty}$, the parameters of the lattice  $\Gamma$, and $s$.

\noindent $3^\circ$.
If ${\mathbf f} \in H^{1}({\mathbb R}^3;{\mathbb C}^3)$, then for $\tau \in {\mathbb R}$ we have
$$
\begin{aligned}
&\lim_{\varepsilon \to 0} \bigl\| {\mathbf D} \bigl( {\mathbf v}_\varepsilon(\cdot, \tau) - {\mathbf v}_0(\cdot, \tau) - \varepsilon \mu_0^{-1} \Psi^\varepsilon \Pi_\varepsilon \operatorname{curl} {\mathbf v}_0(\cdot, \tau) \bigr)\bigr\|_{L_2({\mathbb R}^3)} =0,
\\
&\lim_{\varepsilon \to 0} 
\bigl\| {\mathbf D} \bigl( {\mathbf z}_\varepsilon(\cdot, \tau) - {\mathbf z}_0(\cdot, \tau) - \varepsilon  \Psi^\varepsilon  \Pi_\varepsilon \operatorname{curl} {\mathbf v}_0(\cdot, \tau) 
\bigr) \bigr\|_{L_2({\mathbb R}^3)}
=0,
\\
&\lim_{\varepsilon \to 0} 
\bigl\| (\eta^\varepsilon)^{-1} \operatorname{curl} {\mathbf v}_\varepsilon(\cdot, \tau) - ( (\eta^0)^{-1} + \Sigma^\varepsilon  \Pi_\varepsilon) \operatorname{curl} {\mathbf v}_0(\cdot, \tau)
\bigr\|_{L_2({\mathbb R}^3)}
=0,
\\
&\lim_{\varepsilon \to 0} 
\bigl\| \bigl({\mathbf u}_\varepsilon(\cdot, \tau) - {\mathbf u}_\varepsilon(\cdot,0)\bigr) - 
\bigl(I+ \Sigma_\circ^\varepsilon  \Pi_\varepsilon \bigr) \bigl({\mathbf u}_0(\cdot, \tau) - {\mathbf u}_0(\cdot, 0) \bigr) \bigr\|_{L_2({\mathbb R}^3)}
=0,
\\
&\lim_{\varepsilon \to 0} 
\bigl\| \bigl( {\mathbf w}_\varepsilon (\cdot, \tau) - {\mathbf w}_\varepsilon (\cdot,0)\bigr) - 
\bigl(I +(\widetilde{\eta}^\varepsilon  (\eta^0)^{-1} - {\mathbf 1})\Pi_\varepsilon  \bigr) \bigl( {\mathbf w}_0(\cdot, \tau) - {\mathbf w}_0(\cdot, 0) \bigr) \bigr\|_{L_2({\mathbb R}^3)} =0.
\end{aligned}
$$
\end{theorem}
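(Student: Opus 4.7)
The strategy is to reduce everything to the operator-theoretic approximations of Theorem \ref{cos_thrm2_J}$(1^\circ)$ by using the representations \eqref{51b}, \eqref{54} together with the fact that $\boldsymbol{\phi}=0$ kills the cosine contribution. With $\boldsymbol{\psi}=-\operatorname{curl}\mathbf{f}$ we have
\begin{equation*}
\mathbf{v}_\varepsilon(\cdot,\tau)=\mu_0^{-1/2}\mathcal{L}_{J,\varepsilon}^{-1/2}\sin(\tau\mathcal{L}_{J,\varepsilon}^{1/2})\mu_0^{-1/2}\boldsymbol{\psi},\quad
\mathbf{v}_0(\cdot,\tau)=\mu_0^{-1/2}(\mathcal{L}_J^0)^{-1/2}\sin(\tau(\mathcal{L}_J^0)^{1/2})\mu_0^{-1/2}\boldsymbol{\psi},
\end{equation*}
and $\|\mu_0^{-1/2}\boldsymbol{\psi}\|_{H^\sigma(\mathbb{R}^3)}\le C\|\mathbf{f}\|_{H^{\sigma+1}(\mathbb{R}^3)}$. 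Substituting these expressions into \eqref{sin_thrm1_corr1} (for the $H^2\to H^1$-bound on $\mathbf{v}_\varepsilon$) and \eqref{sin_thrm1_corr2} (for the flux) and using the identity $\operatorname{curl}\mu_0^{-1/2}\bigl(\mu_0^{1/2}\mathbf{v}_0\bigr)=\operatorname{curl}\mathbf{v}_0$, one obtains \eqref{4.20} and \eqref{4.22} directly. Estimate \eqref{4.21} follows from \eqref{4.20} by multiplying both sides by $\mu_0$ and noting $\mathbf{z}_\bullet=\mu_0\mathbf{v}_\bullet$; the $\mu_0^{-1}$ in the corrector cancels to give the corrector $\varepsilon\Psi^\varepsilon\operatorname{curl}\mathbf{v}_0$. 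For part $2^\circ$, one proceeds identically but using \eqref{sin_thrm1_corr3}, \eqref{sin_thrm1_corr4} in place of \eqref{sin_thrm1_corr1}, \eqref{sin_thrm1_corr2}, and the power on $\|\mathbf{f}\|_{H^{1+s}}$ appears because we invoke the $H^s$-input estimates applied to $\mu_0^{-1/2}\boldsymbol{\psi}$.

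For the estimates on $\mathbf{u}_\varepsilon$ and $\mathbf{w}_\varepsilon$, the key is the time-integral representations \eqref{51a} and \eqref{53a}, which give
\begin{equation*}
\bigl(\mathbf{u}_\varepsilon-\mathbf{u}_\varepsilon(\cdot,0)\bigr)-(\mathbf{1}+\Sigma_\circ^\varepsilon)\bigl(\mathbf{u}_0-\mathbf{u}_0(\cdot,0)\bigr)
=\int_0^\tau\!\Bigl[(\eta^\varepsilon)^{-1}\operatorname{curl}\mathbf{v}_\varepsilon-\bigl((\eta^0)^{-1}+\Sigma^\varepsilon\bigr)\operatorname{curl}\mathbf{v}_0\Bigr](\cdot,\tilde\tau)\,d\tilde\tau,
\end{equation*}
once one uses the algebraic identity $(\eta^0)^{-1}+\Sigma^\varepsilon=(\mathbf{1}+\Sigma_\circ^\varepsilon)(\eta^0)^{-1}$ (which follows from $\widetilde{\Phi}_j=\sum_k[(\eta^0)^{-1}]_{kj}\Phi_k$, so that $\Sigma=\Sigma_\circ(\eta^0)^{-1}$) together with $\mathbf{u}_0=(\eta^0)^{-1}\mathbf{w}_0$ and $\partial_{\tilde\tau}\mathbf{u}_0=(\eta^0)^{-1}\operatorname{curl}\mathbf{v}_0$. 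Estimate \eqref{4.23} then follows by taking the $L_2$-norm under the integral and applying the already-proved \eqref{4.22}, with an extra factor $|\tau|$ from the time integration. For \eqref{4.24}, one similarly writes
\begin{equation*}
\bigl(\mathbf{w}_\varepsilon-\mathbf{w}_\varepsilon(\cdot,0)\bigr)-\widetilde{\eta}^\varepsilon(\eta^0)^{-1}\bigl(\mathbf{w}_0-\mathbf{w}_0(\cdot,0)\bigr)
=\eta^\varepsilon\int_0^\tau\!\Bigl[(\eta^\varepsilon)^{-1}\operatorname{curl}\mathbf{v}_\varepsilon-\bigl((\eta^0)^{-1}+\Sigma^\varepsilon\bigr)\operatorname{curl}\mathbf{v}_0\Bigr]d\tilde\tau,
\end{equation*}
using the second identity $\widetilde{\eta}^\varepsilon=\eta^\varepsilon(\mathbf{1}+\Sigma_\circ^\varepsilon)$, and conclude via $\|\eta^\varepsilon\|_{L_\infty}\le\|\eta\|_{L_\infty}$ and \eqref{4.22}. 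The analogous derivation for part $2^\circ$ uses \eqref{sin_thrm1_corr4} and the corresponding smoothed corrector.

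Part $3^\circ$ will be obtained by a Banach--Steinhaus argument. The $s=0$ case of the estimates in part $2^\circ$ gives uniform boundedness (in $\varepsilon$) of the operators on the left-hand sides as mappings from $H^1(\mathbb{R}^3;\mathbb{C}^3)$ to the appropriate target space; the smoothing operator $\Pi_\varepsilon$ is bounded uniformly in $\varepsilon$ on $L_2$, and $\Lambda^\varepsilon$ contributes only multiplication by a bounded periodic function. Part $2^\circ$ with $s>0$ gives convergence on the dense class $H^{1+s}$, from which the statements of part $3^\circ$ follow on the whole space $H^1(\mathbb{R}^3;\mathbb{C}^3)$.

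The main technical obstacle is the bookkeeping of the two algebraic identities $(\eta^0)^{-1}+\Sigma^\varepsilon=(\mathbf{1}+\Sigma_\circ^\varepsilon)(\eta^0)^{-1}$ and $\widetilde{\eta}^\varepsilon=\eta^\varepsilon(\mathbf{1}+\Sigma_\circ^\varepsilon)$, which transform the ``flux-type'' corrector naturally produced by \eqref{sin_thrm1_corr2}, \eqref{sin_thrm1_corr4} into the ``field-type'' corrector $(\mathbf{1}+\Sigma_\circ^\varepsilon)$ appearing in the statement for $\mathbf{u}_\varepsilon$, and into $\widetilde{\eta}^\varepsilon(\eta^0)^{-1}$ for $\mathbf{w}_\varepsilon$. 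Everything else is a direct application of the already proven theorems together with the representations \eqref{51a}--\eqref{54}.
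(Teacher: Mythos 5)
Your proposal is correct and follows essentially the same route as the paper: with $\boldsymbol{\phi}=0$ the representations \eqref{51b}, \eqref{54} reduce to the sine term, and \eqref{4.20}--\eqref{4.22} drop out of Theorem~\ref{cos_thrm2_J}$(1^\circ)$ (estimates \eqref{sin_thrm1_corr1}--\eqref{sin_thrm1_corr2}), while \eqref{4.23}--\eqref{4.24} follow by integrating \eqref{4.22} in time, using $\Sigma=\Sigma_\circ(\eta^0)^{-1}$, and multiplying by the bounded matrix $\eta^\varepsilon$; parts $2^\circ$ and $3^\circ$ go through \eqref{sin_thrm1_corr3}--\eqref{sin_thrm1_corr4} and Banach--Steinhaus exactly as in the paper. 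The only cosmetic difference is that the paper passes from \eqref{4.23} to \eqref{4.24} via $\mathbf{w}_\bullet=\eta^\bullet\mathbf{u}_\bullet$ rather than factoring $\eta^\varepsilon$ out of the time integral directly; the two are equivalent.
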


\begin{proof}
Estimates \eqref{4.20} and \eqref{4.22} follow directly from 
\eqref{sin_thrm1_corr1}, \eqref{sin_thrm1_corr2},  representations
\eqref{51b}, \eqref{54}, and the relation $\boldsymbol{\psi} = - \operatorname{curl} {\mathbf f}$.
Inequality \eqref{4.21} follows from  \eqref{4.20} and the relations  ${\mathbf z}_\varepsilon = \mu_0 {\mathbf v}_\varepsilon$, ${\mathbf z}_0 = \mu_0 {\mathbf v}_0$.

 Next, integrating \eqref{4.22} in time and taking  \eqref{51a} and \eqref{53a} into account, we obtain \eqref{4.23}.
We have used that $\Sigma({\mathbf x}) \eta^0 = \Sigma_\circ({\mathbf x})$. Estimate \eqref{4.24} follows from  \eqref{4.23} and the relations ${\mathbf w}_\varepsilon = \eta^\varepsilon  {\mathbf u}_\varepsilon$, 
${\mathbf w}_0 = \eta^0 {\mathbf u}_0$.

Statement  $2^\circ$ is proved similarly with the help of \eqref{sin_thrm1_corr3} and \eqref{sin_thrm1_corr4}.

Statement $3^\circ$ follows from statement $2^\circ$, by the Banach--Steinhaus theorem.
\end{proof}

In \cite[Lemma 8.6]{BSu2}, it was shown that the weak 
 $(L_2 \to L_2)$-limit of the operator $[Y^\varepsilon] \Pi_{\varepsilon}$ is equal to zero if $Y({\mathbf x})$ is a  
$\Gamma$-periodic matrix-valued function with zero mean value. 
Using this property, we deduce the following corollary from statement $3^\circ$ of Theorem \ref{thrm_53}.

\begin{corollary}
If ${\boldsymbol \phi}=0$ and ${\mathbf f} \in H^{1}({\mathbb R}^3;{\mathbb C}^3)$, then for  $\tau \in {\mathbb R}$ and $\varepsilon \to 0$ we have
$$
\begin{aligned}
& {\mathbf v}_\varepsilon(\cdot, \tau) \to  {\mathbf v}_0(\cdot, \tau)\ \text{weakly in}\ H^1({\mathbb R}^3;{\mathbb C}^3);
\\
&{\mathbf z}_\varepsilon(\cdot, \tau) \to  {\mathbf z}_0(\cdot, \tau)\  \text{weakly in}\ H^1({\mathbb R}^3;{\mathbb C}^3);
\\
&(\eta^\varepsilon)^{-1} \operatorname{curl} {\mathbf v}_\varepsilon (\cdot, \tau) \to  (\eta^0)^{-1}
 \operatorname{curl} {\mathbf v}_0(\cdot, \tau)\  \text{weakly in}\ L_2({\mathbb R}^3;{\mathbb C}^3);
\\
&{\mathbf u}_\varepsilon(\cdot, \tau) - {\mathbf u}_\varepsilon(\cdot,0) \to  {\mathbf u}_0(\cdot, \tau) - {\mathbf u}_0(\cdot,0)\  \text{weakly in}\ L_2({\mathbb R}^3;{\mathbb C}^3);
\\
& {\mathbf w}_\varepsilon(\cdot, \tau) - {\mathbf w}_\varepsilon(\cdot,0) \to  {\mathbf w}_0(\cdot, \tau) - {\mathbf w}_0(\cdot,0)\ \text{weakly in}\ L_2({\mathbb R}^3;{\mathbb C}^3).
\end{aligned}
$$
\end{corollary}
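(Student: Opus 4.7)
The plan is to combine three ingredients: the strong convergences (with correctors) supplied by Theorem~\ref{thrm_53}$(3^\circ)$; the weak null-limit $[Y^\varepsilon]\Pi_\varepsilon \rightharpoonup 0$ in $(L_2 \to L_2)$ for any $\Gamma$-periodic matrix-valued $Y$ with zero mean, quoted from \cite[Lemma 8.6]{BSu2}; and a uniform $H^1$-bound on ${\mathbf v}_\varepsilon$ obtained from the functional calculus and the coercivity estimate~\eqref{estimates}.

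First I would establish the weak $H^1$-convergence of ${\mathbf v}_\varepsilon$; the claim for ${\mathbf z}_\varepsilon = \mu_0 {\mathbf v}_\varepsilon$ will then follow because $\mu_0$ is constant and invertible. From representation~\eqref{51b} with ${\boldsymbol \phi}=0$ we get
\[
\mathcal{L}_{J,\varepsilon}^{1/2}(\mu_0^{1/2} {\mathbf v}_\varepsilon(\cdot,\tau)) = \sin(\tau \mathcal{L}_{J,\varepsilon}^{1/2}) \mu_0^{-1/2}{\boldsymbol \psi},
\]
so $\|\mathcal{L}_{J,\varepsilon}^{1/2}(\mu_0^{1/2} {\mathbf v}_\varepsilon)\|_{L_2} \le C\|\boldsymbol{\psi}\|_{L_2}$, and the lower inequality in~\eqref{estimates} (with $c_0$ independent of $\varepsilon$) yields $\|\mathbf{D}{\mathbf v}_\varepsilon\|_{L_2} \le C'\|\boldsymbol{\psi}\|_{L_2}$. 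Together with $\|{\mathbf v}_\varepsilon\|_{L_2} \le C|\tau|\,\|\boldsymbol{\psi}\|_{L_2}$, this gives a uniform $H^1$-bound for ${\mathbf v}_\varepsilon(\cdot,\tau)$. Combined with the strong $L_2$-convergence ${\mathbf v}_\varepsilon \to {\mathbf v}_0$ from Theorem~\ref{thrm_51}$(3^\circ)$ (equation~\eqref{57}), this forces ${\mathbf v}_\varepsilon(\cdot,\tau) \rightharpoonup {\mathbf v}_0(\cdot,\tau)$ weakly in $H^1$.

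For the remaining three weak-$L_2$ assertions the template is identical: exploit the strong convergences in Theorem~\ref{thrm_53}$(3^\circ)$ and kill off the corrector terms via the weak null-limit from \cite[Lemma 8.6]{BSu2}. For example,
\[
(\eta^\varepsilon)^{-1}\operatorname{curl} {\mathbf v}_\varepsilon - (\eta^0)^{-1}\operatorname{curl} {\mathbf v}_0 = \bigl[(\eta^\varepsilon)^{-1}\operatorname{curl} {\mathbf v}_\varepsilon - ((\eta^0)^{-1} + \Sigma^\varepsilon \Pi_\varepsilon)\operatorname{curl} {\mathbf v}_0\bigr] + \Sigma^\varepsilon \Pi_\varepsilon \operatorname{curl} {\mathbf v}_0;
\]
the first bracket tends to zero strongly in $L_2$ by Theorem~\ref{thrm_53}$(3^\circ)$, while $\Sigma({\mathbf x})$ has columns $\nabla \widetilde{\Phi}_j({\mathbf x})$ with $\widetilde{\Phi}_j$ $\Gamma$-periodic (hence zero mean), so $[\Sigma^\varepsilon]\Pi_\varepsilon \rightharpoonup 0$ in $(L_2 \to L_2)$ and the second summand tends weakly to zero against the fixed function $\operatorname{curl}{\mathbf v}_0(\cdot,\tau) \in L_2$. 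The analogous argument for ${\mathbf u}_\varepsilon - {\mathbf u}_\varepsilon(\cdot,0)$ uses $\Sigma_\circ$ (columns $\nabla \Phi_j$, zero mean), and for ${\mathbf w}_\varepsilon - {\mathbf w}_\varepsilon(\cdot,0)$ uses the matrix $\widetilde{\eta}(\eta^0)^{-1} - {\mathbf 1}$, which has mean $\overline{\widetilde{\eta}}(\eta^0)^{-1} - {\mathbf 1} = \eta^0(\eta^0)^{-1} - {\mathbf 1} = 0$.

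There is no real analytic obstacle: the heavy lifting was done in the strong-convergence theorems above, and \cite[Lemma 8.6]{BSu2} is imported as a black box. The only genuinely new verification is the uniform $H^1$-bound for ${\mathbf v}_\varepsilon$ in the first step, which is a one-line consequence of the $\varepsilon$-uniform coercivity of the form $\mathfrak{l}[\cdot,\cdot]$.
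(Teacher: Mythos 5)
Your proof is correct, and for the last three weak $L_2$-convergences it follows exactly the paper's route: strong convergences with corrector from Theorem~\ref{thrm_53}$(3^\circ)$, plus the weak null-limit of $[Y^\varepsilon]\Pi_\varepsilon$ for zero-mean periodic $Y$ from \cite[Lemma~8.6]{BSu2}, applied to $\Sigma$, $\Sigma_\circ$, and $\widetilde\eta(\eta^0)^{-1}-\mathbf{1}$ (your verification that the last one has zero mean via $\overline{\widetilde\eta}=\eta^0$ is the right observation). For the first two claims (weak $H^1$-convergence of $\mathbf{v}_\varepsilon$ and $\mathbf{z}_\varepsilon$) you take a genuinely different and arguably cleaner path. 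The paper's one-line deduction "from Theorem~\ref{thrm_53}$(3^\circ)$" would require, for the gradient part, a Leibniz expansion $\varepsilon\mathbf{D}(\Psi^\varepsilon\Pi_\varepsilon\operatorname{curl}\mathbf{v}_0)=(\mathbf{D}\Psi)^\varepsilon\Pi_\varepsilon\operatorname{curl}\mathbf{v}_0 + [\Psi^\varepsilon]\Pi_\varepsilon(\varepsilon\mathbf{D}\Pi_\varepsilon\operatorname{curl}\mathbf{v}_0)$, then the null-limit lemma for the first summand and a strong-convergence argument for the second, plus an appeal to \eqref{57} anyway to handle the $L_2$-component of the $H^1$-norm. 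Your replacement — extracting a uniform $H^1$-bound on $\mathbf{v}_\varepsilon$ directly from $\mathcal{L}_{J,\varepsilon}^{1/2}(\mu_0^{1/2}\mathbf{v}_\varepsilon)=\sin(\tau\mathcal{L}_{J,\varepsilon}^{1/2})\mu_0^{-1/2}\boldsymbol\psi$ and the $\varepsilon$-uniform coercivity \eqref{estimates}, then upgrading the strong $L_2$-convergence \eqref{57} to weak $H^1$-convergence by boundedness — avoids the Leibniz manipulation entirely. Both are valid; yours economizes on the corrector machinery at the cost of invoking Theorem~\ref{thrm_51}$(3^\circ)$ explicitly, which the paper in any case needs implicitly.
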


Theorems \ref{s<2_thrm_J}($3^\circ$) and \ref{th_time_sharp2}($3^\circ$) show that, in the general case, estimates \eqref{4.20} and \eqref{4.21} are sharp regarding the norm type and regarding the dependence on  $\tau$.  However,
statements $1^\circ$ and $2^\circ$ of Theorem \ref{thrm_53} can be improved under some additional assumptions. The following result is deduced from Theorem \ref{cos_thrm2_J}($2^\circ$).

\begin{theorem}
	\label{thrm_54}
Under the assumptions of Subsections \emph{\ref{sec4.1}, \ref{sec4.2}}, suppose in addition that 
${\boldsymbol \phi}=0$. 
Suppose that Condition \emph{\ref{cond1}} or Condition \emph{\ref{cond2}} is satisfied.

\noindent $1^\circ$.
If ${\mathbf f}  \in H^{5/2}({\mathbb R}^3;{\mathbb C}^3)$, then for $\tau \in {\mathbb R}$ and $0< \varepsilon\le 1$ we have the following approximations for the fields  ${\mathbf v}_\varepsilon$ and ${\mathbf z}_\varepsilon$ in the energy norm\emph{:}
\begin{align*}
\bigl\| {\mathbf v}_\varepsilon (\cdot, \tau) - {\mathbf v}_0(\cdot, \tau) - \varepsilon \mu_0^{-1} \Psi^\varepsilon
 \operatorname{curl} {\mathbf v}_0(\cdot, \tau) \bigr\|_{H^1({\mathbb R}^3)}
\le {\mathfrak C}_{17} (1+ |\tau|)^{1/2} \varepsilon \| {\mathbf f} \|_{H^{5/2}({\mathbb R}^3)},
\\
\bigl\| {\mathbf z}_\varepsilon (\cdot, \tau) - {\mathbf z}_0(\cdot, \tau) - \varepsilon \Psi^\varepsilon
 \operatorname{curl} {\mathbf v}_0(\cdot, \tau) \bigr\|_{H^1({\mathbb R}^3)}
\le {\mathfrak C}_{18} (1+ |\tau|)^{1/2} \varepsilon \| {\mathbf f} \|_{H^{5/2}({\mathbb R}^3)}.
\\
\bigl\| (\eta^\varepsilon)^{-1} \operatorname{curl} {\mathbf v}_\varepsilon (\cdot, \tau) - ((\eta^0)^{-1} + \Sigma^\varepsilon) \operatorname{curl} {\mathbf v}_0(\cdot, \tau) \bigr\|_{L_2({\mathbb R}^3)}
\le {\mathfrak C}_{19} (1+ |\tau|)^{1/2} \varepsilon \| {\mathbf f} \|_{H^{5/2}({\mathbb R}^3)}.
\end{align*}
If ${\mathbf f}  \in H^{5/2}({\mathbb R}^3;{\mathbb C}^3)$, then for $\tau \in {\mathbb R}$ and $0< \varepsilon\le 1$ we have the following approximations for the fields ${\mathbf u}_\varepsilon$ and ${\mathbf w}_\varepsilon$ in $L_2$\emph{:}
\begin{align*}
\bigl\| \bigl({\mathbf u}_\varepsilon(\cdot, \tau) - {\mathbf u}_\varepsilon(\cdot,0)\bigr) - 
\bigl({\mathbf 1} + \Sigma_\circ^\varepsilon \bigr) \bigl({\mathbf u}_0(\cdot, \tau) - {\mathbf u}_0(\cdot, 0) \bigr) \bigr\|_{L_2({\mathbb R}^3)}
\le {\mathfrak C}_{19} |\tau|(1+ |\tau|)^{1/2} \varepsilon \| {\mathbf f} \|_{H^{5/2}({\mathbb R}^3)},
\\
\bigl\| \bigl( {\mathbf w}_\varepsilon(\cdot, \tau) - {\mathbf w}_\varepsilon(\cdot,0)\bigr) - 
\widetilde{\eta}^\varepsilon (\eta^0)^{-1} \bigl({\mathbf w}_0(\cdot, \tau) - {\mathbf w}_0(\cdot, 0) \bigr) \bigr\|_{L_2({\mathbb R}^3)}
\le {\mathfrak C}_{20} |\tau|(1+ |\tau|)^{1/2} \varepsilon \| {\mathbf f} \|_{H^{5/2}({\mathbb R}^3)}.
\end{align*}
Under Condition \emph{\ref{cond1}}, the constants $\mathfrak{C}_{17}, \mathfrak{C}_{18}, \mathfrak{C}_{19}, \mathfrak{C}_{20}$ depend on $|\mu_0|$, $|\mu_0^{-1}|$, $\|\eta\|_{L_\infty}$, $\|\eta^{-1}\|_{L_\infty}$, and the parameters of the lattice $\Gamma$. Under Condition \emph{\ref{cond2}}, these constants depend also on $c^\circ$.

\noindent $2^\circ$.
Let ${\mathbf f} \in H^{1+s}({\mathbb R}^3;{\mathbb C}^3)$, where $0\le s \le 3/2$.
Then for $\tau \in {\mathbb R}$ and $\varepsilon>0$ we have
\begin{align*}
\bigl\| {\mathbf D} \bigl( {\mathbf v}_\varepsilon(\cdot, \tau) - {\mathbf v}_0(\cdot, \tau) - \varepsilon \mu_0^{-1} \Psi^\varepsilon \Pi_\varepsilon \operatorname{curl} {\mathbf v}_0(\cdot, \tau) \bigr)
\bigr\|_{L_2({\mathbb R}^3)}
\le {\mathfrak C}_{21}(s) (1+ |\tau|)^{s/3} \varepsilon^{2s/3} \| {\mathbf f} \|_{H^{1+s}({\mathbb R}^3)},
\\
\bigl\| {\mathbf D} \bigl( {\mathbf z}_\varepsilon(\cdot, \tau) - {\mathbf z}_0(\cdot, \tau) - \varepsilon \Psi^\varepsilon \Pi_\varepsilon \operatorname{curl} {\mathbf v}_0(\cdot, \tau) \bigr)
\bigr\|_{L_2({\mathbb R}^3)}
\le {\mathfrak C}_{22}(s) (1+ |\tau|)^{s/3} \varepsilon^{2s/3} \| {\mathbf f} \|_{H^{1+s}({\mathbb R}^3)}.
\\
\bigl\| (\eta^\varepsilon)^{-1} \operatorname{curl} {\mathbf v}_\varepsilon(\cdot, \tau) - ( (\eta^0)^{-1} + \Sigma^\varepsilon \Pi_\varepsilon) \operatorname{curl} {\mathbf v}_0(\cdot, \tau) \bigr\|_{L_2({\mathbb R}^3)}
\le {\mathfrak C}_{23}(s) (1+ |\tau|)^{s/3} \varepsilon^{2s/3} \| {\mathbf f} \|_{H^{1+s}({\mathbb R}^3)}.
\end{align*}
If ${\mathbf f} \in H^{1+s}({\mathbb R}^3;{\mathbb C}^3)$, where $0\le s \le 3/2$, then for $\tau \in {\mathbb R}$ and $\varepsilon>0$ we have 
\begin{align*}
\begin{split}
\bigl\| \bigl({\mathbf u}_\varepsilon(\cdot, \tau) - {\mathbf u}_\varepsilon(\cdot,0)\bigr) - 
\bigl(I+ \Sigma_\circ^\varepsilon \Pi_\varepsilon \bigr) \bigl( {\mathbf u}_0(\cdot, \tau) - {\mathbf u}_0(\cdot, 0) \bigr) \bigr\|_{L_2({\mathbb R}^3)}
\\
\le {\mathfrak C}_{23}(s) |\tau|(1+ |\tau|)^{s/3} \varepsilon^{2s/3} \| {\mathbf f} \|_{H^{1+s}({\mathbb R}^3)},
\end{split}
\\
\begin{split}
\bigl\| \bigl({\mathbf w}_\varepsilon(\cdot, \tau) - {\mathbf w}_\varepsilon(\cdot,0)\bigr) - 
\bigl(I +(\widetilde{\eta}^\varepsilon  (\eta^0)^{-1} - {\mathbf 1})\Pi_\varepsilon  \bigr) \bigl( {\mathbf w}_0(\cdot, \tau) - {\mathbf w}_0(\cdot, 0) \bigr) \bigr\|_{L_2({\mathbb R}^3)}
\\
\le {\mathfrak C}_{24}(s) |\tau|(1+ |\tau|)^{s/3} \varepsilon^{2s/3} \| {\mathbf f} \|_{H^{1+s}({\mathbb R}^3)}.
\end{split}
\end{align*}
Under Condition \emph{\ref{cond1}}, the constants $\mathfrak{C}_{21}(s), \mathfrak{C}_{22}(s), \mathfrak{C}_{23}(s), \mathfrak{C}_{24}(s)$ depend on  
$|\mu_0|$, $|\mu_0^{-1}|$, $\|\eta\|_{L_\infty}$, $\|\eta^{-1}\|_{L_\infty}$, the parameters of the  lattice $\Gamma$, and $s$. Under Condition \emph{\ref{cond2}}, these constants depend also on  $c^\circ$.
\end{theorem}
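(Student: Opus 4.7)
The plan is to follow line by line the template established in the proof of Theorem \ref{thrm_53}, replacing every invocation of Theorem \ref{cos_thrm2_J}$(1^\circ)$ by the sharper bounds provided by Theorem \ref{cos_thrm2_J}$(2^\circ)$, which are available precisely under Condition \ref{cond1} or Condition \ref{cond2}. Since $\boldsymbol{\phi}=0$, formula \eqref{51b} identifies ${\mathbf v}_\varepsilon(\cdot,\tau)$ with $\mu_0^{-1/2}\mathcal{L}_{J,\varepsilon}^{-1/2}\sin(\tau\mathcal{L}_{J,\varepsilon}^{1/2})\mu_0^{-1/2}\boldsymbol{\psi}$, and formula \eqref{54} identifies ${\mathbf v}_0(\cdot,\tau)$ with the analogous expression built from $\mathcal{L}_J^0$, where $\boldsymbol{\psi}=-\operatorname{curl}{\mathbf f}$. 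In particular, the assumption ${\mathbf f}\in H^{5/2}$ gives $\mu_0^{-1/2}\boldsymbol{\psi}\in J^{3/2}$ (divergence-freeness with respect to $\mu_0$ is automatic, since $\boldsymbol{\psi}$ is a curl), which is exactly the regularity needed by \eqref{sin_thrm1_corr5} and \eqref{sin_thrm1_corr6}.

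For statement $1^\circ$, I apply \eqref{sin_thrm1_corr5} to the vector $\mu_0^{-1/2}\boldsymbol{\psi}$ and multiply the result on the left by $\mu_0^{-1/2}$. The key algebraic simplification is that the composition $\mu_0^{-1/2}\bigl(\varepsilon\mu_0^{-1/2}\Psi^\varepsilon\operatorname{curl}\mu_0^{-1/2}\bigr)$ acting on $(\mathcal{L}_J^0)^{-1/2}\sin(\tau(\mathcal{L}_J^0)^{1/2})\mu_0^{-1/2}\boldsymbol{\psi}=\mu_0^{1/2}{\mathbf v}_0$ collapses to the expected corrector $\varepsilon\mu_0^{-1}\Psi^\varepsilon\operatorname{curl}{\mathbf v}_0$, because $\operatorname{curl}\mu_0^{-1/2}\bigl[\mu_0^{1/2}{\mathbf v}_0\bigr]=\operatorname{curl}{\mathbf v}_0$. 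This delivers the first displayed inequality of $1^\circ$; the bound for ${\mathbf z}_\varepsilon$ follows at once from ${\mathbf z}_\varepsilon=\mu_0{\mathbf v}_\varepsilon$ and ${\mathbf z}_0=\mu_0{\mathbf v}_0$, and the flux estimate comes from \eqref{sin_thrm1_corr6} applied in the same manner. Integrating the flux inequality in $\tau$ and using \eqref{51a}, \eqref{53a} together with the identity $\Sigma({\mathbf x})\eta^0=\Sigma_\circ({\mathbf x})$ yields the approximation for ${\mathbf u}_\varepsilon-{\mathbf u}_\varepsilon(\cdot,0)$ (and so, via ${\mathbf w}_\varepsilon=\eta^\varepsilon{\mathbf u}_\varepsilon$, also for ${\mathbf w}_\varepsilon-{\mathbf w}_\varepsilon(\cdot,0)$), where the extra factor $|\tau|$ on the right-hand side is the obvious cost of the time integration.

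Statement $2^\circ$ is obtained in exactly the same way, using instead the interpolated estimates \eqref{sin_thrm1_corr7} and \eqref{sin_thrm1_corr8} with the smoothing operator $\Pi_\varepsilon$ in the corrector: the data $\mu_0^{-1/2}\boldsymbol{\psi}$ now lies in $J^s$ provided ${\mathbf f}\in H^{1+s}$, and the rates $(1+|\tau|)^{s/3}\varepsilon^{2s/3}$ transfer directly to the physical fields. The constants inherit their dependence from Theorem \ref{cos_thrm2_J}$(2^\circ)$, including the dependence on $c^\circ$ under Condition \ref{cond2}.

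There is no genuine obstacle in this proof: the only point requiring attention is the bookkeeping of the constant factors $\mu_0^{\pm 1/2}$, $\eta^0$ and the verification that each operator-level corrector collapses to the field-level corrector advertised in the statement (which is precisely the calculation already carried out in the proof of Theorem \ref{thrm_53}). Everything else is a direct substitution of the improved bounds from Theorem \ref{cos_thrm2_J}$(2^\circ)$ into the arguments already recorded.
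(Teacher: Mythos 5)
Your proposal is correct and follows the same route the paper takes. The paper states only that Theorem \ref{thrm_54} is deduced from Theorem \ref{cos_thrm2_J}$(2^\circ)$, and the implicit argument is precisely the one you spell out: repeat the proof of Theorem \ref{thrm_53}, with the operator-level estimates \eqref{sin_thrm1_corr1}--\eqref{sin_thrm1_corr4} replaced by \eqref{sin_thrm1_corr5}--\eqref{sin_thrm1_corr8}, substitute $\boldsymbol{\psi}=-\operatorname{curl}{\mathbf f}$ into the representations \eqref{51b}, \eqref{54}, simplify the corrector via $\operatorname{curl}\mu_0^{-1/2}\mu_0^{1/2}=\operatorname{curl}$ and $\Sigma\eta^0=\Sigma_\circ$, and integrate the flux estimate in $\tau$ for the ${\mathbf u}_\varepsilon$, ${\mathbf w}_\varepsilon$ bounds.
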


\begin{remark}
$1^\circ$. In the estimates from Theorems \emph{\ref{thrm_51}}, \emph{\ref{thrm_52}}, \emph{\ref{thrm_53}}, \emph{\ref{thrm_54}},  the norm $\| {\mathbf f} \|_{H^s}$ can be replaced by $\| \operatorname{curl} {\mathbf f} \|_{H^{s-1}}$,
because these theorems are deduced from the results for problem \eqref{51} with the initial data  
 $\boldsymbol{\psi} =- \operatorname{curl} {\mathbf f}$.
 
  $2^\circ$. Tracking the dependence of estimates on  $\tau$ allows us to get qualified estimates for small  $\varepsilon$ and large $\tau$. Under the assumptions of Theorem \emph{\ref{thrm_51}($1^\circ$)} we have
 $$
\begin{aligned}
 &\| {\mathbf v}_\varepsilon(\cdot, \tau) - {\mathbf v}_0(\cdot, \tau) \|_{L_2} = O(\varepsilon^{1-\alpha}), 
 \\
& \| {\mathbf z}_\varepsilon(\cdot, \tau) - {\mathbf z}_0(\cdot, \tau) \|_{L_2} = O(\varepsilon^{1-\alpha}),
 \\
&\qquad \text{for}\  \tau = O(\varepsilon^{-\alpha}),\ 0< \alpha <1.
 \end{aligned}
 $$
 Under the assumptions of Theorem \emph{\ref{thrm_51}($2^\circ$)} we have 
 $$
 \begin{aligned}
 &\| {\mathbf v}_\varepsilon(\cdot, \tau) - {\mathbf v}_0(\cdot, \tau) \|_{L_2} = O(\varepsilon^{(1-\alpha)s/2}),
 \\ 
&\| {\mathbf z}_\varepsilon(\cdot, \tau) - {\mathbf z}_0(\cdot, \tau) \|_{L_2} = O(\varepsilon^{(1-\alpha)s/2}),
\\
&\qquad \text{for}\  \tau = O(\varepsilon^{-\alpha}),\ 0< \alpha <1.
\end{aligned}
 $$
 Under the assumptions of Theorem \emph{\ref{thrm_52}($1^\circ$)} we have 
 $$
\begin{aligned}
 &\| {\mathbf v}_\varepsilon(\cdot, \tau) - {\mathbf v}_0(\cdot, \tau) \|_{L_2} = O(\varepsilon^{1-\alpha/2}), 
 \\
 &\| {\mathbf z}_\varepsilon(\cdot, \tau) - {\mathbf z}_0(\cdot, \tau) \|_{L_2} = O(\varepsilon^{1-\alpha/2}),
 \\
&\qquad \text{for}\  \tau = O(\varepsilon^{-\alpha}),\ 0< \alpha <2.
 \end{aligned}
 $$
 Under the assumptions of Theorem \emph{\ref{thrm_52}($2^\circ$)} we have 
 $$
 \begin{aligned}
 &\| {\mathbf v}_\varepsilon(\cdot, \tau) - {\mathbf v}_0(\cdot, \tau) \|_{L_2} = O(\varepsilon^{(2-\alpha) s/3}), 
 \\ 
&\| {\mathbf z}_\varepsilon(\cdot, \tau) - {\mathbf z}_0(\cdot, \tau) \|_{L_2} = O(\varepsilon^{(2-\alpha)s/3}),
\\
&\qquad \text{for}\  \tau = O(\varepsilon^{-\alpha}),\ 0< \alpha <2.
\end{aligned}
 $$
 Under the assumptions of Theorem \emph{\ref{thrm_53}($1^\circ$)} for ${\mathbf v}_\varepsilon$ and ${\mathbf z}_\varepsilon$ we have $$
\begin{aligned}
&\bigl\| {\mathbf v}_\varepsilon(\cdot, \tau) - {\mathbf v}_0(\cdot, \tau) - \varepsilon \mu_0^{-1} \Psi^\varepsilon
 \operatorname{curl}  {\mathbf v}_0(\cdot, \tau)
\bigr\|_{H^1({\mathbb R}^3)} 
= O(\varepsilon^{1- \alpha}),
\\
& \bigl\| {\mathbf z}_\varepsilon(\cdot, \tau) - {\mathbf z}_0(\cdot, \tau) - \varepsilon \Psi^\varepsilon 
\operatorname{curl} {\mathbf v}_0(\cdot, \tau) \bigr\|_{H^1({\mathbb R}^3)}
= O(\varepsilon^{1- \alpha}), 
\\
&\| (\eta^\varepsilon)^{-1} \operatorname{curl} {\mathbf v}_\varepsilon(\cdot, \tau) - ((\eta^0)^{-1} + \Sigma^\varepsilon) \operatorname{curl} {\mathbf v}_0(\cdot, \tau)\|_{L_2({\mathbb R}^3)}=
O(\varepsilon^{1- \alpha}),
\\
&\qquad \text{for}\  \tau = O(\varepsilon^{-\alpha}),\ 0< \alpha <1.
\end{aligned}
$$
Under the assumptions of Theorem \emph{\ref{thrm_53}($1^\circ$)} for ${\mathbf u}_\varepsilon$ and 
${\mathbf w}_\varepsilon$ we have 
$$
\begin{aligned}
& \bigl\| \bigl({\mathbf u}_\varepsilon(\cdot, \tau) - {\mathbf u}_\varepsilon (\cdot,0)\bigr) - 
\bigl({\mathbf 1} + \Sigma_\circ^\varepsilon  \bigr) \bigl({\mathbf u}_0(\cdot, \tau) - {\mathbf u}_0(\cdot, 0) \bigr) \bigr\|_{L_2({\mathbb R}^3)}
= O(\varepsilon^{1- 2 \alpha}), 
\\
& \bigl\| \bigl( {\mathbf w}_\varepsilon (\cdot, \tau) - {\mathbf w}_\varepsilon (\cdot,0)\bigr) - 
\widetilde{\eta}^\varepsilon  (\eta^0)^{-1} \bigl( {\mathbf w}_0(\cdot, \tau) - {\mathbf w}_0(\cdot, 0) \bigr) \bigr\|_{L_2({\mathbb R}^3)}
= O(\varepsilon^{1- 2 \alpha}), 
\\ 
&\qquad \text{for}\  \tau = O(\varepsilon^{-\alpha}),\ 0< \alpha <1/2.
\end{aligned}
$$
Under the assumptions of Theorem \emph{\ref{thrm_53}($2^\circ$)} for ${\mathbf v}_\varepsilon$ and ${\mathbf z}_\varepsilon$ we have
$$
\begin{aligned}
& \bigl\| {\mathbf D} \bigl( {\mathbf v}_\varepsilon(\cdot, \tau) - {\mathbf v}_0(\cdot, \tau) - \varepsilon \mu_0^{-1} \Psi^\varepsilon \Pi_\varepsilon \operatorname{curl} {\mathbf v}_0(\cdot, \tau) \bigr)\bigr\|_{L_2({\mathbb R}^3)}  = O(\varepsilon^{(1- \alpha)s/2}), 
\\
& \bigl\| {\mathbf D} \bigl( {\mathbf z}_\varepsilon(\cdot, \tau) - {\mathbf z}_0(\cdot, \tau) - \varepsilon \Psi^\varepsilon \Pi_\varepsilon  \operatorname{curl} {\mathbf v}_0(\cdot, \tau) \bigr) 
\bigr\|_{L_2({\mathbb R}^3)}
= O(\varepsilon^{(1- \alpha)s/2}),
\\
& \bigl\| (\eta^\varepsilon)^{-1} \operatorname{curl} {\mathbf v}_\varepsilon(\cdot, \tau) - ((\eta^0)^{-1} + \Sigma^\varepsilon  \Pi_\varepsilon) \operatorname{curl} {\mathbf v}_0(\cdot, \tau)
\bigr\|_{L_2({\mathbb R}^3)}=
O(\varepsilon^{(1- \alpha)s/2}), 
\\
&\qquad \text{for}\  \tau = O(\varepsilon^{-\alpha}),\ 0< \alpha <1.
\end{aligned}
$$
Under the assumptions of Theorem \emph{\ref{thrm_53}($2^\circ$)} for ${\mathbf u}_\varepsilon$ and 
${\mathbf w}_\varepsilon$ we have
$$
\begin{aligned}
& \bigl\| \bigl({\mathbf u}_\varepsilon(\cdot, \tau) - {\mathbf u}_\varepsilon(\cdot,0)\bigr) - 
\bigl({\mathbf 1} + \Sigma_\circ^\varepsilon \bigr) \bigl({\mathbf u}_0(\cdot, \tau) - {\mathbf u}_0(\cdot, 0) \bigr) \bigr\|_{L_2({\mathbb R}^3)}
= O(\varepsilon^{(1-  \alpha)s/2 - \alpha}), 
\\
& \bigl\| \bigl( {\mathbf w}_\varepsilon(\cdot, \tau) - {\mathbf w}_\varepsilon(\cdot,0)\bigr) - 
\widetilde{\eta}^\varepsilon (\eta^0)^{-1} \bigl( {\mathbf w}_0(\cdot, \tau) - {\mathbf w}_0(\cdot, 0) \bigr) \bigr\|_{L_2({\mathbb R}^3)}
= O(\varepsilon^{(1- \alpha)s/2 - \alpha}), 
\\ 
&\qquad \text{for}\  \tau = O(\varepsilon^{-\alpha}),\ 0< \alpha <\frac{s}{s+2}.
\end{aligned}
$$
Under the assumptions of Theorem \emph{\ref{thrm_54}}$(1^\circ)$ for ${\mathbf v}_\varepsilon$ and ${\mathbf z}_\varepsilon$ we have
$$
\begin{aligned}
& \bigl\| {\mathbf D} \bigl( {\mathbf v}_\varepsilon (\cdot, \tau) - {\mathbf v}_0(\cdot, \tau) - \varepsilon \mu_0^{-1} \Psi^\varepsilon \Pi_\varepsilon \operatorname{curl} {\mathbf v}_0(\cdot, \tau) \bigr)\bigr\|_{L_2({\mathbb R}^3)}  = O( \varepsilon^{1- \alpha/2}), 
\\
&\bigl\| {\mathbf D} \bigl( {\mathbf z}_\varepsilon(\cdot, \tau) - {\mathbf z}_0(\cdot, \tau) - \varepsilon \Psi^\varepsilon \Pi_\varepsilon \operatorname{curl} {\mathbf v}_0(\cdot, \tau) \bigr) \bigr\|_{L_2({\mathbb R}^3)}
= O(\varepsilon^{1- \alpha/2}),
\\
&\bigl\| (\eta^\varepsilon)^{-1} \operatorname{curl} {\mathbf v}_\varepsilon(\cdot, \tau) - ((\eta^0)^{-1} + \Sigma^\varepsilon \Pi_\varepsilon) \operatorname{curl} {\mathbf v}_0(\cdot, \tau)\bigr\|_{L_2({\mathbb R}^3)}=
O(\varepsilon^{1- \alpha /2}), 
\\
&\qquad \text{for}\  \tau = O(\varepsilon^{-\alpha}),\ 0< \alpha <2.
\end{aligned}
$$
Under the assumptions of Theorem \emph{\ref{thrm_54}($1^\circ$)}  for ${\mathbf u}_\varepsilon$ and 
${\mathbf w}_\varepsilon$ we have
$$
\begin{aligned}
&\bigl\| \bigl({\mathbf u}_\varepsilon(\cdot, \tau) - {\mathbf u}_\varepsilon(\cdot,0)\bigr) - 
\bigl({\mathbf 1} + \Sigma_\circ^\varepsilon \bigr) \bigl({\mathbf u}_0(\cdot, \tau) - {\mathbf u}_0(\cdot, 0) \bigr) \bigr\|_{L_2({\mathbb R}^3)}
= O(\varepsilon^{1- 3 \alpha/2}), 
\\
& \bigl\| \bigl( {\mathbf w}_\varepsilon(\cdot, \tau) - {\mathbf w}_\varepsilon(\cdot,0)\bigr) - 
\widetilde{\eta}^\varepsilon (\eta^0)^{-1} \bigl( {\mathbf w}_0(\cdot, \tau) - {\mathbf w}_0(\cdot, 0) \bigr) \bigr\|_{L_2({\mathbb R}^3)}
= O(\varepsilon^{1- 3 \alpha/2}), 
\\ 
&\qquad \text{for}\  \tau = O(\varepsilon^{-\alpha}),\ 0< \alpha <2/3.
\end{aligned}
$$
Under the assumptions of Theorem \emph{\ref{thrm_54}($2^\circ$)} for  ${\mathbf v}_\varepsilon$ and ${\mathbf z}_\varepsilon$ we have
$$
\begin{aligned}
& \bigl\| {\mathbf D} \bigl( {\mathbf v}_\varepsilon(\cdot, \tau) - {\mathbf v}_0(\cdot, \tau) - \varepsilon \mu_0^{-1} \Psi^\varepsilon \Pi_\varepsilon \operatorname{curl} {\mathbf v}_0(\cdot, \tau) \bigr)\bigr\|_{L_2({\mathbb R}^3)}  = O(\varepsilon^{(2- \alpha)s/3}), 
\\
& \bigl\| {\mathbf D} \bigl( {\mathbf z}_\varepsilon(\cdot, \tau) - {\mathbf z}_0(\cdot, \tau) -\varepsilon \Psi^\varepsilon \Pi_\varepsilon \operatorname{curl} {\mathbf v}_0(\cdot, \tau) \bigr)
\bigr\|_{L_2({\mathbb R}^3)}
= O(\varepsilon^{(2- \alpha)s/3}),
\\
& \bigl\| (\eta^\varepsilon)^{-1} \operatorname{curl} {\mathbf v}_\varepsilon(\cdot, \tau) - ((\eta^0)^{-1} + \Sigma^\varepsilon \Pi_\varepsilon) \operatorname{curl} {\mathbf v}_0(\cdot, \tau) \bigr\|_{L_2({\mathbb R}^3)}=
O(\varepsilon^{(2- \alpha)s/3}), 
\\
&\qquad \text{for}\  \tau = O(\varepsilon^{-\alpha}),\ 0< \alpha <2.
\end{aligned}
$$
Under the assumptions of Theorem \emph{\ref{thrm_54}($2^\circ$)} for ${\mathbf u}_\varepsilon$ and 
${\mathbf w}_\varepsilon$ we have 
$$
\begin{aligned}
&\bigl\| \bigl({\mathbf u}_\varepsilon(\cdot, \tau) - {\mathbf u}_\varepsilon(\cdot,0)\bigr) - 
\bigl( {\mathbf 1} + \Sigma_\circ^\varepsilon \bigr) \bigl({\mathbf u}_0(\cdot, \tau) - {\mathbf u}_0(\cdot, 0) \bigr) \bigr\|_{L_2({\mathbb R}^3)}
= O(\varepsilon^{(2-  \alpha)s/3 - \alpha}), 
\\
& \bigl\| \bigl({\mathbf w}_\varepsilon(\cdot, \tau) - {\mathbf w}_\varepsilon(\cdot,0)\bigr) - 
\widetilde{\eta}^\varepsilon (\eta^0)^{-1} \bigl({\mathbf w}_0(\cdot, \tau) - {\mathbf w}_0(\cdot, 0) \bigr) \bigr\|_{L_2({\mathbb R}^3)}
= O(\varepsilon^{(2- \alpha)s/3 - \alpha}), 
\\ 
&\qquad \text{for}\  \tau = O(\varepsilon^{-\alpha}),\ 0< \alpha <\frac{2s}{s+3}.
\end{aligned}
$$
\end{remark}

\end{document}